\tikzset{->-/.style={decoration={markings,mark=at position #1 with {\arrow{>}}},postaction={decorate}}}
\definecolor{red}{rgb}{1,0,0} 
 \definecolor{darkgreen}{rgb}{0, .7, 0}
 \definecolor{purple}{rgb}{.7, 0, 1}
\tikzset{mynode/.style={draw,circle,fill=black,inner sep=2pt,outer sep=0.5pt}}
\newtheorem{theorem}{Theorem}[section]
\newtheorem*{theorem*}{Theorem}
\newtheorem*{lemma*}{Lemma}
\newtheorem{proposition}[theorem]{Proposition}
\newtheorem{lemma}[theorem]{Lemma}
\newtheorem{corollary}[theorem]{Corollary}
\theoremstyle{definition}
\theoremstyle{remark}
\begin{document}
\title{Pro-$p$ completions of $PD_n$-groups}
\author{Jonathan Hillman, Dessislava H. Kochloukova}

\address{School of Mathematics and Statistics,
University of Sydney,
Sydney, NSW,
Australia}
\address
{Department of Mathematics, State University of Campinas (UNICAMP), 13083-859, Campinas, SP, Brazil }
\email{jonathanhillman47@gmail.com}
\email{desi@ime.unicamp.br}

\keywords{pro-$p$ completions of $PD_n$-groups, pro-$p$ $PD_n$-groups, $p$-good groups}

\begin{abstract} 
We sharpen earlier work on the pro-$p$ completions of orientable 
$PD_3$-groups.
There are four cases, 
and we give examples of aspherical 3-manifolds representing each case.
In  three of the four cases the new results are best possible.
We also consider the pro-$p$ completion of some orientable $PD_n$ groups for $n \leq 5$,
 including surface-by-surface groups.

\end{abstract}

\maketitle

\section{Introduction}

There are two definitions of a profinite Poincaré duality group $G$ 
of dimension $n$ at a prime $p$ \cite[3.4.6]{book1}, \cite{W-S}. 
Both definitions differ on whether the profinite group $G$ should be
of type $FP_{\infty}$ over $\mathbb{Z}_p$ i.e. whether the trivial $\mathbb{Z}_p[[G]]$-module $\mathbb{Z}_p$ has a projective resolution with 
all projectives finitely generated $\mathbb{Z}_p[[G]]$-modules. 
The groups that satisfy the definition of \cite{W-S} we call strong profinite $PD_n$ groups at $p$ and the groups that satisfy the original definition of Tate from \cite{book1}, \cite{Serre} we call profinite $PD_n$ groups at $p$. 
By \cite{W-S} every strong profinite $PD_n$ group at $p$ is a profinite $PD_n$ group at $p$. 
By definition a group $G$ which is a strong $PD_n$ group at $p$ has cohomological $p$-dimension $cd_p(G) = n$, has type $FP_{\infty}$ over 
$\mathbb{Z}_p$  and $Ext^i_{\mathbb{Z}_p[[G]]}(\mathbb{Z}_p, \mathbb{Z}_p[[G]]) = 0$ for $i \not= n$ and 
$Ext^n_{\mathbb{Z}_p[[G]]}(G, \mathbb{Z}_p[[G]]) \simeq \mathbb{Z}_p$. 
If the action of $G$ on  $Ext^n_{\mathbb{Z}_p[[G]]}(G, \mathbb{Z}_p[[G]]) \simeq \mathbb{Z}_p$ is trivial $G$ is called orientable.
For pro-$p$ groups the notions of strong profinite $PD_n$ group at $p$ and  profinite $PD_n$ group at $p$ coincide. 
We call such groups pro-$p$ $PD_n$ groups.

We are interested in pro-$p$ and profinite completions of orientable $PD_n$ groups.
The cases $n=1$ or 2 are well understood.
The pro-$p$ completions of orientable $PD_2$-groups are
pro-$p$ $PD_2$-groups.
These are also known as Demu\v skin groups,
and were completely classified in terms of pro-$p$ generators and relations in \cite{D1}, \cite{D2}, \cite{Labute}, \cite{S}.
(Not all such groups are pro-$p$ completions of $PD_2$-groups.)
Profinite and pro-$p$ completions of $PD_3$ groups were studied by Kochloukova and Zalesski in \cite{Desi-Pavel} and by Weigel in \cite{Wilkes}. Some results on pro-$p$ completions for arbitrary $n$ were obtained by Hillman, Kochloukova and Lima in \cite{H-K-L}. 
The notion of orientable profinite Poincaré duality pairs (over $\mathbb{F}_p$) was first suggested by Kochloukova in \cite{Koch} and a more general notion of (in general non-orientable) profinite Poincaré duality pairs was developed by Wilkes in \cite{Wilkes}. 

Sections 2 and 3 contain some basic definitions, 
lemmas and results from earlier work.
In Section \ref{proPD3} we build upon the results of \cite{Desi-Pavel},
and prove the following theorem.

\medskip
{\bf Theorem A.} {\it 
 Let $G$ be an orientable Poincaré duality group of dimension 3 and let $\widehat{G}_p$ be the pro-$p$ completion of $G$. 
Then exactly one of the following conditions holds:

a) $\widehat{G}_p$ is cyclic or quaternionic;

b)  $\widehat{G}_p$ is an orientable pro-$p$ $PD_3$-group;

c)  there is no upper bound on the deficiency of the subgroups of finite index in $\widehat{G}_p$; 

d) $\widehat{G}_p$ is  $\mathbb{Z}_p$ or $\widehat{D_\infty}_2$.}

The statements of cases (a) and (d)  sharpen the corresponding statements
in  \cite[Thm B]{Desi-Pavel}.
We also give simple criteria for when they arise.
Here our results are essentially complete (except for  $p=2$).
Several equivalent criteria for case (b) were given in \cite[Thm A]{Desi-Pavel}.
We augment these criteria, 
as a corollary of Theorem \ref{cdleq2 implies free}.
This theorem also implies that $\widehat{G}_p$ cannot
have cohomological $p$-dimension 2.
However it is not yet clear what else might occur,
and we do not have simple criteria for recognizing case (c).

In \S\ref{examples} we give examples of geometric flavour for each 
of the four cases listed above.
We give an example of case (c) in which $\widehat{G}_p$ 
is a free pro-$p$ group of rank 2,
for all primes $p$. 
(We do not know whether there are examples of case (c)
in which $\widehat{G}_p$ has cohomological $p$-dimension
at least 3.)

In \cite{Serre} Serre called an abstract group $G$ good if for every finite 
$G$-module $M$ the map $H^i (\widehat{G}, M) \to H^i(G, M)$, 
induced by the canonical map $G \to \widehat{G}$, is an isomorphism,
 where $\widehat{G}$ denotes the profinite completion of $G$.
The group $B$ is called $p$-good if for every finite pro-$p$ 
$\mathbb{Z}_p[[\widehat{B}_p]]$-module $M$ we have that 
the canonical map $B \to \widehat{B}_p$ induces an isomorphism 
$H^i(\widehat{B}_p, M) \to H^i(B, M)$, 
where  $\widehat{B}_p$ is the pro-$p$ completion of $B$.  
Groups that are  $p$-good were previously studied in \cite{Koch2}, \cite{Koch}, \cite{Desi-Pavel}. 
In \cite{G-J-P-Z} the term $p$-good group was used with a different 
(but related)  meaning.

For a set $\mathcal{T}$ of  normal subgroups of $p$-power index in a discrete group $B$ we say that $\mathcal{T}$ is directed if for every $U_1, U_2 \in \mathcal{T}$ we have that there is  
$U \subseteq U_1 \cap U_2$ such that $U \in \mathcal{T}$.
In the next section we give criteria for groups of type $FP_\infty$
and with additional structure to be good, or $p$-good,
and we prove the following theorem.

\medskip
{\bf Theorem B.} {\it Let $1 \to A \to B \to C \to 1$ be a short exact sequence 
of groups such that $A$ is an orientable surface group and $C$ 
is an orientable $PD_s$ group, where $s = 2$ or $s = 3$.  
Let $\mathcal{T}$ be a directed set of  normal subgroups of $p$-power index 
in $B$ that defines the pro-$p$ topology of $B$. 
Suppose that 
$ \underset{ U \in {\mathcal T}}{\varprojlim} H_1(U \cap A, \mathbb{F}_p) = 0$ and furthermore if $s = 3$, there is an upper bound on the deficiency of the subgroups of finite index in $\widehat{C}_p$. 
Then 
    
a) if $s = 2$, then  $\widehat{B}_p$ is a pro-$p$ $PD_4$-group 
and $B$ is $p$-good. 
    
b) if $s = 3$,  then $\widehat{B}_p$ is virtually a pro-$p$ $PD_k$-group 
for some $k \in \{ 2,3, 5\}$. If $k = 5$ then $B$ and $C$ are $p$-good.
    
If additionally $B$ is orientable and $p$-good then $\widehat{B}_p$ is an orientable pro-$p$ $PD_{2+ s}$-group.}

\medskip
The case of profinite completions of orientable $PD_4$-groups is easier 
and is considered in Proposition \ref{profinite}.

It is an open problem whether there is an orientable $PD_n$-group $G$ such that $\widehat{G}_p$ is an orientable pro-$p$ $PD_n$-group
and $G$ is not $p$-good.

The main result of Section \ref{PD4-0} is the following theorem.

\medskip
{\bf Theorem C.} {\it Let $1 \to A \to B \to C \to 1$ be a short exact sequence of groups such that $A \simeq \mathbb{Z}^2$, 
$B$ is an orientable $PD_4$ group and $C$ is an orientable surface group.
Then one of the following holds:

a) $\widehat{B}_p$ is an orientable pro-$p$ $PD_4$-group and $B$ is 
$p$-good;

b) $\widehat{B}_p$ is an orientable pro-$p$ $PD_2$ group and the image of $A$ in $\widehat{B}_p$ is trivial.
}

\medskip
{\it Remark.} If $B$ is not orientable, $p \not= 2$, then there is a third option for the closure $\overline{A}$ of the image of $A$ in $\widehat{B}_p$ to be virtually $\mathbb{Z}_p$. 

We show also that if $B$ is an orientable $PD_4$ group and $\chi(B)=0$ 
then $\widehat{B}_p$ cannot be a pro-$p$ $PD_3$-group,
and we give examples of orientable
$PD_4$-groups which are fundamental groups of bundles with 
base and fibre aspherical closed surfaces, 
and for which the projection to the base induces an isomorphism 
on pro-$p$ completions, for all primes $p$.

In \cite{H-K-L} it was shown that under some conditions the pro-$p$ 
completion of an orientable $PD_n$ group is virtually a pro-$p$ 
$PD_r$-group,  for $r\leq n, r\not= n-1$.
In the final Section \ref{more} we give an example of an aspherical 
5-manifold with perfect fundamental group,
which completes the discussion of examples 
with ``dimension drop" $n-r\not=1$ in \cite{H-K-L}.
We do not know of any orientable $PD_n$-group $G$ whose 
pro-$p$ completion $\widehat{G}_p$ is virtually a pro-$p$ $PD_{n-1}$-group. 
Note that if $\widehat{G}_p$ is virtually a pro-$p$ $PD_{n-1}$-group
then $G$ has a subgroup of $p$-power index $H$ 
such that $\widehat{H}_p$ is a pro-$p$ $PD_{n-1}$-group and by Theorem \ref{cdleq2 implies free} this is impossible for $n = 3$.

{\bf Acknowledgments} The second named author was partially supported by Bolsa de produtividade em pesquisa CNPq 305457/2021-7 and Projeto tem\'atico FAPESP 18/23690-6.

\section{Preliminaries} 

Let $G$ be a group, and let $\{\gamma_iG\}$ be its lower central series, 
with $\gamma_1G=G$ and $\gamma_{i+1}G=[G,\gamma_iG]$ for all $i\geq1$.
If $p$ is a prime, let $X^p(G)$ be the subgroup generated by all $p$th powers
of members of $G$.
Let $D_\infty=\mathbb{Z}\rtimes(\mathbb{Z}/2\mathbb{Z})$ be the infinite dihedral group.

Let $G$ be a profinite group. 
By definition $\mathbb{Z}_p[[G]] = {\varprojlim} \frac{\mathbb{Z}}{p^i \mathbb{Z}} [[G/ U]],$ where the inverse limit is over all $i \geq 1$ and $U$ open subgroups of $G$. And $\mathbb{F}_p[[G]] =\mathbb{Z}_p[[G]]/ p \mathbb{Z}_p[[G]] =  {\varprojlim} \mathbb{F}_p[[G/U]]$
where the inverse limit is over all open subgroups $U$ of $G$.

When $G$ is a group, $H_i(G, V )$ denotes the $i$th homology of $G$ in the respective category.
Thus if $G$ is an abstract group $V$ is a $\mathbb{Z} G$-module, 
if $G$ is a pro-$p$ group $V$ is a pro-$p$ $\mathbb{Z}_p[[G]]$-module 
and if $G$ is a profinite group $V$ is a  profinite 
$\widehat{\mathbb{Z}}[[G]]$-module. 
Furthermore $H^i(G, W )$ denotes the $i$th cohomology of $G$ in the respective category.
If $G$ is an abstract group $W$ is a $\mathbb{Z} G$-module, 
if $G$ is a pro-$p$ group or more generally a profinite group $W$ is a 
discrete $G$-module and so $W = \cup W^{U}$ where the union is 
over all open subgroups $U$ of $G$. 
In our applications $V$ and $W$ will be finite.

Since the pro-$P$ completions of $\mathbb{Z}$ and of surface groups ($PD_2$-groups) are well understood,
the first interesting case is in dimension 3.

\begin{theorem} \cite[Thm B]{Desi-Pavel}
 \label{PD3-} Let $G$ be an orientable Poincaré duality group of dimension 3 and let $\widehat{G}_p$ be the pro-$p$ completion of $G$. 
Then exactly one of the following conditions holds

a) $\widehat{G}_p$ is finite;

b) $\widehat{G}_p$  is an orientable pro-$p$ $PD_3$-group;

c)  there is no upper bound on the deficiency of the subgroups of finite index in $\widehat{G}_p$; 

d) $\widehat{G}_p$ is virtually $\mathbb{Z}_p$.
\end{theorem}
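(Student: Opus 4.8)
The plan is to separate the four possibilities according to whether the deficiencies of the finite-index (equivalently, open) subgroups of $\widehat{G}_p$ are bounded, and to handle the bounded case through the interplay of Poincaré duality with the structure theory of pro-$p$ groups. As preliminary bookkeeping, set $d=\dim_{\mathbb{F}_p}H^1(G;\mathbb{F}_p)$; orientable Poincaré duality over $\mathbb{F}_p$ together with universal coefficients gives $\dim_{\mathbb{F}_p}H^2(G;\mathbb{F}_p)=d$, $\dim_{\mathbb{F}_p}H^3(G;\mathbb{F}_p)=1$ and $H^i(G;\mathbb{F}_p)=0$ for $i>3$. The canonical map $G\to\widehat{G}_p$ is an isomorphism on $H^0$ and $H^1$ with $\mathbb{F}_p$-coefficients, so $\widehat{G}_p$ is topologically generated by $d$ elements; one checks that $\widehat{G}_p$ is a finitely presented pro-$p$ group, and every open subgroup $U\le\widehat{G}_p$ is the pro-$p$ completion of a finite-index subgroup $G_U\le G$, itself an orientable $PD_3$-group, so the same numerology applies to $U$.

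If $\widehat{G}_p$ is finite we are in case (a); if the deficiencies of the open subgroups of $\widehat{G}_p$ are unbounded we are in case (c). Assume then that $\widehat{G}_p$ is infinite with bounded deficiency over its open subgroups. Suppose first $\widehat{G}_p$ is torsion-free. I would begin by showing $cd_p(\widehat{G}_p)\le 3$; this is the first point where one genuinely uses that $G$ is a $PD_3$-group and not merely an $FP$-group of cohomological dimension $3$, since completion can raise the cohomological $p$-dimension in general. If $cd_p(\widehat{G}_p)\le 1$ then $\widehat{G}_p$ is free pro-$p$; a free pro-$p$ group of rank $\ge 2$ has open subgroups of unbounded rank, hence of unbounded deficiency, against our assumption, so the rank is $1$ and $\widehat{G}_p\cong\mathbb{Z}_p$, case (d). If $cd_p(\widehat{G}_p)=2$, write $r=\dim_{\mathbb{F}_p}H^2(\widehat{G}_p;\mathbb{F}_p)$, so $\chi(\widehat{G}_p)=1-d+r$ and $\chi(U)=[\widehat{G}_p:U]\,\chi(\widehat{G}_p)$ for open $U$; then $\chi(\widehat{G}_p)<0$ would again force unbounded deficiency, while the remaining case $\chi(\widehat{G}_p)=0$, in which $\widehat{G}_p$ is virtually $\mathbb{Z}_p^2$ or a rank-$2$ Demu\v{s}kin group, must be excluded by a separate argument invoking the $PD_3$-structure of $G$ (in fact, as shown later in Theorem~\ref{cdleq2 implies free}, $cd_p(\widehat{G}_p)=2$ does not occur at all). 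Hence $cd_p(\widehat{G}_p)=3$, and now one uses the duality of $G$, via the comparison maps $H^\bullet(\widehat{G}_p;\mathbb{F}_p)\to H^\bullet(G;\mathbb{F}_p)$ (isomorphisms in degrees $\le 1$), to show that $\widehat{G}_p$ has type $FP$ over $\mathbb{Z}_p$ and that $H^i(\widehat{G}_p;\mathbb{F}_p[[\widehat{G}_p]])$ vanishes for $i\neq 3$ and is $\mathbb{F}_p$ for $i=3$; for pro-$p$ groups this forces $\widehat{G}_p$ to be a pro-$p$ $PD_3$-group, orientable because the $G$-action on $H^3(G;\mathbb{Z})$ is trivial, which is case (b). Finally, if $\widehat{G}_p$ is infinite with torsion (so $cd_p(\widehat{G}_p)=\infty$), one passes to a torsion-free open subgroup $U$: it has bounded deficiency over its open subgroups and cannot be free of rank $\ge 2$ nor a pro-$p$ $PD_3$-group (either would leave $\widehat{G}_p$ outside cases (a)--(d)), so the torsion-free analysis gives $U\cong\mathbb{Z}_p$ and $\widehat{G}_p$ is virtually $\mathbb{Z}_p$, case (d). The four cases are pairwise disjoint: (a) is the finite case; (c) needs $\widehat{G}_p$ infinite with unbounded subgroup-deficiency; (b) needs $\widehat{G}_p$ torsion-free with $cd_p=3$; and (d) needs $\widehat{G}_p$ infinite and virtually $\mathbb{Z}_p$ — and no two of these can hold at once (a pro-$p$ $PD_3$-group has all open subgroups of deficiency $0$, and a torsion-free group that is virtually $\mathbb{Z}_p$ has $cd_p=1$).

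I expect the main obstacles to be threefold: bounding $cd_p(\widehat{G}_p)$ for torsion-free $\widehat{G}_p$; in the top-dimensional case, promoting the mod-$p$ coincidence of $G$ and $\widehat{G}_p$ to a genuine Poincaré-duality statement for the pro-$p$ group (essentially a $p$-goodness argument in the relevant degrees, and where orientability is transferred); and, most delicately, ruling out the boundary configurations ($cd_p(\widehat{G}_p)=2$ with $\chi=0$, and $\widehat{G}_p$ with torsion but ``large''), which amounts to showing that bounded deficiency among open subgroups leaves no room strictly between finiteness, being virtually $\mathbb{Z}_p$, and being a pro-$p$ $PD_3$-group. It is exactly this boundary that the present paper revisits and sharpens (Theorem~\ref{cdleq2 implies free} and Theorem A).
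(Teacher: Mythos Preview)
This theorem is not proved in the present paper; it is quoted from \cite[Thm~B]{Desi-Pavel} as a preliminary result, so there is no proof here to compare your proposal against. The only hint the paper gives about the original argument is the remark immediately following the statement: the proof in \cite{Desi-Pavel} is organised around the inverse limit $\varprojlim_{U\in\mathcal{T}} H_2(U,\mathbb{F}_p)$ over the normal subgroups $U$ of $p$-power index in $G$, with case (b) characterised by the vanishing of this limit and case (d) (once (a)--(c) are excluded) by its being $\mathbb{F}_p$. That is a rather different organising principle from your case analysis on $cd_p(\widehat{G}_p)$.

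Your sketch has genuine gaps beyond the three you flag. The step ``if $\widehat{G}_p$ is infinite with torsion, pass to a torsion-free open subgroup $U$'' is unjustified: a pro-$p$ group with torsion need not have any torsion-free open subgroup, and the existence of one is essentially equivalent to $vcd_p(\widehat{G}_p)<\infty$, which is part of what you are trying to prove. The bound $cd_p(\widehat{G}_p)\le 3$ in the torsion-free case is likewise asserted without argument and does not follow formally from $cd(G)=3$. Your treatment of $cd_p=2$ appeals forward to Theorem~\ref{cdleq2 implies free}; that is not circular (that theorem does not invoke the present one), but it was unavailable to \cite{Desi-Pavel}, so their proof must close this case by other means --- again, presumably through the inverse-limit machinery. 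Finally, the promotion of ``$cd_p=3$ with bounded deficiency'' to ``orientable pro-$p$ $PD_3$'' is the substantive core of the theorem, and your outline (comparison maps on $H^\bullet(-;\mathbb{F}_p)$ plus an $FP$-over-$\mathbb{Z}_p$ claim) is plausible in spirit but not yet an argument.
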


By the proof of Theorem \ref{PD3-} if $ \underset{ U \in {\mathcal T}}{\varprojlim} H_2(U, \mathbb{F}_p) = 0$ then case b) from Theorem \ref{PD3-} holds. Furthermore if  a), b), c) do not hold ( and so d) holds) then 
$ \underset{ U \in {\mathcal T}}{\varprojlim} H_2(U, \mathbb{F}_p)  \simeq \mathbb{F}_p$.

\begin{theorem}  \cite[Thm. 4]{Koch2} \label{PD-PD} Let $G$ be an abstract Poincaré duality group of dimension $m$ and let $\mathcal{C}$ be a directed set of normal subgroups of finite index in $G$. Suppose further that there is a subgroup $G_0$ of finite index in $G$ such that $G_0$ is orientable, that there is some $U_0 \in \mathcal{C}$ such that $U_0 \subseteq G_0$ and that, for all $i \geq 1$, 
 $$  \underset{ U \in {\mathcal C}}{\varprojlim} \  H_i(U, \mathbb{F}_p) = 0.$$

Then $\widehat{G}_{\mathcal{C}}$ is a strong profinite Poincaré duality group of dimension $m$ at $p$,  $\widehat{(G_0)}_{\mathcal{C}}$ is a strong profinite Poincaré duality group of dimension $m$ at $p$ and  $\chi_p(\widehat{G}_{\mathcal{C}}) = \chi(G)$.
\end{theorem}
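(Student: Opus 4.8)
The plan is to build an explicit finite resolution of $\mathbb{Z}_p$ over $\mathbb{Z}_p[[\widehat{G}_{\mathcal{C}}]]$ by base change from a resolution over $\mathbb{Z}G$, read off that $\widehat{G}_{\mathcal{C}}$ is of type $FP_\infty$ over $\mathbb{Z}_p$ with $cd_p(\widehat{G}_{\mathcal{C}})\leq m$, and then obtain Poincaré duality for $\widehat{G}_{\mathcal{C}}$ by dualizing this resolution and feeding in Poincaré duality for $G$. Since a $PD_m$ group is of type $FP$ over $\mathbb{Z}$, I would fix a resolution $P_\bullet\to\mathbb{Z}$ of length $m=cd(G)$ by finitely generated projective $\mathbb{Z}G$-modules, and set $\bar P_\bullet:=\mathbb{Z}_p[[\widehat{G}_{\mathcal{C}}]]\otimes_{\mathbb{Z}G}P_\bullet$, a complex of finitely generated projective $\mathbb{Z}_p[[\widehat{G}_{\mathcal{C}}]]$-modules.

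The first step is to show $\bar P_\bullet\to\mathbb{Z}_p$ is a resolution. Using $\mathbb{Z}/p^k[G/U]=\mathbb{Z}/p^k\otimes_{\mathbb{Z}U}\mathbb{Z}G$ and finite generation of the $P_i$, one identifies $\bar P_\bullet$ with $\varprojlim_{U,k}(\mathbb{Z}/p^k\otimes_{\mathbb{Z}U}P_\bullet)$, the limit over $U\in\mathcal{C}$ and $k\geq 1$. Each $\mathbb{Z}/p^k\otimes_{\mathbb{Z}U}P_i$ is a finite group, so the inverse system of homology groups $H_i(U,\mathbb{Z}/p^k)$ is Mittag--Leffler, the corresponding $\varprojlim^1$ vanishes, and $H_i(\bar P_\bullet)\cong\varprojlim_{U,k}H_i(U,\mathbb{Z}/p^k)$. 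The hypothesis $\varprojlim_{U\in\mathcal{C}}H_i(U,\mathbb{F}_p)=0$ for $i\geq1$ then bootstraps, along the short exact sequences $0\to\mathbb{Z}/p\to\mathbb{Z}/p^{k+1}\to\mathbb{Z}/p^k\to0$ and using exactness of $\varprojlim$ on Mittag--Leffler systems of finite groups, to $\varprojlim_{U,k}H_i(U,\mathbb{Z}/p^k)=0$ for $i\geq1$; for $i=0$ the limit is $\varprojlim_k\mathbb{Z}/p^k=\mathbb{Z}_p$. Hence $\bar P_\bullet\to\mathbb{Z}_p$ is a length-$m$ resolution by finitely generated projectives, so $\widehat{G}_{\mathcal{C}}$ is of type $FP_\infty$ over $\mathbb{Z}_p$ and $cd_p(\widehat{G}_{\mathcal{C}})\leq m$.

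For the duality, I would compute $Ext^i_{\mathbb{Z}_p[[\widehat{G}_{\mathcal{C}}]]}(\mathbb{Z}_p,\mathbb{Z}_p[[\widehat{G}_{\mathcal{C}}]])$ from $\bar P_\bullet$. The base-change adjunction identifies $\mathrm{Hom}_{\mathbb{Z}_p[[\widehat{G}_{\mathcal{C}}]]}\bigl(\mathbb{Z}_p[[\widehat{G}_{\mathcal{C}}]]\otimes_{\mathbb{Z}G}P_i,\ \mathbb{Z}_p[[\widehat{G}_{\mathcal{C}}]]\bigr)$ with $\mathrm{Hom}_{\mathbb{Z}G}\bigl(P_i,\ \mathbb{Z}_p[[\widehat{G}_{\mathcal{C}}]]\bigr)$, and since $P_i$ is finitely generated projective over $\mathbb{Z}G$ the latter is $P_i^{\ast}\otimes_{\mathbb{Z}G}\mathbb{Z}_p[[\widehat{G}_{\mathcal{C}}]]$, where $P_i^{\ast}=\mathrm{Hom}_{\mathbb{Z}G}(P_i,\mathbb{Z}G)$ is made a left module via the standard anti-involution and completed tensor agrees with ordinary tensor by finite generation. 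As $G$ is a $PD_m$ group, the cochain complex $P_\bullet^{\ast}$, suitably reindexed, is a finitely generated projective $\mathbb{Z}G$-resolution of the dualizing module $D$, so $Ext^i_{\mathbb{Z}_p[[\widehat{G}_{\mathcal{C}}]]}(\mathbb{Z}_p,\mathbb{Z}_p[[\widehat{G}_{\mathcal{C}}]])\cong Tor^{\mathbb{Z}G}_{m-i}\bigl(D,\ \mathbb{Z}_p[[\widehat{G}_{\mathcal{C}}]]\bigr)$. Here is where the orientable finite-index subgroup enters: by directedness of $\mathcal{C}$ and $U_0\subseteq G_0$, the set $\{U\in\mathcal{C}:U\subseteq G_0\}$ is cofinal in $\mathcal{C}$, and on such $U$ the module $D$ restricts trivially, so the computation of the previous paragraph carries over with $D$ in place of the trivial module and yields $Tor^{\mathbb{Z}G}_j\bigl(D,\mathbb{Z}_p[[\widehat{G}_{\mathcal{C}}]]\bigr)=0$ for $j\geq1$ and $\cong\mathbb{Z}_p$ for $j=0$. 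Thus $Ext^i=0$ for $i\neq m$ and $Ext^m\cong\mathbb{Z}_p$; in particular $cd_p(\widehat{G}_{\mathcal{C}})=m$, so $\widehat{G}_{\mathcal{C}}$ is a strong profinite $PD_m$ group at $p$. The claim for $\widehat{(G_0)}_{\mathcal{C}}$ follows by the same argument applied to $G_0$ with the cofinal directed set $\{U\in\mathcal{C}:U\subseteq G_0\}$ (for which the limit hypothesis is inherited and $G_0$ is already orientable), observing that this completion is the closure of $G_0$ in $\widehat{G}_{\mathcal{C}}$ since $G_0$ has finite index. Finally, base change preserves ranks, so $\chi_p(\widehat{G}_{\mathcal{C}})=\sum_i(-1)^i\,\mathrm{rk}\,\bar P_i=\sum_i(-1)^i\,\mathrm{rk}\,P_i=\chi(G)$; equivalently $H_i(\widehat{G}_{\mathcal{C}},\mathbb{Q}_p)\cong\mathbb{Q}_p\otimes_{\mathbb{Q}}H_i(G,\mathbb{Q})$ by the same limit computation after inverting $p$.

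The main obstacle is the content of the second and third paragraphs: first, upgrading the $\mathbb{F}_p$-hypothesis to the vanishing of $\varprojlim_{U,k}H_i(U,\mathbb{Z}/p^k)$ for $i\geq1$ while controlling the $\varprojlim^1$-terms; and second, the identification of $Ext^{\ast}_{\mathbb{Z}_p[[\widehat{G}_{\mathcal{C}}]]}(\mathbb{Z}_p,-)$ with $Tor^{\mathbb{Z}G}_{\ast}(D,-)$ through the duality of finitely generated projective $\mathbb{Z}G$-modules and the base-change adjunction, where one must keep track of completed versus ordinary tensor products, of left/right module conventions, and of the reindexing by $m$. All of this goes through because of the finiteness built into $PD_m$ groups, namely type $FP$ and the finiteness of each $H_i(U,\mathbb{Z}/p^k)$, together with the directedness of $\mathcal{C}$ that makes the subgroups contained in $G_0$ cofinal.
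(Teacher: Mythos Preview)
The paper does not give its own proof of this statement; it is quoted verbatim from \cite[Thm.~4]{Koch2}, so there is no in-paper argument to compare against. Your outline is correct and is essentially the proof given in \cite{Koch2}: base-change a finite projective $\mathbb{Z}G$-resolution, use the inverse-limit hypothesis (bootstrapped from $\mathbb{F}_p$ to $\mathbb{Z}/p^k$) to see that the completed complex is exact, and then dualize using that $P_\bullet^{*}$ is a resolution of the dualizing module $D$. The same base-change step ($Tor_i^{\mathbb{Z}G}(\mathbb{Z}_p[[\widehat{G}]],\mathbb{Z})=0$ for $i\geq1$) is precisely what the present paper invokes, via \cite[Thm.~2.5]{Desi-Pavel}, in the proof of Lemma~\ref{LL2}.

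One point worth making explicit in your write-up: the orientation character $\omega:G\to\{\pm1\}$ factors through $G/U_0$ (since $U_0\subseteq G_0\subseteq\ker\omega$) and hence through $\widehat{G}_{\mathcal C}$, so the right $\widehat{G}_{\mathcal C}$-module structure on $Ext^m\cong D\otimes_{\mathbb{Z}G}\mathbb{Z}_p[[\widehat{G}_{\mathcal C}]]\cong\mathbb{Z}_p$ is exactly the induced orientation character. This is needed to conclude that $\widehat{(G_0)}_{\mathcal C}$ is orientable, not merely a $PD_m$ group at $p$.
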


\section{Some auxiliary results} 
 
 We will need the following simple lemmas.
 
 \begin{lemma} \label{H2} 
Let $S$ be an orientable $PD_2$-group with a subnormal subgroup $D$ of index $p^k$, where $p$ is prime, and let $j:D\to{S}$ be the inclusion.
Then $H_2(j;\mathbb{F}_p)=0$ and $H^2(j;\mathbb{F}_p)=0$.
\end{lemma}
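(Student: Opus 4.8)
The plan is to reduce both assertions to the single fact that a finite-index inclusion of orientable $PD_2$-groups induces multiplication by the index on top-dimensional (co)homology with $\mathbb{F}_p$-coefficients. First I note that subnormality of $D$ in $S$ is used only to guarantee that the index $[S:D]=p^k$ is finite; I shall treat $D$ simply as a subgroup of index $p^k$, and I may assume $k\geq 1$ (for $k=0$ one has $D=S$ and there is nothing to prove). Since a subgroup of finite index in a $PD_n$-group is again a $PD_n$-group, and orientability is inherited --- the dualizing module of $D$ is, by Shapiro's lemma, the restriction to $D$ of the dualizing module of $S$ --- the group $D$ is an orientable $PD_2$-group. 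Poincaré duality then gives \[ H_2(D;\mathbb{F}_p)\cong H^0(D;\mathbb{F}_p)\cong\mathbb{F}_p,\qquad H^2(D;\mathbb{F}_p)\cong H_0(D;\mathbb{F}_p)\cong\mathbb{F}_p, \] and likewise for $S$, so that $H_2(j;\mathbb{F}_p)$ and $H^2(j;\mathbb{F}_p)$ are maps between one-dimensional $\mathbb{F}_p$-vector spaces.

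For the homology statement I would invoke the transfer (corestriction) map $\mathrm{tr}\colon H_\ast(S;\mathbb{F}_p)\to H_\ast(D;\mathbb{F}_p)$, for which $H_\ast(j;\mathbb{F}_p)\circ\mathrm{tr}$ is multiplication by $[S:D]=p^k$. On $H_2$ the transfer is an isomorphism: under Poincaré duality it corresponds to the restriction map $H^0(S;\mathbb{F}_p)\to H^0(D;\mathbb{F}_p)$, which is the identity of $\mathbb{F}_p$ (equivalently, realizing $j$ by a $p^k$-sheeted covering of closed orientable surfaces, transfer carries fundamental class to fundamental class). Composing with the inverse of $\mathrm{tr}$ shows that $H_2(j;\mathbb{F}_p)$ is multiplication by $p^k$, hence zero because $p\mid p^k$.

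The cohomology statement then follows formally: over the field $\mathbb{F}_p$ the universal coefficient theorem identifies $H^2(-;\mathbb{F}_p)$ with the $\mathbb{F}_p$-dual of $H_2(-;\mathbb{F}_p)$ and $H^2(j;\mathbb{F}_p)$ with the dual of $H_2(j;\mathbb{F}_p)=0$, so it vanishes. (Alternatively the same transfer argument, applied to cohomological restriction and corestriction, identifies $H^2(j;\mathbb{F}_p)=\mathrm{res}$ with the transfer $H_0(S;\mathbb{F}_p)\to H_0(D;\mathbb{F}_p)$, which is multiplication by $p^k$, hence zero.) The only step that requires genuine care --- the main obstacle --- is the assertion that the transfer is an isomorphism on top homology, i.e.\ its compatibility with Poincaré duality, equivalently the classical computation of the degree of a finite covering; the cleanest way to settle it is to invoke that every orientable $PD_2$-group is a closed orientable surface group, after which the claim is standard. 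Everything else is bookkeeping.
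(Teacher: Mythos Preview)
Your argument is correct, but it differs from the paper's. The paper uses the subnormal chain to reduce to the case $[S:D]=p$, then picks an epimorphism $x\in H^1(S;\mathbb{F}_p)=\mathrm{Hom}(S,\mathbb{Z}/p\mathbb{Z})$ with kernel $D$, chooses $y$ with $x\cup y$ generating $H^2(S;\mathbb{F}_p)$ by Poincar\'e duality, and observes that $(x\cup y)(j_*\delta)=(j^*x\cup j^*y)(\delta)=0$ since $j^*x=0$. This is entirely elementary: it needs only the nondegeneracy of the cup-product pairing and the existence of a character killing $D$, which is exactly where the index-$p$ reduction (and hence subnormality) is used.

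Your transfer argument is more conceptual and in fact proves more: it works for \emph{any} subgroup of finite index divisible by $p$, with no subnormality hypothesis. (Your remark that subnormality ``guarantees the index is finite'' is slightly off --- the index is already given as $p^k$; what you mean is that subnormality plays no role in your proof.) The cost is that you must justify that the transfer is an isomorphism on top homology. Your two suggested routes both work: the algebraic compatibility of transfer with Poincar\'e duality (so that $\mathrm{tr}$ on $H_n$ matches $\mathrm{res}$ on $H^0$) holds for all orientable $PD_n$-groups and does not require the surface-group classification; invoking that classification is acceptable but heavier than necessary. A third, direct, route is to note that over $\mathbb{Z}$ one has $j_*[D]=p^k[S]$ in $H_2(S;\mathbb{Z})\cong\mathbb{Z}$, and since $j_*\circ\mathrm{tr}=p^k$ this forces $\mathrm{tr}[S]=\pm[D]$; reducing mod $p$ then gives the claim. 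Either way, the paper's cup-product argument is the more self-contained of the two.
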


\begin{proof}
Since $D$ is subnormal there is a chain $D=D_1<\dots<D_m=S$,
where $D_i$ is normal in $D_{i+1}$ and $[D_{i+1}:D_i]=p$,
for all $i<m$.
It shall suffice to show that 
$H_2(D_i;\mathbb{F}_p)\to{H_2(D_{i+1};\mathbb{F}_p)}$ is the zero map.
Thus we can assume that $D$ is a normal subgroup of $S$ of index $p$.

Let $j_*=H_*(j;\mathbb{F}_p)$ and $j^*=H^*(j;\mathbb{F}_p)$, 
for simplicity of notation.
Let $x\in{H^1(S;\mathbb{F}_p)=Hom(S,\mathbb{Z}/p\mathbb{Z})}$ 
be an epimorphism with kernel $D$.
Since $S$ is an orientable $PD_2$-group and $x\not=0$ there is a 
$y\in{H^1(S;\mathbb{F}_p)}$ such that $x\cup{y}$ generates
$H^2(S;\mathbb{F}_p)$.
If we evaluate $x\cup{y}$ on the image of a class 
$\delta\in{H_2(D;\mathbb{F}_p)}$ we get
$(x\cup{y})(j_2\delta)=j^*(x\cup{y})(\delta)=(j^*x\cup{j^*y})(\delta)=0$,
since $j^*x=0$ is the restriction of $x$ to $D$.
Hence $j_2\delta=0$, for all $\delta$, and so 
$H_2(j;\mathbb{F}_p)=0$.
The dual result $H^2(j;\mathbb{F}_p)=0$ follows immediately.
\end{proof}

 For an abstract  group $U$ denote by $\widehat{U}_p$ the pro-$p$ completion of $U$.
 
 \begin{lemma} \label{inv-inv} Let $G$ be an abstract group, $\mathcal{M}$ be a  directed set of normal subgroups of $p$-power index in $G$ that define the pro-$p$ topology on $G$ . Then 
 $ \underset{ M \in {\mathcal M}}{\varprojlim}  H_1(M, \mathbb{F}_p) = 0.$
 \end{lemma}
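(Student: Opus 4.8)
The plan is to prove that every element of the inverse limit vanishes. Recall that $H_1(M,\mathbb F_p)\cong M/[M,M]M^p$ is an elementary abelian $p$-group, and that for $M'\subseteq M$ the structure map $H_1(M',\mathbb F_p)\to H_1(M,\mathbb F_p)$ of the inverse system is the one induced by the inclusion $M'\hookrightarrow M$; its image is just the image of $M'$ in $M/[M,M]M^p$. So it suffices to show: if $(x_M)_{M\in\mathcal M}$ is a compatible family, then $x_{M_0}=0$ for every $M_0\in\mathcal M$. I would assume $x_{M_0}\neq 0$ for some $M_0$ and derive a contradiction.

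First I would choose a homomorphism $\phi\colon M_0\to\mathbb Z/p$ whose associated linear functional on the $\mathbb F_p$-vector space $H_1(M_0,\mathbb F_p)=M_0/[M_0,M_0]M_0^p$ does not vanish on $x_{M_0}$; this is possible simply because $x_{M_0}$ is a nonzero vector. Put $K=\ker\phi$, a normal subgroup of index $p$ in $M_0$. The key step — and the one point requiring a genuine argument, because $G$ (hence $M_0$) need not be finitely generated — is that the normal core $N=\bigcap_{g\in G}gKg^{-1}$ still has $p$-power index in $G$. Indeed, $N_G(K)\supseteq M_0$ since $K\triangleleft M_0$, so $K$ has at most $[G:M_0]$ conjugates; and each conjugate $gKg^{-1}$ is an index-$p$ normal subgroup of $M_0$, hence contains $[M_0,M_0]M_0^p$. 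Therefore $N$ is an intersection of finitely many hyperplanes in the $\mathbb F_p$-vector space $M_0/[M_0,M_0]M_0^p$, so $[M_0:N]$ is a finite power of $p$; as $[G:M_0]$ is a power of $p$ as well, $N$ is a normal subgroup of $G$ of $p$-power index contained in $K$.

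Since $\mathcal M$ defines the pro-$p$ topology of $G$, there is $M'\in\mathcal M$ with $M'\subseteq N\subseteq K\subseteq M_0$. By compatibility of the family, $x_{M_0}$ is the image of $x_{M'}$ under the inclusion-induced map $H_1(M',\mathbb F_p)\to H_1(M_0,\mathbb F_p)$, and that map factors through $H_1(K,\mathbb F_p)$. Hence $x_{M_0}$ lies in the image of $H_1(K,\mathbb F_p)\to H_1(M_0,\mathbb F_p)$, which (because $[M_0,M_0]M_0^p\subseteq K$) is exactly the hyperplane $K/[M_0,M_0]M_0^p=\ker\bar\phi$, where $\bar\phi$ is the functional induced by $\phi$; this contradicts $\bar\phi(x_{M_0})\neq 0$. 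The same idea can be packaged dually: $\varprojlim_M H_1(M,\mathbb F_p)$ embeds in $\bigl(\varinjlim_M\mathrm{Hom}(M,\mathbb F_p)\bigr)^{*}$, and the colimit vanishes since, by the core argument, every $\phi\colon M\to\mathbb F_p$ restricts to $0$ on a sufficiently small member of $\mathcal M$.
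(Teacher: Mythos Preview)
Your argument is correct. The core step---showing that the normal core in $G$ of an index-$p$ subgroup $K\trianglelefteq M_0$ still has $p$-power index---is handled cleanly, and the contradiction via a separating functional $\phi$ is sound. The dual repackaging at the end is also fine once one notes that $H_1(M,\mathbb F_p)$ embeds in its double dual, so $\varprojlim H_1(M,\mathbb F_p)\hookrightarrow(\varinjlim\mathrm{Hom}(M,\mathbb F_p))^{*}$.

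The paper takes a different and shorter route: it passes to the pro-$p$ completion $\widehat G_p$, observes that the closure $\overline M$ of $M$ there is $\widehat M_p$, and then uses continuity of $H_1$ for profinite groups to write
\[
\varprojlim_M H_1(M,\mathbb F_p)\cong\varprojlim_M H_1(\overline M,\mathbb F_p)\cong H_1\bigl(\textstyle\bigcap_M\overline M,\mathbb F_p\bigr)=H_1(1,\mathbb F_p)=0.
\]
This is slick but not self-contained: the identification $\overline M\cong\widehat M_p$ amounts to the statement that the pro-$p$ topology of $G$ induces the full pro-$p$ topology on each open $M$, and proving \emph{that} requires exactly the normal-core argument you wrote out. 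So your approach is more elementary and makes explicit the one nontrivial group-theoretic input, at the cost of a few more lines; the paper's approach trades that for standard facts about profinite homology.
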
 
 
 \begin{proof} Let $\overline{M}$ be the closure of $M \in \mathcal{M} $ in $\widehat{G}_p$. Then since $\overline{M} \simeq \widehat{M}_p$ and $\bigcap_{M \in {\mathcal{M}}} \overline{M} = 1$ we have
 $$ \underset{ M \in {\mathcal M}}{\varprojlim}  H_1(M, \mathbb{F}_p) \simeq 
  \underset{ M \in {\mathcal M}}{\varprojlim}  H_1(\overline{M}, \mathbb{F}_p) \simeq
   H_1(  \underset{ M \in {\mathcal M}}{\varprojlim} \overline{M}, \mathbb{F}_p) = H_1( \bigcap_{M \in {\mathcal{M}}} \overline{ M}, \mathbb{F}_p) = 0.$$
   \end{proof} 
   
\begin{proposition} \label{inverse-prop} 
Let $1 \to A \to B \to C \to 1$ be a short exact sequence of abstract groups. Let $\mathcal{T}$ be a directed set of subgroups in $B$. 
Suppose that  each $H_j(U \cap A, \mathbb{F}_p)$ is finite and 
    $  \underset{ U \in {\mathcal T}}{\varprojlim} H_j(U \cap A, \mathbb{F}_p) = 0$. Then
    $  \underset{ U \in {\mathcal T}}{\varprojlim} H_i(U/ (U \cap A), H_j(U \cap A, \mathbb{F}_p)) = 0$ for $i \geq 0$.
   \end{proposition}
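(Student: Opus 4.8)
The plan is to deduce the vanishing from the hypothesis $\varprojlim_{U\in\mathcal{T}}H_j(U\cap A,\mathbb{F}_p)=0$ by transporting it through the naturality of group homology in both variables. Write $Q_U=U/(U\cap A)$ and $M_U=H_j(U\cap A,\mathbb{F}_p)$; since $A\trianglelefteq B$ we have $U\cap A\trianglelefteq U$, so $M_U$ is a $\mathbb{Z}Q_U$-module via the conjugation action of $Q_U$. For $U'\subseteq U$ in $\mathcal{T}$ the inclusion $U'\cap A\hookrightarrow U\cap A$ is equivariant with respect to the induced homomorphism $\pi_{U',U}\colon Q_{U'}\to Q_U$, and therefore (by functoriality of homology in the pair $(\text{group},\text{module})$) induces a map $H_i(Q_{U'},M_{U'})\to H_i(Q_U,M_U)$. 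These are the transition maps making $\{H_i(Q_U,M_U)\}_{U\in\mathcal{T}}$ an inverse system over $\mathcal{T}$, while $\{M_U\}_{U\in\mathcal{T}}$ is the inverse system with transition maps $\varphi_{U',U}\colon M_{U'}\to M_U$ induced by the same inclusions; the hypothesis says this latter system has finite terms and trivial inverse limit.

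Next I would record the elementary but crucial factorisation
\[
H_i(Q_{U'},M_{U'})\longrightarrow H_i(Q_{U'},M_U)\longrightarrow H_i(Q_U,M_U),
\]
where the first arrow is induced by the coefficient homomorphism $\varphi_{U',U}$ (with $M_U$ regarded as a $\mathbb{Z}Q_{U'}$-module via $\pi_{U',U}$) and the second is induced by $\pi_{U',U}$, and whose composite is the transition map of the first system. Since $H_i(Q_{U'},-)$ is an additive functor on $\mathbb{Z}Q_{U'}$-modules, if $\varphi_{U',U}=0$ then the first arrow is zero, hence so is the transition map $H_i(Q_{U'},M_{U'})\to H_i(Q_U,M_U)$.

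Consequently it suffices to prove that for every $U\in\mathcal{T}$ there is $U'\in\mathcal{T}$ with $U'\subseteq U$ and $\varphi_{U',U}=0$: granting this, every compatible family in $\varprojlim_U H_i(Q_U,M_U)$ has its $U$-component equal to the image under a zero map of its $U'$-component, so all components vanish and the limit is $0$. To produce such $U'$, fix $U$ and look at the subgroups $\operatorname{im}\varphi_{W,U}\subseteq M_U$ as $W$ runs over the members of $\mathcal{T}$ contained in $U$. Directedness of $\mathcal{T}$ makes this family of subgroups directed downwards, and $M_U$ is finite, so it attains a minimum $I_U=\operatorname{im}\varphi_{W_0,U}$ for some $W_0\subseteq U$, with $\operatorname{im}\varphi_{W,U}=I_U$ for all $W\subseteq W_0$. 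A short check shows the $I_U$ form an inverse subsystem of $\{M_U\}$ with surjective transition maps, so $\varprojlim_U I_U\hookrightarrow\varprojlim_U M_U=0$; since an inverse limit of nonempty finite sets over a directed index set with surjective transition maps surjects onto each term, $I_U=0$ for every $U$, and $W_0$ is the required $U'$.

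The routine part is the naturality bookkeeping of the first two paragraphs; the only point needing care — the main obstacle — is the stabilisation argument of the last paragraph over a possibly uncountable directed index set, namely that the images in the finite group $M_U$ stabilise and that a surjective inverse system of finite sets over a directed set has limit surjecting onto each term (a compactness statement). I would cite the latter as standard rather than reprove it.
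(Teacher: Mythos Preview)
Your argument is correct and follows the same line as the paper's. Both proofs reduce to the observation that, since each $M_U=H_j(U\cap A,\mathbb{F}_p)$ is finite and $\varprojlim_U M_U=0$, for every $U$ there is some $U'\subseteq U$ in $\mathcal{T}$ with $M_{U'}\to M_U$ the zero map; once the coefficient map vanishes, the induced transition map on $H_i$ vanishes, and the inverse limit is zero.

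The only differences are presentational. The paper simply asserts the key finiteness step (``since $\varprojlim M_U=0$ and each $M_U$ is finite, for every $U_2$ there is $U_1$ with $d_{U_1,U_2}=0$'') as a known fact, whereas you supply the Mittag--Leffler/compactness argument explicitly. For the homological step, the paper fixes one free resolution $\mathcal{R}$ of $\mathbb{Z}$ over $\mathbb{Z}C$, restricts it to each $V_U=U/(U\cap A)\leq C$, and writes the transition map at the chain level as $\mathrm{id}_{\mathcal{R}}\otimes d_{U_1,U_2}$, making it visibly zero when $d_{U_1,U_2}=0$; your factorisation $H_i(Q_{U'},M_{U'})\to H_i(Q_{U'},M_U)\to H_i(Q_U,M_U)$ is the abstract version of the same computation. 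Neither route buys anything the other does not.
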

   
   \begin{proof} Set $M_U =  H_j(U \cap A, \mathbb{F}_p)$ and $V_U = U/ (U \cap A)$ a subgroup of $C$. Let $${\mathcal R} : \ldots \to R_i \to R_{i-1} \to \ldots \to R_0 \to \mathbb{Z} \to 0$$ be a free resolution of the trivial $\mathbb{Z} C$-module $\mathbb{Z}$. Then $H_i(V_U, M_U) = H_i (\mathcal{R}_U)$, where ${\mathcal{R}}_U = \mathcal{R} \otimes_{V_U} M_U$. The maps of the inverse system $\{  H_i(V_U, M_U) \ | \  U \in {\mathcal{T}} \}$  can be described as follows:  if $U_1, U_2 \in {\mathcal{T}}$, where $U_1 \subseteq U_2$ the map $\varphi_{U_1, U_2} : H_i(V_{U_1}, M_{U_1}) \to H_i(V_{U_2}, M_{U_2})$ is induced by the map $id_{\mathcal{R}} \otimes d_{U_1, U_2} : \mathcal{R} \otimes_{V_{U_1}} M_{U_1} \to \mathcal{R} \otimes_{V_{U_2}} M_{U_2}$ that sends $r_i \otimes m$ to $r_i \otimes d_{U_1, U_2}(m)$ for $r_i \in R_i$  and $ d_{U_1, U_2} : M_{U_1} \to M_{U_2}$ is induced by the inclusion map $U_1 \cap A \to U_2 \cap A$.
   
   Since $  \underset{ U \in {\mathcal T}}{\varprojlim} M_U = 0$ and each $M_U$ is finite, then for every $U_2 \in {\mathcal{T}}$ there is $U_1$ as above such that $d_{U_1, U_2}$ is the zero map. Then
   $\varphi_{U_1, U_2}$ is the zero map and hence  $  \underset{ U \in {\mathcal T}}{\varprojlim}  H_i(V_{U}, M_{U}) = 0$.
   \end{proof}
   
   \begin{lemma} \label{pro-p} Let $A$ be an orientable surface group. Let ${\mathcal{S}}$ be a directed set of normal subgroups of $p$-power index in $A$. Suppose that   $  \underset{ U \in {\mathcal S}}{\varprojlim} H_1(U, \mathbb{F}_p) = 0$. Then the completion $\overline{A} =  \underset{ U \in {\mathcal S}}{\varprojlim} A/ U$ of $A$ with respect to ${\mathcal{S}}$ is isomorphic to the pro-$p$ completion $\widehat{A}_p$.
   \end{lemma}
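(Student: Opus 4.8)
The plan is to identify $\overline A$ as a pro-$p$ $PD_2$-group of Euler characteristic $\chi(A)$ by means of Theorem~\ref{PD-PD}, and then to prove that the canonical surjection $\pi\colon\widehat A_p\twoheadrightarrow\overline A$ is injective. We may assume $A$ has genus $g\ge 1$; then $A$ is an infinite orientable $PD_2$-group with $\chi(A)=2-2g$ and $\dim_{\mathbb F_p}H^1(A;\mathbb F_p)=2g$, and every $U\in\mathcal S$ is again an orientable surface group.

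First I would check that $\varprojlim_{U\in\mathcal S}H_i(U;\mathbb F_p)=0$ for all $i\ge1$, so that Theorem~\ref{PD-PD} applies to $G=G_0=A$ with $\mathcal C=\mathcal S$. This is the hypothesis for $i=1$ and is automatic for $i\ge 3$ since each $U$ is a $PD_2$-group. For $i=2$, note that the vanishing for $i=1$ forces $\mathcal S$ to have no $\subseteq$-minimal element: a minimal one would, by directedness, be the least element $U_*$ of $\mathcal S$, giving $\varprojlim H_1(U;\mathbb F_p)=H_1(U_*;\mathbb F_p)\ne0$. Hence every $U_2\in\mathcal S$ contains some $U_1\in\mathcal S$ with $U_1\subsetneq U_2$, and as $U_1$ is then normal of $p$-power index in the orientable $PD_2$-group $U_2$, Lemma~\ref{H2} makes $H_2(U_1;\mathbb F_p)\to H_2(U_2;\mathbb F_p)$ the zero map, so these transition maps vanish cofinally and $\varprojlim_{\mathcal S}H_2(U;\mathbb F_p)=0$. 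Theorem~\ref{PD-PD} then gives that $\overline A=\widehat A_{\mathcal S}$ is a strong profinite $PD_2$-group at $p$ with $\chi_p(\overline A)=\chi(A)$; being an inverse limit of finite $p$-groups, it is a pro-$p$ $PD_2$-group, so $H^2(\overline A;\mathbb F_p)\cong\mathbb F_p$ and $\dim_{\mathbb F_p}H^1(\overline A;\mathbb F_p)=2-\chi_p(\overline A)=2g$.

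For the comparison, the inclusion of $\mathcal S$ into the family of all normal subgroups of $p$-power index gives a canonical continuous surjection $\pi\colon\widehat A_p\to\overline A$ with $\pi\circ\iota=\alpha$, where $\iota\colon A\to\widehat A_p$ and $\alpha\colon A\to\overline A$ are the canonical maps. Since $\alpha$ has dense image, $\alpha^*\colon H^1(\overline A;\mathbb F_p)\to H^1(A;\mathbb F_p)$ is injective between $2g$-dimensional spaces, hence an isomorphism; consequently $\pi^*$ is an isomorphism on $H^1(-;\mathbb F_p)$. Next I claim that $\alpha^*$ is an isomorphism on $H^2(-;\mathbb F_p)$, both spaces being $\mathbb F_p$: were $\alpha^*$ zero on $H^2$, then, $\alpha^*$ being a ring homomorphism that is surjective in degree $1$, the cup product $H^1(A;\mathbb F_p)\times H^1(A;\mathbb F_p)\to H^2(A;\mathbb F_p)$ would be identically zero, contradicting Poincaré duality for the closed orientable surface $A$. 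Since $\alpha^*=\iota^*\circ\pi^*$, it follows that $\pi^*$ is injective on $H^2(-;\mathbb F_p)$ as well.

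Finally, I would feed this into the five-term exact sequence of $1\to N\to\widehat A_p\xrightarrow{\pi}\overline A\to 1$ with $N=\ker\pi$:
$$0\to H^1(\overline A;\mathbb F_p)\xrightarrow{\pi^*}H^1(\widehat A_p;\mathbb F_p)\to H^1(N;\mathbb F_p)^{\overline A}\to H^2(\overline A;\mathbb F_p)\xrightarrow{\pi^*}H^2(\widehat A_p;\mathbb F_p).$$
As $\pi^*$ is an isomorphism on $H^1$ and injective on $H^2$, exactness forces $H^1(N;\mathbb F_p)^{\overline A}=0$. But $N$ is a closed normal subgroup of the pro-$p$ group $\widehat A_p$, hence pro-$p$; if $N\ne1$ it admits an open subgroup of index $p$, so $H^1(N;\mathbb F_p)\ne0$, and a nonzero discrete $\overline A$-module has nonzero $\overline A$-invariants because $\overline A$ is pro-$p$. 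This contradiction gives $N=1$, so $\pi$ is an isomorphism and $\overline A\cong\widehat A_p$. The one delicate point is the injectivity of $\pi^*$ on $H^2(-;\mathbb F_p)$, which is precisely where the orientable $PD_2$ structure of the discrete group $A$ (via nondegeneracy of its cup product) is essential; an alternative is to show that $\widehat A_p$ is itself a pro-$p$ $PD_2$-group (Theorem~\ref{PD-PD} for the full pro-$p$ topology, with Lemma~\ref{inv-inv} supplying the $i=1$ vanishing) and invoke that a surjection of pro-$p$ $PD_2$-groups with equal Euler characteristic is an isomorphism.
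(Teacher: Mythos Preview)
Your proof is correct, but it takes a genuinely different route from the paper's.

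The paper argues directly at the level of chain complexes. It takes the length-$2$ free $\mathbb{Z}A$-resolution $\mathcal R$ coming from the standard cell structure of the surface, sets $\widehat{\mathcal R}=\mathbb F_p[[\widehat A_p]]\otimes_{\mathbb ZA}\mathcal R$ and $\overline{\mathcal R}=\mathbb F_p[[\overline A]]\otimes_{\mathbb ZA}\mathcal R$, and observes two things: (i) $\widehat{\mathcal R}$ is a free resolution of $\mathbb F_p$ over $\mathbb F_p[[\widehat A_p]]$, so with $K=\ker(\widehat A_p\to\overline A)$ one has $H_1(K;\mathbb F_p)=H_1(\mathbb F_p\otimes_{\mathbb F_p[[K]]}\widehat{\mathcal R})=H_1(\overline{\mathcal R})$; and (ii) by \cite[Lemma~2.1]{Desi-Pavel}, $H_1(\overline{\mathcal R})=\varprojlim_{\mathcal S}H_1(U;\mathbb F_p)=0$. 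Hence $H_1(K;\mathbb F_p)=0$ and $K=1$. No appeal to Theorem~\ref{PD-PD}, no cup-product argument, and no need to verify the $i=2$ vanishing of the inverse limit.

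Your approach instead first promotes $\overline A$ to a pro-$p$ $PD_2$-group with the correct Euler characteristic via Theorem~\ref{PD-PD} (after deriving the $i=2$ vanishing from Lemma~\ref{H2} and the absence of a minimal element), and then compares $\widehat A_p$ and $\overline A$ cohomologically: surjectivity of $\alpha^*$ on $H^1$ plus nondegeneracy of the cup product on $A$ forces $\alpha^*$ (hence $\pi^*$) to be injective on $H^2$, and the five-term sequence then kills $H^1(N;\mathbb F_p)^{\overline A}$. This is longer and uses more machinery, but it has the virtue of making explicit that $\overline A$ is itself a Demu\v skin group of the expected Euler characteristic, and your cup-product step is essentially the same mechanism that drives Theorem~\ref{cdleq2 implies free}. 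The paper's argument is the more economical one; yours is a legitimate and informative alternative.
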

   
  \begin{proof}
   Consider the cellular chain complex associated to the standard Cayley complex of $A$, i.e.
   $${\mathcal{R}} : 0 \to \mathbb{Z} A \to (\mathbb{Z}A )^d \to \mathbb{Z} A \to \mathbb{Z} \to 0$$
   Consider the complexes  $\overline{\mathcal{R}} = \mathbb{F}_p[[\overline{A}]] \otimes_{\mathbb{Z} A} {\mathcal{R}}$ and  $\widehat{\mathcal{R}} = \mathbb{F}_p[[\widehat{A}_p]] \otimes_{\mathbb{Z} A} {\mathcal{R}}$.
   By \cite[Lemma 2.1]{Desi-Pavel}
   $H_i (\overline{\mathcal{R}}) =  \underset{ U \in {\mathcal S}}{\varprojlim}\  H_i(U, \mathbb{F}_p)$.
   Thus $H_1 (\overline{\mathcal{R}}) = 0$.
      
 Note that $\widehat{\mathcal{R}}$  is a free resolution of the trivial $\mathbb{F}_p[[\widehat{A}_p]]$-module $\mathbb{F}_p$. Let $\mathcal{T}$ be the directed set of all  normal subgroups of $p$-power index in $A$.
      Let $K = Ker (\widehat{A}_p \to \overline{A})$ and $\widehat{A}_p = \underset{ U \in {\mathcal T}}{\varprojlim} A/ U  \to \overline{A} = \underset{ U \in {\mathcal S}}{\varprojlim} A/ U$ is the epimorphism induced by the identity maps $id_{A/U }$ for $U \in {\mathcal S} \subseteq {\mathcal T}$.  Thus $\overline{\mathcal{R}} \simeq \mathbb{F}_p \otimes_{\mathbb{F}_p[[K]]} \widehat{\mathcal{R}}$. Then
   $$H_1(K, \mathbb{F}_p) = H_1( \mathbb{F}_p \otimes_{\mathbb{F}_p[[K]]} \widehat{\mathcal{R}}) \simeq H_1 (\overline{\mathcal{R}}) = 0 $$
Hence $K = 1$ and $\widehat{A}_p \simeq \overline{A}$.   
 \end{proof}

\begin{lemma} \label{obvious} Let $G$ be a group with pro-$p$ completion $\widehat{G}_p$.
Denote $\mu^i : H^i(\widehat{G}_p, \mathbb{F}_p) \to H^i(G, \mathbb{F}_p)$ the map induced by  the canonical map $G \to \widehat{G}_p$. Then we have a commutative diagram
\[ \begin{tikzcd}
H^i(\widehat{G}_p, \mathbb{F}_p) \times H^j(\widehat{G}_p, \mathbb{F}_p)  \arrow{r}{\cup} \arrow[swap]{d}{\mu^i \times \mu^j} & H^{i+j}(\widehat{G}_p, \mathbb{F}_p)  \arrow{d}{\mu^{i+ j}} \\
H^i({G}, \mathbb{F}_p) \times H^j({G}, \mathbb{F}_p) \arrow{r}{\cup}& H^{i+j}({G}, \mathbb{F}_p)
\end{tikzcd}
\]
where the horizontal maps are the cup products in the categories of pro-$p$ 
and abstract groups.
\end{lemma}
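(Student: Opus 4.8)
The statement is precisely the naturality of the cup product with respect to the canonical homomorphism $\iota\colon G\to\widehat{G}_p$, so in principle it is formal; the only point that needs care is that on the left we have the \emph{continuous} cohomology of a profinite group (which, for pro-$p$ groups, is the $\mathrm{Ext}$-theory used to define pro-$p$ $PD_n$-groups) while on the right we have ordinary abstract group cohomology, and $\mu^\bullet$ crosses between these two categories. The plan is to realise both cup products at the cochain level by the Alexander--Whitney formula and to check that $\iota$ induces a map of cochain complexes that is compatible with that formula on the nose; commutativity of the square then drops out by passing to cohomology.

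First I would recall the continuous-cochain model for $H^\bullet(\widehat{G}_p,\mathbb{F}_p)$: since $\widehat{G}_p$ is profinite and $\mathbb{F}_p$ is a finite discrete module, this cohomology is computed by the complex $C^\bullet_{\mathrm{cont}}(\widehat{G}_p,\mathbb{F}_p)$ of continuous maps $\widehat{G}_p^{\,n}\to\mathbb{F}_p$ with the usual (in)homogeneous bar differential, and on this complex the cup product is given by $(\alpha\cup\beta)(g_1,\dots,g_{i+j})=\alpha(g_1,\dots,g_i)\,\beta(g_{i+1},\dots,g_{i+j})$; that this agrees with the $\mathrm{Ext}$-theoretic cup product on $Ext^\bullet_{\mathbb{Z}_p[[\widehat{G}_p]]}(\mathbb{Z}_p,-)$ used elsewhere in the paper is standard (see \cite{Serre}, \cite{book1}). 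The very same formula computes cup products in $H^\bullet(G,\mathbb{F}_p)$ on the complex $C^\bullet(G,\mathbb{F}_p)$ of all set maps $G^{\,n}\to\mathbb{F}_p$.

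Next, precomposition with $\iota^{\,n}\colon G^n\to\widehat{G}_p^{\,n}$ sends a continuous cochain $c$ to $c\circ\iota^{\,n}\in C^n(G,\mathbb{F}_p)$. Because the bar differential is expressed purely through the group operation and $\iota$ is a homomorphism, $c\mapsto c\circ\iota^\bullet$ is a morphism of cochain complexes $C^\bullet_{\mathrm{cont}}(\widehat{G}_p,\mathbb{F}_p)\to C^\bullet(G,\mathbb{F}_p)$, and the induced map on cohomology is exactly $\mu^\bullet$. Evaluating on $(g_1,\dots,g_{i+j})\in G^{i+j}$ gives
\[
\big((\alpha\circ\iota^{\,i})\cup(\beta\circ\iota^{\,j})\big)(g_1,\dots,g_{i+j})=\alpha(\iota g_1,\dots,\iota g_i)\,\beta(\iota g_{i+1},\dots,\iota g_{i+j})=\big((\alpha\cup\beta)\circ\iota^{\,i+j}\big)(g_1,\dots,g_{i+j}),
\]
so this chain map commutes with the Alexander--Whitney product at the cochain level; passing to cohomology yields $\mu^{i+j}(u\cup v)=\mu^i(u)\cup\mu^j(v)$, which is the asserted commutative diagram. (Alternatively, one can argue more invariantly: the cup product factors as an external product $H^i\otimes H^j\to H^{i+j}(-\times-)$ followed by restriction along the diagonal, both natural, and $\iota$ intertwines the diagonals, $(\iota\times\iota)\circ\Delta_G=\Delta_{\widehat{G}_p}\circ\iota$; with $\mathbb{F}_p$-coefficients the external product behaves well in both the profinite and the abstract settings.) The main obstacle here is not a genuine difficulty but the bookkeeping of the first paragraph — making sure the continuous-cochain cup product coincides with the $\mathrm{Ext}$-description of the pro-$p$ cup product — and that verification is classical.
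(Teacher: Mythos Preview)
Your proof is correct and follows essentially the same route as the paper: both compute the cup product on the inhomogeneous (continuous, resp.\ abstract) bar cochains via the Alexander--Whitney formula, observe that precomposition with the canonical map $G\to\widehat{G}_p$ is a cochain map, and verify the square commutes already at the cochain level. Your write-up is in fact slightly more detailed than the paper's, which simply records the two cochain-level cup products and the induced maps $\nu^i$ and notes the diagram commutes.
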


\begin{proof} Following \cite{Serre} consider the set ${C}^n(\widehat{G}_p, \mathbb{F}_p)$ of all continuous maps $\widehat{G}_p^n \to \mathbb{F}_p$.  Then there is a map
$\cup :  {C}^i(\widehat{G}_p, \mathbb{F}_p) \times {C}^j(\widehat{G}_p, \mathbb{F}_p) \to {C}^{i+j}(\widehat{G}_p, \mathbb{F}_p)$ defined by $(f \cup h) (g_1, \ldots, g_{i+j}) = f(g_1, \ldots, g_i) h(g_{i+1}, \ldots, g_{i+j})$ that induces the cup product $\cup :  {H}^i(\widehat{G}_p, \mathbb{F}_p) \times {H}^j(\widehat{G}_p, \mathbb{F}_p) \to {H}^{i+j}(\widehat{G}_p, \mathbb{F}_p)$.

Similarly we can consider the set ${C}_0^n({G}, \mathbb{F}_p)$ of all maps ${G}^n \to \mathbb{F}_p$.  Then there is a map
$\cup :  {C}_0^i({G}, \mathbb{F}_p) \times {C}_0^j({G}, \mathbb{F}_p) \to {C}_0^{i+j}({G}, \mathbb{F}_p)$ defined by $(f \cup h) (g_1, \ldots, g_{i+j}) = f(g_1, \ldots, g_i) h(g_{i+1}, \ldots, g_{i+j})$ that induces the cup product $\cup :  {H}^i({G}, \mathbb{F}_p) \times {H}^j({G}, \mathbb{F}_p) \to {H}^{i+j}({G}, \mathbb{F}_p)$.

The canonical map $G \to \widehat{G}_p$ induces maps $\nu^i: C^i(\widehat{G}_p, \mathbb{F}_p) \to C^i_0(G, \mathbb{F}_p)$, that induce the maps $\mu^i$. Then by the definition of the cup product we have a commutative diagram
\[ \begin{tikzcd}
C^i(\widehat{G}_p, \mathbb{F}_p) \times C^j(\widehat{G}_p, \mathbb{F}_p)  \arrow{r}{\cup} \arrow[swap]{d}{\nu^i \times \nu^j} & C^{i+j}(\widehat{G}_p, \mathbb{F}_p)  \arrow{d}{\nu^{i+ j}} \\
C_0^i({G}, \mathbb{F}_p) \times C_0^j({G}, \mathbb{F}_p) \arrow{r}{\cup}& C_0^{i+j}({G}, \mathbb{F}_p)
\end{tikzcd}
\]
and this commutative diagram induces the commutative diagram from the statement of the lemma.
\end{proof}

\section{pro-$p$ completions of $PD_3$-groups}
\label{proPD3}

In this section we shall sharpen some of the results of \cite{Desi-Pavel}.
Cases (a), (b), (c) and (d) shall refer to the four possibilities 
in the statement of Theorem \ref{PD3-}.

We begin by refining the statements of cases (a) and (d).
Theorem A of the introduction is then an immediate consequence.

\begin{lemma}
\label{virtually Zp}
Let $G$ be a finitely generated group and $p$ be a prime,
and let $K$ be the kernel of the natural homomorphism from $G$ to 
$\widehat{G}_p$.
Suppose that $\widehat{G}_p$ is virtually $\widehat{\mathbb{Z}}_p$.
Then $G/K$ has a finite normal subgroup $F$ which is a $p$-group,
and such that $G/K\cong{F}\rtimes\mathbb{Z}$ if $p$ is odd, 
while the quotient of $G/K$ by $F$ is $\mathbb{Z}$ or $D_\infty$ if $p=2$.
\end{lemma}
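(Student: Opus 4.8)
Write $P := \widehat G_p$ and let $\iota \colon G/K \hookrightarrow P$ be the injective homomorphism with dense image induced by the natural map $G \to P$. Because $K$ is contained in every normal subgroup of $p$-power index of $G$, the normal subgroups of $p$-power index of $G/K$ are exactly the images of those of $G$; hence the pro-$p$ completion of $G/K$ is again $P$, and $G/K$ is finitely generated.

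The first step is to determine the structure of $P$: I claim $P$ has a finite normal $p$-subgroup $T$ with $P/T \cong \mathbb{Z}_p$, except that if $p = 2$ one may instead have $P/T \cong \mathbb{Z}_2 \rtimes (\mathbb{Z}/2\mathbb{Z})$ (the pro-$2$ completion of $D_\infty$, with inversion action). To see this, pick an open normal subgroup $N \cong \mathbb{Z}_p$ of $P$ (the normal core of any open subgroup isomorphic to $\mathbb{Z}_p$) and consider the conjugation map $P \to \Aut(N) = \mathbb{Z}_p^\times$. Its image is a quotient of the finite $p$-group $P/N$, hence a finite $p$-subgroup of $\mathbb{Z}_p^\times$, hence trivial if $p$ is odd and of order at most $2$ if $p = 2$; so $C := C_P(N)$ has $[P:C] \le 2$, with $C = P$ for $p$ odd. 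Now $C$ is a central extension of a finite $p$-group by $N \cong \mathbb{Z}_p$, hence nilpotent; its torsion elements form a characteristic subgroup $T := T(C)$, which is finite because $T(C) \cap N = 1$ and $[C:N] < \infty$, and $C/T$ is torsion-free nilpotent and virtually $\mathbb{Z}_p$, hence $\cong \mathbb{Z}_p$ (a short argument via the center shows $C/T$ is abelian). As $C \trianglelefteq P$, also $T \trianglelefteq P$. If $C = P$ we are done; if $[P:C] = 2$, any $g \in P \setminus C$ acts on $C/T \cong \mathbb{Z}_p$ by an automorphism whose restriction to the open subgroup $NT/T$ is $-1$, hence $g$ acts by $-1$ on all of $C/T$, and combining $g x g^{-1} = x^{-1}$ (for $x$ a topological generator of $C/T$) with $g^2 \in C/T$ forces $g^2 = 1$ in $P/T$; thus $P/T = (C/T) \rtimes \langle \bar g \rangle \cong \mathbb{Z}_2 \rtimes (\mathbb{Z}/2\mathbb{Z})$.

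Next, put $F := (G/K) \cap T$, a finite normal $p$-subgroup of $G/K$, and $H := (G/K)/F$, which $\iota$ identifies with a dense finitely generated subgroup of $P/T$. Right-exactness of pro-$p$ completion applied to $1 \to F \to G/K \to H \to 1$, together with $\widehat F_p = F$ and $\widehat{(G/K)}_p = P$, shows that $\widehat H_p$ is the quotient of $P$ by the closed normal subgroup generated by $F$; as this subgroup lies in $T$, the quotient is infinite, and an infinite quotient of a pro-$p$ group that is virtually $\mathbb{Z}_p$ is again virtually $\mathbb{Z}_p$ (the image of an open $\mathbb{Z}_p$ is closed of finite index and a quotient of $\mathbb{Z}_p$, so $\cong \mathbb{Z}_p$ or finite). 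Now I read off $H$ from its density in $P/T$. If $P/T \cong \mathbb{Z}_p$, then $H$ is a nontrivial finitely generated torsion-free abelian group, $H \cong \mathbb{Z}^r$, and $\widehat H_p \cong \mathbb{Z}_p^r$ being virtually $\mathbb{Z}_p$ forces $r = 1$. If $P/T \cong \mathbb{Z}_2 \rtimes (\mathbb{Z}/2\mathbb{Z})$, then density forces $H \not\subseteq \mathbb{Z}_2$; every element of $H$ outside $H_0 := H \cap \mathbb{Z}_2$ has order $2$, so $H = H_0 \rtimes (\mathbb{Z}/2\mathbb{Z})$ with inversion action, i.e. $H \cong \mathbb{Z}^r \rtimes_{-1} (\mathbb{Z}/2\mathbb{Z})$ with $r \ge 1$, and $\widehat H_2 \cong \mathbb{Z}_2^r \rtimes_{-1} (\mathbb{Z}/2\mathbb{Z})$ being virtually $\mathbb{Z}_2$ forces $r = 1$, so $H \cong D_\infty$. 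In all cases $(G/K)/F \cong \mathbb{Z}$ or $D_\infty$, and when it is $\mathbb{Z}$ the extension $1 \to F \to G/K \to \mathbb{Z} \to 1$ splits since $\mathbb{Z}$ is free, giving $G/K \cong F \rtimes \mathbb{Z}$.

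I expect the main obstacle to be the structural step for $P$ in the case $p = 2$, where $P$ need not be nilpotent: one must identify the canonical finite normal subgroup $T$ (taken here as the torsion subgroup of the index-$\le 2$ centralizer $C_P(N)$) and verify that quotienting by it collapses $P$ onto $\mathbb{Z}_2$ or $\mathbb{Z}_2 \rtimes (\mathbb{Z}/2\mathbb{Z})$. A second, subtler point is the identification of $\widehat H_p$ with a quotient of $\widehat G_p$, which is precisely what rules out $H$ having ``rank'' $\ge 2$, even though $\mathbb{Z}_p$ and its dihedral extension do contain finitely generated dense subgroups of every rank.
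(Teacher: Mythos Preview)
Your proof is correct but takes a genuinely different route from the paper's. The paper intersects $G/K$ with an open normal copy of $\mathbb{Z}_p$ in $P=\widehat G_p$ to obtain a finite-index subgroup $A\cong\mathbb{Z}$ of $G/K$ (using that its pro-$p$ completion is $\mathbb{Z}_p$), so that $G/K$ has two ends; it then invokes the classification of two-ended groups to produce the maximal finite normal subgroup $F$ with $(G/K)/F\cong\mathbb{Z}$ or $D_\infty$, and applies Schur's lemma to $G/K$ (with $A$ central when $p$ is odd) to rule out $D_\infty$ in the odd case. You instead first determine the structure of $P$ itself, applying Schur's lemma on the pro-$p$ side to the centralizer $C=C_P(N)$ to show that $P$ has a finite normal subgroup $T$ with $P/T\cong\mathbb{Z}_p$ or, for $p=2$, the pro-$2$ infinite dihedral group; you then take $F=(G/K)\cap T$ and read off $(G/K)/F$ as a finitely generated dense subgroup of $P/T$, with the rank bound coming from the right-exactness argument showing $\widehat H_p$ is still virtually $\mathbb{Z}_p$.

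The paper's argument is shorter, since the two-ended classification packages several steps at once. Your argument is more self-contained (no appeal to ends) and yields a structural description of an arbitrary pro-$p$ group that is virtually $\mathbb{Z}_p$, independent of any Poincar\'e duality hypothesis; this is pleasant, though the paper does not need it in this generality. Your ``short argument via the center'' that $C/T$ is abelian is exactly Schur's theorem (finite-index center implies finite derived subgroup), applied to the torsion-free group $C/T$; the paper uses the same lemma, but on $G/K$ rather than on $P$. The two points you flagged as potential obstacles --- the $p=2$ structural step for $P$ and the identification of $\widehat H_p$ needed to force $r=1$ --- are both handled correctly.
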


\begin{proof}
Since $\widehat{G}_p$ is virtually $\widehat{\mathbb{Z}}_p$,
there is a short exact sequence 
\[
0\to\widehat{\mathbb{Z}}_p\to\widehat{G}_p\to{T}\to0,
\]
where $T$ is a finite $p$-group.
Hence there is a short exact sequence $ 1 \to A \to G/ K \to T \to 1$, 
where $A\cong\mathbb{Z}$.
Therefore $ G/K$ has two ends, 
and so it has a maximal finite normal subgroup $F$ 
with quotient $\mathbb{Z}$ or $D_\infty$.
The subgroup $F$ maps injectively to $T$, and so is a $p$-group.
If $p$ is odd then $A$ is central, since $Aut(A)=\{\pm1\}$ has order 2.
Since $[G/K:A]$ is finite,  $(G/K)'$ is finite, by a lemma of Schur.
Hence $G/K\cong{F}\rtimes\mathbb{Z}$, with $F$ finite.
\end{proof}

\begin{theorem}
\label{refining 2.1}
Let  $G$ be an orientable $PD_3$-group and $p$ be a prime. Then
\begin{enumerate}
\item{}if $\widehat{G}_p$ is finite then it is cyclic or quaternionic;
\item{}if $\widehat{G}_p$ is virtually $\widehat{\mathbb{Z}}_p$ then 
either $\widehat{G}_p\cong\widehat{\mathbb{Z}}_p$ or $p=2$ 
and $\widehat{G}_2\cong\widehat{D_\infty}_2$.
\end{enumerate}
\end{theorem}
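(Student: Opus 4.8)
The plan is to deduce both parts from a single inequality. For a subgroup $V$ of $\widehat{G}_p$ of $p$-power index (automatic when $V$ is open, as $\widehat{G}_p$ is pro-$p$), let $G_V$ be its preimage under $G\to\widehat{G}_p$. Then $G_V$, being of finite index in $G$, is again an orientable $PD_3$-group, and one checks that $\widehat{(G_V)}_p\cong V$: every $p$-power-index normal subgroup $M$ of $G_V$ contains $K=\ker(G\to\widehat{G}_p)$, since its normal core in $G$ is a finite intersection of conjugates of $M$, each of $p$-power index in $G$, hence is a $p$-power-index normal subgroup of $G$ and so contains $K$; the reverse inclusion of intersections is immediate, and this identifies $\widehat{(G_V)}_p$ with the closure of $G_V$ in $\widehat{G}_p$, namely $V$. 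Now $\mu^1\colon H^1(V,\mathbb{F}_p)\to H^1(G_V,\mathbb{F}_p)$ is an isomorphism, and $\mu^2\colon H^2(V,\mathbb{F}_p)\to H^2(G_V,\mathbb{F}_p)$ is injective: if a central extension of $V$ by $\mathbb{F}_p$ in the pro-$p$ category splits after pulling back to $G_V$, a splitting $G_V\to E$ factors through $\widehat{(G_V)}_p=V$ because $E$ is pro-$p$, so the extension already splits. These maps respect cup products by Lemma~\ref{obvious}. Since $G_V$ is an orientable $PD_3$-group, $\dim_{\mathbb{F}_p}H^2(G_V,\mathbb{F}_p)=\dim_{\mathbb{F}_p}H^1(G_V,\mathbb{F}_p)$ and the cup product $H^1(G_V,\mathbb{F}_p)\times H^2(G_V,\mathbb{F}_p)\to H^3(G_V,\mathbb{F}_p)\cong\mathbb{F}_p$ is a perfect pairing. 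Hence $\dim_{\mathbb{F}_p}H^2(V,\mathbb{F}_p)\le\dim_{\mathbb{F}_p}H^1(V,\mathbb{F}_p)$ for every such $V$.

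For part (1): if the finite $p$-group $Q=\widehat{G}_p$ is neither cyclic nor (for $p=2$) generalized quaternion, then by the classical description of $p$-groups with a unique subgroup of order $p$ it has $p$-rank at least $2$, hence contains $E\cong(\mathbb{Z}/p)^2$. But $\dim_{\mathbb{F}_p}H^2(E,\mathbb{F}_p)=3>2=\dim_{\mathbb{F}_p}H^1(E,\mathbb{F}_p)$, contradicting the inequality above with $V=E$. So $Q$ is cyclic, or generalized quaternion with $p=2$.

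For part (2): using Lemma~\ref{virtually Zp} together with the standard fact that a pro-$p$ group which is virtually $\widehat{\mathbb{Z}}_p$ and has no nontrivial finite normal subgroup is $\widehat{\mathbb{Z}}_p$, or $\widehat{D_\infty}_2$ when $p=2$, it suffices to show that the largest finite normal subgroup $\Phi$ of $\widehat{G}_p$ is trivial. Suppose $\Phi\ne 1$ and pick $t\in\Phi$ of order $p$. Then $C_{\widehat{G}_p}(t)$ has finite index (the conjugacy class of $t$ lies in $\Phi$), hence is open, and $t$ is central in it; intersecting with an open normal procyclic subgroup yields an open subgroup of $\widehat{G}_p$ isomorphic to $\mathbb{Z}/p\times\widehat{\mathbb{Z}}_p$. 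Its preimage $H$ in $G$ is an orientable $PD_3$-group with $\widehat{H}_p\cong\mathbb{Z}/p\times\widehat{\mathbb{Z}}_p$, and since $\dim_{\mathbb{F}_p}H^i(\widehat{H}_p,\mathbb{F}_p)=2$ for $i=1,2$, the maps $\mu^1,\mu^2$ are isomorphisms. By Lemma~\ref{obvious}, $H^i(H,\mathbb{F}_p)$ for $i\le 2$ has the cup-product structure of $H^i(\mathbb{Z}/p\times\widehat{\mathbb{Z}}_p,\mathbb{F}_p)$; let $x,z\in H^1(H,\mathbb{F}_p)$ be the images of the two generators, so $z^2=0$ and $xz\ne 0$ in $H^2(H,\mathbb{F}_p)$. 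If $p$ is odd then $x^2=0$ as well, so $xz$ is a nonzero element of $H^2(H,\mathbb{F}_p)$ with $xz\cup x=xz\cup z=0$, contradicting the perfect Poincaré pairing. If $p=2$ the ring is $\mathbb{F}_2[x]/(x^3)\otimes\Lambda(z)$, and the perfect pairing forces $x^2z\ne 0$ in $H^3(H,\mathbb{F}_2)$; but $H$ is an orientable $PD_3$-group, so the Wu formula gives $\mathrm{Sq}^1\equiv 0$ on $H^2(H,\mathbb{F}_2)$, whence, by naturality, $x^2z=\mathrm{Sq}^1(xz)=0$, a contradiction. Therefore $\Phi=1$, and $\widehat{G}_p\cong\widehat{\mathbb{Z}}_p$, or $\widehat{G}_2\cong\widehat{D_\infty}_2$.

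The step I expect to be the real obstacle is the sub-case $p=2$, $\widehat{H}_2\cong\mathbb{Z}/2\times\widehat{\mathbb{Z}}_2$ above: Poincaré duality and the ring structure alone do \emph{not} exclude it, since $\mathbb{F}_2[x]/(x^3)\otimes\Lambda(z)$ is a genuine orientable $PD_3$-algebra (it is $H^*(\mathbb{R}\mathrm{P}^2\times S^1;\mathbb{F}_2)$), and one must invoke the extra rigidity of $\mathrm{Sq}^1$ and the Wu formula for $PD_3$-complexes. This is exactly why $\widehat{D_\infty}_2$ survives as an extra possibility when $p=2$, whereas for odd $p$ only $\widehat{\mathbb{Z}}_p$ remains. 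A secondary point requiring care, used throughout, is the identification $\widehat{(G_V)}_p\cong V$ for $V$ of $p$-power index — routine via the normal-core argument sketched in the first paragraph.
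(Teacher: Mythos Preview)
Your proof is correct and takes a genuinely different route from the paper's. The paper argues via spectral sequences: for (1) it runs the LHS spectral sequence for $1\to K\to G\to P\to1$ (with $P=\widehat{G}_p$ finite) to show that $P$ has periodic cohomology of period dividing~4; for (2), after reducing via Lemma~\ref{virtually Zp} to an extension $1\to K\to L\to T\to1$ with $T$ a finite $p$-group, it computes $H_*(K;\mathbb{F}_p)$ using Poincar\'e duality and Shapiro's Lemma, identifies $K$ as an $\mathbb{F}_p$-homology $2$-sphere, and extracts a Gysin sequence forcing $T$ to have cohomological period dividing~3, hence $T=1$. Your approach instead rests on one uniform observation: for every open $V\le\widehat{G}_p$ the preimage $G_V$ is again an orientable $PD_3$-group with $\widehat{(G_V)}_p\cong V$, whence $\dim H^2(V;\mathbb{F}_p)\le\dim H^1(V;\mathbb{F}_p)$. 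This disposes of (1) immediately by excluding $(\mathbb{Z}/p)^2$ as a subgroup, and for (2) it forces $\mu^2$ to be an isomorphism when $V\cong\mathbb{Z}/p\times\widehat{\mathbb{Z}}_p$, after which cup products (for odd $p$) or the Bockstein $\mathrm{Sq}^1$ (for $p=2$) finish the job; the key fact that $\mathrm{Sq}^1=0$ on $H^2$ of an orientable $PD_3$-group is just the injectivity of $H^3(-;\mathbb{F}_2)\to H^3(-;\mathbb{Z}/4)$. Your argument is more elementary in that it avoids spectral sequences altogether and treats both parts with the same tool; the paper's argument, in return, yields the sharper periodicity statements. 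Two cosmetic points: the cohomology ring of $\mathbb{Z}/2\times\widehat{\mathbb{Z}}_2$ is $\mathbb{F}_2[x]\otimes\Lambda(z)$, not truncated at $x^3$ (this does not affect your argument, which only uses products in degrees $\le2$ and $z^2=0$); and ``intersecting with an open normal procyclic subgroup'' should be read as ``taking the product of $\langle t\rangle$ with an open procyclic subgroup of the centraliser'', which is what you use.
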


\begin{proof}
Let $K$ be the kernel of the natural homomorphism from $G$
to $\widehat{G}_p$. 
Suppose first that $\widehat{G}_p$ is finite.
Then $P=G/K\cong\widehat{G}_p$ is a finite $p$-group, 
and $K$ has no quotient which is a finite $p$-group.
Hence $H_1(K;\mathbb{Z})$ is  finite, 
of order prime to $p$, and $H^1(K;\mathbb{Z})=0$.
Consider the LHS spectral sequence 
\[
E^{p,q}_2=H^p(P;H^q(K;\mathbb{Z}))\Rightarrow{H^{p+q}(G;\mathbb{Z})}
\]
for the cohomology of $G$.
The group $P$ acts trivially on $H^0(K;\mathbb{Z})$ and $H^3(K;\mathbb{Z})$,
since $G$ is orientable, while $H^1(K;\mathbb{Z})=0$ and 
$H^2(K;\mathbb{Z})\cong{H_1(K;\mathbb{Z})}$.
Therefore $E^{p,q}_2=0$ if $p>0$ and $q\not=0$ or 3, 
or if $p=0$ and $q\not=0$, 2 or 3.

A slight extension of the argument of \cite[Lemma IV.6.2]{Adem-Milgram}
shows that $P$ has periodic cohomology
(with period dividing 4).
Since $P$ is a $p$-group, it must be either cyclic (of prime power order)
or quaternionic (of order a power of 2).
(Note that a finite group has periodic cohomology if and 
only if every abelian subgroup is cyclic.)

Suppose now that  $\widehat{G}_p$ is virtually $\widehat{\mathbb{Z}}_p$.
It follows from Lemma \ref{virtually Zp}
that $G$ has normal subgroups $K<L$ 
such that $G/L\cong\mathbb{Z}$ or $D_\infty$
and $J=L/K$ is a finite $p$-group, 
while $K$ has no non-trivial quotient which is a $p$-group.
On passing to a subgroup of index 2 in $G$, if necessary, 
we may assume that $G/L\cong\mathbb{Z}$.

Let $\Lambda=\mathbb{F}_p[G/K]$.
Then $H_i(K;\mathbb{F}_p)=H_i(G;\Lambda)$, for all $i$.
Clearly $H_0(K;\mathbb{F}_p)=\mathbb{F}_p$ and 
$H_1(K;\mathbb{F}_p)=0$,
while $H_i(K;\mathbb{F}_p)=0$ for $i>2$,
since $K$ has infinite index in $G$ and so $cd(K) < 3$ \cite{Strebel}.
We also have
$H_2(K;\mathbb{F}_p)=H_2(G;\Lambda)\cong{H^1(G;\Lambda)}$,
by Shapiro's Lemma and Poincaré duality.
Now $H_0(G;\Lambda)=\mathbb{F}_p$ and 
$H_1(G;\Lambda)=H_1(K;\mathbb{F}_p)=0$.
Hence $H^1(G;\Lambda)\cong{Ext^1_\Lambda(\mathbb{F}_p,\Lambda)}$,
by the Universal Coefficient spectral sequence
(or by an {\it ad hoc\/}low-degree argument).
Since $G/K$ has two ends, 
$Ext^1_\Lambda(\mathbb{F}_p,\Lambda)\cong\mathbb{F}_p$.
Thus we conclude that
$H_2(K;\mathbb{F}_p)\cong  H^1(G;\Lambda) \cong \mathbb{F}_p$.

The  LHS spectral sequence for the $\mathbb{F}_p$-cohomology 
of $L$ associated to the extension $1\to{K}\to{L}\to{T}\to1$ may be identified with the Leray-Serre spectral sequence for the fibration of $K(L,1)$ 
over $K(T,1)$.
The fibre $K(K,1)$ is  a $\mathbb{F}_p$-homology 2-sphere.
Since $T$ is a $p$-group it acts trivially on $\mathbb{F}_p$,
and therefore acts trivially on $H_*(K;\mathbb{F}_p)$.
Hence this spectral sequence reduces to a Gysin sequence
\[
\dots\to{H^{k+2}(L;\mathbb{F}_p)}\to{H^k(T;\mathbb{F}_p)}\to{H^{k+3}(T;\mathbb{F}_p)}\to{H^{k+3}(L;\mathbb{F}_p)}\to\dots,
\]
where the middle homomorphism is given by cup-product with a class
$z\in{H^3(T;\mathbb{F}_p)}$,
as in \cite[Example 5.C]{McCleary}.
Since $H^i(L;\mathbb{F}_p)=0$ for $i\geq3$ it follows that
these cup products induce isomorphisms
$H^i(T;\mathbb{F}_p)\to{H^{i+3}(T;\mathbb{F}_p)}$, for all $i\geq0$. 
Hence $T$ has cohomological period (dividing) 3.
But  a non-trivial finite group with periodic cohomology has even 
cohomological period \cite[Exercise VI.9.1]{Brown-book}.
Hence $T$ must be trivial.
\end{proof}

Theorem A now follows immediately from Theorems \ref{PD3-} and 
\ref{refining 2.1}.

We may easily identify the orientable $PD_3$-groups 
with pro-$p$ completion of type (a) or (d), 
when $p$ is odd.
(We do not yet have a comparably simple characterization when $p=2$.)

\begin{corollary}
If $p$ is an odd prime then $\widehat{G}_p$ is finite if and only if
$G/G'$ is finite and has cyclic $p$-torsion,
while $\widehat{G}_p\cong\widehat{\mathbb{Z}}_p$ if and only if
$G/G'\cong\mathbb{Z}\oplus{T}$,
where $T$ is finite and $(p,|T|)=1$.
\qed
\end{corollary}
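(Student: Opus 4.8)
The plan is to push everything down to the abelianisation. The key identity is that pro-$p$ completion commutes with abelianisation: for a finitely generated group $G$ one has $(\widehat{G}_p)^{\mathrm{ab}}\cong\widehat{(G/G')}_p$, and for a finitely generated abelian group $A\cong\mathbb{Z}^r\oplus T$ with $T$ finite one has $\widehat{A}_p\cong\mathbb{Z}_p^{\,r}\oplus T_{(p)}$, where $T_{(p)}$ is the $p$-primary part of $T$. An orientable $PD_3$-group is of type $FP$, hence finitely generated, so this applies to $G$; I would write $G/G'\cong\mathbb{Z}^r\oplus T$ throughout, so that $(\widehat{G}_p)^{\mathrm{ab}}\cong\mathbb{Z}_p^{\,r}\oplus T_{(p)}$ and $\widehat{G}_p$ is a finitely generated pro-$p$ group.

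Next I would isolate the only pro-$p$ input that is needed. If $H=\widehat{G}_p$ is a finitely generated pro-$p$ group whose abelianisation $H^{\mathrm{ab}}$ is a cyclic $\mathbb{Z}_p$-module, then $\dim_{\mathbb{F}_p}(H/\Phi(H))=\dim_{\mathbb{F}_p}(H^{\mathrm{ab}}\otimes_{\mathbb{Z}_p}\mathbb{F}_p)\leq 1$, so $H$ is procyclic, hence either $H\cong\mathbb{Z}_p$ or $H$ is a finite cyclic $p$-group; moreover $H\cong\mathbb{Z}_p$ precisely when $H^{\mathrm{ab}}$ is infinite. This observation handles three of the four implications with no further work.

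For the $\widehat{\mathbb{Z}}_p$ equivalence: if $\widehat{G}_p\cong\widehat{\mathbb{Z}}_p$ then $\mathbb{Z}_p^{\,r}\oplus T_{(p)}\cong\mathbb{Z}_p$, forcing $r=1$ and $T_{(p)}=0$, i.e. $G/G'\cong\mathbb{Z}\oplus T$ with $(p,|T|)=1$; conversely this hypothesis gives $(\widehat{G}_p)^{\mathrm{ab}}\cong\mathbb{Z}_p$, which is cyclic and infinite, so $\widehat{G}_p\cong\widehat{\mathbb{Z}}_p$. For the first equivalence, the direction $\Leftarrow$ is the same game: if $G/G'$ is finite with $T_{(p)}$ cyclic then $(\widehat{G}_p)^{\mathrm{ab}}\cong T_{(p)}$ is finite and cyclic, so $\widehat{G}_p$ is procyclic with finite abelianisation, hence finite. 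The only implication that is not formal is $\widehat{G}_p$ finite $\Rightarrow$ $G/G'$ finite with cyclic $p$-torsion: finiteness of $\mathbb{Z}_p^{\,r}\oplus T_{(p)}$ forces $r=0$, so $G/G'$ is finite, and then I would invoke Theorem \ref{refining 2.1}(1) — this is the only place the oddness of $p$ is used, to discard the quaternionic case — to conclude that $\widehat{G}_p$ is cyclic, whence $T_{(p)}\cong(\widehat{G}_p)^{\mathrm{ab}}=\widehat{G}_p$ is cyclic.

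The argument is essentially bookkeeping, so I do not expect a serious obstacle; the one point to handle with care is the dependence on Theorem \ref{refining 2.1}. Only the implication ``$\widehat{G}_p$ finite $\Rightarrow$ cyclic $p$-torsion'' uses it, and precisely this is what fails for $p=2$, since the quaternion $2$-groups (and $\widehat{D_\infty}_2$) have non-cyclic abelianisation.
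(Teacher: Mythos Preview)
Your argument is correct, and it is essentially the argument the paper has in mind: the corollary is stated with a bare \qed\ and no proof, as an immediate consequence of Theorem~\ref{refining 2.1}, and your abelianisation bookkeeping (via $(\widehat{G}_p)^{\mathrm{ab}}\cong\widehat{(G/G')}_p$ and the Burnside basis theorem for pro-$p$ groups) is precisely how one unpacks that. Your identification of the single implication that genuinely needs Theorem~\ref{refining 2.1}(1) and the oddness of $p$ is also accurate.
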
 

The criteria for recognizing when case (b) or (c) occurs are less complete.

\begin{theorem}
\label{cdleq2 implies free}
Let $G$ be an orientable $PD_3$-group. 
If the restriction from $H^3(\widehat{G}_p;\mathbb{F}_p)$ to 
$H^3(G;\mathbb{F}_p)$ is trivial then $\widehat{G}_p$ is a free pro-$p$ group.
In particular,  $cd_p(\widehat{G}_p)\not=2$,
and so $\widehat{G}_p$ cannot be a Demu\v skin group.
\end{theorem}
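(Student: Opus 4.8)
The plan is to prove the contrapositive: if $\widehat{G}_p$ is not a free pro-$p$ group, then the restriction map $\mu^3 : H^3(\widehat{G}_p;\mathbb{F}_p)\to H^3(G;\mathbb{F}_p)$ (in the notation of Lemma \ref{obvious}) is nonzero. First I would record two standard facts about the pro-$p$ completion map $\phi : G\to\widehat{G}_p$ with $\mathbb{F}_p$-coefficients. The map $\mu^1$ is an isomorphism, since $H^1(-;\mathbb{F}_p)=\mathrm{Hom}(-,\mathbb{F}_p)$ and every homomorphism $G\to\mathbb{F}_p$ factors uniquely through $\widehat{G}_p$. The map $\mu^2$ is injective: if $c\in H^2(\widehat{G}_p;\mathbb{F}_p)$ has $\mu^2(c)=0$, represent $c$ by a continuous central extension $1\to\mathbb{F}_p\to E\to\widehat{G}_p\to1$; its pullback along $\phi$ is split, so there is a homomorphism $G\to E$ lifting $\phi$, and as $E$ is pro-$p$ this extends to a continuous homomorphism $\widehat{G}_p\to E$ which, composed with $E\to\widehat{G}_p$, restricts to the identity on the dense subgroup $\phi(G)$ and is therefore a section, whence $c=0$.

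Next I would invoke Poincaré duality for $G$ over $\mathbb{F}_p$: being an orientable $PD_3$-group, $G$ acts trivially on $H^3(G;\mathbb{F}_p G)\cong\mathbb{F}_p$ (automatic when $p=2$, and reduced from the integral orientation when $p$ is odd), so $H^3(G;\mathbb{F}_p)\cong\mathbb{F}_p$ and the cup-product pairing $H^1(G;\mathbb{F}_p)\times H^2(G;\mathbb{F}_p)\to H^3(G;\mathbb{F}_p)$ is non-degenerate. Assume $\widehat{G}_p$ is not free pro-$p$. By a theorem of Serre, a pro-$p$ group $Q$ is free pro-$p$ if and only if $cd_p(Q)\leq1$ if and only if $H^2(Q;\mathbb{F}_p)=0$, so $H^2(\widehat{G}_p;\mathbb{F}_p)\neq0$; fix $0\neq\hat b\in H^2(\widehat{G}_p;\mathbb{F}_p)$. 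By injectivity of $\mu^2$ the class $b:=\mu^2(\hat b)$ is nonzero, hence by non-degeneracy there is $a\in H^1(G;\mathbb{F}_p)$ with $a\cup b\neq0$, and by surjectivity of $\mu^1$ we may write $a=\mu^1(\hat a)$ with $\hat a\in H^1(\widehat{G}_p;\mathbb{F}_p)$. Lemma \ref{obvious} with $i=1$ and $j=2$ then gives
\[
\mu^3(\hat a\cup\hat b)=\mu^1(\hat a)\cup\mu^2(\hat b)=a\cup b\neq0,
\]
so $\mu^3$ is not the zero map, a contradiction; hence $\widehat{G}_p$ is free pro-$p$.

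For the final assertion, observe that if $cd_p(\widehat{G}_p)=2$ then $H^3(\widehat{G}_p;\mathbb{F}_p)=0$, so the restriction map is vacuously trivial, and the statement just proved forces $\widehat{G}_p$ to be free pro-$p$, hence $cd_p(\widehat{G}_p)\leq1$ --- a contradiction. Thus $cd_p(\widehat{G}_p)\neq2$, and since a Demu\v{s}kin group is a pro-$p$ $PD_2$-group and so has $cd_p=2$, $\widehat{G}_p$ cannot be Demu\v{s}kin. The only inputs requiring care beyond this short pairing argument are the two comparison facts for $\mu^1$ and $\mu^2$ and Serre's characterization of free pro-$p$ groups, all standard; the real content is simply the interaction of Lemma \ref{obvious} with Poincaré duality for $G$, so I do not expect a genuine obstacle here.
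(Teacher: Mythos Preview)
Your argument is correct and follows essentially the same route as the paper: both use that $\mu^1$ is an isomorphism and $\mu^2$ is injective, combine this with the non-degeneracy of the cup-product pairing $H^1(G;\mathbb{F}_p)\times H^2(G;\mathbb{F}_p)\to H^3(G;\mathbb{F}_p)$ from Poincar\'e duality, and invoke naturality of cup product (Lemma~\ref{obvious}) together with Serre's characterization of free pro-$p$ groups. The only difference is packaging---you phrase it as a contrapositive and supply more detail on the injectivity of $\mu^2$---but the mathematical content is identical.
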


\begin{proof}
Let $j:G\to\widehat{G}_p$ be the canonical homomorphism.
Then  $H_1(j;\mathbb{F}_p)$ and $H^1(j;\mathbb{F}_p)$ are isomorphisms,
while $H_2(j;\mathbb{F}_p)$ is an epimorphism and 
$H^2(j;\mathbb{F}_p)$ is a monomorphism, for any group $G$
If $\gamma\in{H^2(\widehat{G}_p;\mathbb{F}_p)}$ is non-zero
then there is an $\alpha\in{H^1(\widehat{G}_p;\mathbb{F}_p)}$ such that 
$j^*(\alpha\cup\gamma)=j*\alpha\cup{j^*\gamma}\not=0$, 
by the non-degeneracy of Poincaré duality for $G$.
Hence if $H^3(j)=0$ then $H^2(\widehat{G}_p;\mathbb{F}_p)=0$,
and so $\widehat{G}_p$ is a free pro-$p$ group  \cite[Prop. 21]{Serre}.
\end{proof}

Proofs of much of the following corollary can be found in \cite{Desi-Pavel},
but the arguments here differ in some respects.
Condition (3) is closely related to one of the hypotheses in
\cite[Theorem 3.1]{Desi-Pavel}, 
while (2) and the implication $(4)\Rightarrow(1)$ appear to be new.
We recall  that 
$H^i(\widehat{G}_p;\mathbb{F}_p)\cong\varinjlim{H^i(G/U;\mathbb{F}_p)}$, 
for all $i$, the limit being taken over the directed system of normal subgroups $U$ of $p$-power index in $G$.

\begin{corollary}
Let $G$ be an orientable $PD_3$-group and $p$ be a prime.
Then the following are equivalent.
\begin{enumerate}
\item$\widehat{G}_p$ is a pro-$p$ $PD_3$-group;

\item$G$ has a normal subgroup $U$ of $p$-power index such that
inflation from $H^2(G/U;\mathbb{F}_p)$ to $H^2(G;\mathbb{F}_p)$ 
is an epimorphism, 
and each such $U$ has a proper subgroup $V<U$ which is normal 
and of $p$-power index in $G$ and such that inflation from
$H^3(G/U;\mathbb{F}_p)$ to $H^3(G/V;\mathbb{F}_p)$ has rank $1$;

\item{}every subgroup $U<G$ of $p$-power index has 
a proper subgroup $V<U$ of $p$-power index which is normal in $U$
and such that inflation from
$H^2(U/V;\mathbb{F}_p)$ to $H^2(U;\mathbb{F}_p)$ is an epimorphism;

\item$\widehat{G}_p$ has cohomological $p$-dimension $3$ and 
$\chi(\widehat{G}_p)=0$.
\end{enumerate}
\end{corollary}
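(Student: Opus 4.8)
The plan is to prove $(1)\Leftrightarrow(4)$ directly, to obtain $(1)\Leftrightarrow(3)$ essentially as the criterion of \cite[Theorem~3.1]{Desi-Pavel}, and to deduce $(1)\Leftrightarrow(2)$ from these. Throughout I write $\mu^i_U\colon H^i(\widehat{U}_p;\mathbb{F}_p)\to H^i(U;\mathbb{F}_p)$ for the maps induced by $U\to\widehat{U}_p$, for $U$ a subgroup of $p$-power index in $G$, and use that $\mu^0_U,\mu^1_U$ are isomorphisms, that $\mu^2_U$ is injective, that $H^i(\widehat{U}_p;\mathbb{F}_p)=\varinjlim_V H^i(U/V;\mathbb{F}_p)$, and (repeatedly) Poincaré duality with $\mathbb{F}_p$-coefficients, which is perfect on cup products both for the orientable $PD_3$-group $U$ and — since the dualizing module of a pro-$p$ $PD_n$-group reduces mod $p$ to the trivial module (its action factors through $\mathbb{F}_p^\times$, of order prime to $p$) — for pro-$p$ $PD_3$-groups. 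Given this, $(1)\Rightarrow(4)$ is immediate: $cd_p(\widehat{G}_p)=3$ by definition, and $\dim_{\mathbb{F}_p}H^i(\widehat{G}_p;\mathbb{F}_p)=\dim_{\mathbb{F}_p}H^{3-i}(\widehat{G}_p;\mathbb{F}_p)$ for all $i$ forces $\chi_p(\widehat{G}_p)=0$.

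For $(4)\Rightarrow(1)$: if $cd_p(\widehat{G}_p)=3$ then $\widehat{G}_p$ lies in case (b) or (c) of Theorem~\ref{PD3-}, since case (a) gives $cd_p\in\{0,\infty\}$ and case (d) gives, by Theorem~\ref{refining 2.1}, $cd_p=1$ or $cd_2=\infty$. As $cd_p(\widehat{G}_p)<\infty$ and $\chi_p(\widehat{G}_p)$ is defined, $\widehat{G}_p$ is of type $FP_\infty$ over $\mathbb{F}_p$, hence so is every open subgroup $H$, with $cd_p(H)\le3$ and $\chi_p(H)=[\widehat{G}_p:H]\chi_p(\widehat{G}_p)=0$; then the deficiency of $H$ equals $\dim_{\mathbb{F}_p}H^1(H;\mathbb{F}_p)-\dim_{\mathbb{F}_p}H^2(H;\mathbb{F}_p)=1-\dim_{\mathbb{F}_p}H^3(H;\mathbb{F}_p)\le1$, contradicting case (c). So case (b) holds.

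For the equivalences involving $(2)$ and $(3)$, note that, since the normal subgroups of $p$-power index in $U$ are exactly the normal-in-$U$ subgroups of $p$-power index in $G$ that lie in $U$, the first clause of $(2)$ says precisely that $\mu^2_G$ is onto, and $(3)$ says precisely that each $U<G$ of $p$-power index has $\widehat{U}_p\ne1$ and $\mu^2_U$ an isomorphism. That $(3)\Leftrightarrow(1)$ is essentially \cite[Theorem~3.1]{Desi-Pavel}: $(1)$ forces every $\widehat{U}_p$ to be a nontrivial pro-$p$ $PD_3$-group, so $\dim_{\mathbb{F}_p}H^2(\widehat{U}_p;\mathbb{F}_p)=\dim_{\mathbb{F}_p}H^1(\widehat{U}_p;\mathbb{F}_p)=\dim_{\mathbb{F}_p}H^1(U;\mathbb{F}_p)=\dim_{\mathbb{F}_p}H^2(U;\mathbb{F}_p)$ and the injection $\mu^2_U$ is onto, while conversely one excludes cases (a) and (d) using a $U$ with $\widehat{U}_p=1$ or with $\widehat{U}_p\cong\widehat{\mathbb{Z}}_p$ (so $H^2(\widehat{U}_p;\mathbb{F}_p)=0\ne H^2(U;\mathbb{F}_p)$; the $p=2$, $\widehat{D_\infty}_2$ case handled by one further passage to an index-$2$ subgroup), and then upgrades ``$\mu^2_U$ onto for all $U$'' to $\varprojlim_W H_2(W;\mathbb{F}_p)=0$, whence Theorem~\ref{PD-PD} (using Lemma~\ref{inv-inv} for $H_1$ and the vanishing of the higher $\varprojlim H_i$, the transition maps being multiplication by $p$-power indices) gives case (b). For $(1)\Leftrightarrow(2)$: $(1)\Rightarrow(2)$ follows by the same dimension count with $U=G$ (where $\widehat{G}_p=\overline{G}$, so no subtlety arises), giving $\mu^2_G$ an isomorphism, together with $H^3(\widehat{G}_p;\mathbb{F}_p)\cong\mathbb{F}_p=\varinjlim_W H^3(G/W;\mathbb{F}_p)$, so that a generator is the image of some $H^3(G/U;\mathbb{F}_p)$-class inflating to a nonzero class of some $H^3(G/V;\mathbb{F}_p)$, $V<U$ — the rank-$1$ clause. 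For $(2)\Rightarrow(1)$: the first clause gives $\mu^2_G$ onto, hence, as in Theorem~\ref{cdleq2 implies free}, $H^3(\widehat{G}_p;\mathbb{F}_p)\ne0$ and $cd_p(\widehat{G}_p)\ge3$; the second clause rules out the finite case (a) (the candidate $U$ has no proper $V$) and, with the rank-$1$ information, forces $\dim_{\mathbb{F}_p}H^3(\widehat{G}_p;\mathbb{F}_p)=1$; ruling out (d) as before and running the same upgrade to $\varprojlim_W H_2(W;\mathbb{F}_p)=0$ and Theorem~\ref{PD-PD} gives case (b).

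The step I expect to be the main obstacle is this upgrade: deducing $\varprojlim_W H_2(W;\mathbb{F}_p)=0$ — equivalently, that $\widehat{G}_p$ is a genuine pro-$p$ $PD_3$-group, rather than merely one with the correct mod-$p$ cohomology in low degrees — from surjectivity of the maps $\mu^2_U$. This is where the Poincaré duality of the abstract finite-index subgroups $W$ must be married to the exact sequences relating $H_*(W;\mathbb{F}_p)$, the continuous homology of $\widehat{G}_p$, and the completed chain complexes of $G$, following \cite{Desi-Pavel}; the $p=2$, $\widehat{D_\infty}_2$ subcase arising in the exclusion of (d) is a minor additional point.
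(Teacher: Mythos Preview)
Your $(4)\Rightarrow(1)$ takes a different route from the paper and is a nice simplification: the paper instead shows directly that $\beta_i(\widehat{G}_p)=\beta_i(G)$ for all $i$ via Theorem~\ref{cdleq2 implies free} and Betti-number counting, then appeals to Lemma~\ref{obvious} and the cup-product characterization of pro-$p$ Poincar\'e duality. Your case-elimination, bounding the deficiency of every open subgroup by $1-\beta_3\le1$ from multiplicativity of $\chi$, is more direct and avoids that machinery.

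There is, however, a genuine gap in your $(2)\Rightarrow(1)$. The ``upgrade to $\varprojlim_W H_2(W;\mathbb{F}_p)=0$'' that you propose to re-run --- essentially \cite[Theorem~3.1]{Desi-Pavel} --- requires that $\mu^2_U$ be surjective for \emph{every} subgroup $U$ of $p$-power index; that is precisely condition~(3). Condition~(2) only gives $\mu^2_G$ surjective: it says nothing about $H^2(\widehat{U}_p)\to H^2(U)$ for proper subgroups $U$, and surjectivity at the top level does not propagate downward. So you cannot re-use the machinery from $(3)\Rightarrow(1)$, and you are left with no way to exclude case~(c) or to bound $cd_p(\widehat{G}_p)$ from above (recall the paper explicitly notes that $cd_p(\widehat{G}_p)>3$ is not known to be impossible). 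The paper closes this differently: from the rank-$1$ clause it obtains $\dim H^3(\widehat{G}_p;\mathbb{F}_p)=1$, and since $\mu^2_G$ is an isomorphism Theorem~\ref{cdleq2 implies free} forces $\mu^3_G\ne0$, hence $\mu^i_G$ is an isomorphism for all $i\le3$; it then invokes \cite[\S4.5, Prop.~32]{Serre} together with Lemma~\ref{obvious} to conclude that $\widehat{G}_p$ is a pro-$p$ $PD_3$-group directly from the transported cup-product structure, bypassing the case analysis entirely. You would need either this argument or an independent way to establish $cd_p(\widehat{G}_p)\le3$, after which your own $(4)\Rightarrow(1)$ would finish the job.
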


\begin{proof}
Let $j:G\to\widehat{G}_p$ be the canonical homomorphism.
 If $\widehat{G}_p$ is a pro-$p$ $PD_3$-group  then 
\[
\beta_2(\widehat{G}_p;\mathbb{F}_p)=\beta_1(\widehat{G}_p;\mathbb{F}_p)
=\beta_1(G;\mathbb{F}_p)=\beta_2(G;\mathbb{F}_p),
\]
and so $H^2(j;\mathbb{F}_p)$ is an isomorphism.
Therefore $G$ has a normal subgroup $U$ such that $[G:U]$ 
is a power of $p$,
$H^1(G/U;\mathbb{F}_p)\cong{H^1(G;\mathbb{F}_p)}$ 
and inflation from $H^2(G/U;\mathbb{F}_p)$ to $H^2(G;\mathbb{F}_p)$ 
is onto.
Since $H^1(\widehat{G}_p;\mathbb{F}_p)\not=0$ it follows 
that inflation from $H^3(G/U;\mathbb{F}_p)$ to $H^3(G;\mathbb{F}_p)$
is  an epimorphism, by the non-degeneracy of Poincar\'e duality for $G$.
Since $\varinjlim{H^3}(G/U;\mathbb{F}_p)=
H^3(\widehat{G}_p;\mathbb{F}_p)\cong\mathbb{F}_p$,
there is in turn a subgroup $V<U$ which is normal 
and of $p$-power index in $G$ and such that inflation from
$H^3(G/U;\mathbb{F}_p)$ to $H^3(G/V;\mathbb{F}_p)$ has rank $1$,
i.e, the image is $\mathbb{F}_p$.
Hence $(1)\Rightarrow(2)$.

Conversely, if these conditions hold then $\widehat{G}_p$ is infinite,
since $G$ has subgroups of unbounded $p$-power index,
and $H^2(j;\mathbb{F}_p)$ is an isomorphism.
(In particular, $\widehat{G}_p$ is not a free pro-$p$ group.)
Moreover,  
there is a sequence $U_{i+1}<U_i$ of normal subgroups of $p$-power index such that the inflation from $H^2(G/U_i;\mathbb{F}_p)$ to 
$H^2G;\mathbb{F}_p)$ is an epimorphism and the inflation
from $H^3(G/U_i;\mathbb{F}_p)$ to $H^3(G/U_{i+1};\mathbb{F}_p)$
has rank 1.
This together with Theorem \ref{cdleq2 implies free} implies that
$H^2(j;\mathbb{F}_p)$ is an isomorphism and that
$H^3(\widehat{G}_p;\mathbb{F}_p)\cong\mathbb{F}_p$.
Hence $H^3(j;\mathbb{F}_p)$ is also an isomorphism, by the Theorem,
and so $(2)\Rightarrow(1)$, by \cite[Sect. 4.5, Prop.  32]{Serre},
together with Lemma \ref{obvious}.

A similar argument applies for each subgroup of finite index in $G$, 
since such subgroups are also orientable $PD_3$-groups.
Hence $(1)\Rightarrow(3)$.

If (3) holds and $j_U : U \to \widehat{U}_p$ is the canonical map
then  $H^2(j_U; \mathbb{F}_p)$ is an isomorphism,
so $\beta_2(\widehat{G}_p;\mathbb{F}_p)=\beta_2(G;\mathbb{F}_p)$.
Hence $H_2(j_U; \mathbb{F}_p)$ is also an isomorphism,
and so $(3)\Rightarrow(4)$, by \cite[Theorem 3.1]{Desi-Pavel}.

If (4) holds then $H^3(j;\mathbb{F}_p)\not=0$, 
since $\widehat{G}_p$ has cohomological $p$-dimension $>1$,
by Theorem \ref{cdleq2 implies free}.
Since $\beta_3(\widehat{G}_p;\mathbb{F}_p)\geq1=\beta_3(G;\mathbb{F}_p)$
and $\chi(\widehat{G}_p)=\chi(G)=0$,
we have 
\[
\beta_2(\widehat{G}_p;\mathbb{F}_p)\geq
 \beta_1(\widehat{G}_p;\mathbb{F}_p) = \beta_1({G};\mathbb{F}_p) = 
\beta_2(G;\mathbb{F}_p).
\]
On the other hand $H_2(j; \mathbb{F}_p)$ is surjective, 
hence $\beta_2(\widehat{G}_p;\mathbb{F}_p) \leq \beta_2({G};\mathbb{F}_p)$.
Hence $\beta_2(\widehat{G}_p;\mathbb{F}_p)=
\beta_2(G;\mathbb{F}_p)$ and
$\beta_3(\widehat{G}_p;\mathbb{F}_p)=
\beta_3(G;\mathbb{F}_p) = 1$, and so
 $H_*(j;\mathbb{F}_p)$ and $H^*(j;\mathbb{F}_p)$
are isomorphisms in all degrees. 
It then follows from Lemma \ref{obvious} that $\widehat{G}_p$ 
is a pro-$p$ $PD_3$-group. 
(See also \cite[Prop. 3.2]{W}.) 
Thus $(4)\Rightarrow(1)$.
\end{proof}

We note that whether there is a $PD_3$-group $G$ with 
$3<cd_p(\widehat{G}_p)<\infty$ remains open.

We remark finally that if $G$ is finitely generated
then the order of the torsion subgroup of $G/G'$ is divisible 
by only finitely many primes.
Hence if $G$ is an orientable $PD_3$-group then $\widehat{G}_p$
is finite for all primes $p$ if and only if either $G/G'$ is finite cyclic or $G/G'$ 
is the direct sum of a finite cyclic group with a cyclic 2-group,
and  the 2-lower central series of $G$ terminates after finitely many steps. 
If $\widehat{G}_p$  is of type (b) or (c) for all primes $p$ then 
$\beta_1(G;\mathbb{Q})\geq2$.
Every pro-$p$ completion of $G$ is of type (d)
if and only if $G/G'\cong\mathbb{Z}$.

\section{examples illustrating theorem \ref{PD3-}}
\label{examples}

In this section we shall gives examples of aspherical 3-manifolds whose fundamental groups represent each of the four cases of Theorem \ref{PD3-}.

Examples with $\widehat{G}_p$ cyclic are easily found.
If $M$ is an aspherical  Seifert fibred homology 3-sphere then 
it admits a natural $S^1$-action with finitely many exceptional orbits
with nontrivial finite isotropy subgroups.
If $n$ is prime to the orders of these isotropy subgroups
then the subgroup of $n$th roots of unity in $S^1$ acts freely on $M$, 
with quotient $\overline{M}$, say.
Hence $G=\pi_1(\overline{M})$ is an orientable $PD_3$-group with 
perfect commutator subgroup $G'=G''$ and
$G/G'\cong\mathbb{Z}/n\mathbb{Z}$.
In particular, if $n=p^k$ for some prime $p$ and $k\geq1$ then
 $\widehat{G}_p\cong\mathbb{Z}/p^k\mathbb{Z}$.
(For example, 
if $q,r,s$ are pairwise relative prime and $\frac1q+\frac1r+\frac1s<1$ 
then the Brieskorn manifold
$M(q,r,s)$ is an aspherical $\mathbb{Z}$-homology 3-sphere,
and we may take $p$ relatively prime to $qrs$.)

Since the quaternionic groups $Q(2^n)$ act freely on $S^3$
(for all $n\geq3$),
the Dehn surgery argument of \cite[Theorem 2.6]{CL00} 
may be used to show that these groups act freely on hyperbolic
$\mathbb{Q}$-homology 3-spheres.
By taking the Dehn surgery slope to be a large enough odd number
we may ensure that $H_1(M;\mathbb{Z})$ has odd order,
where $M$ is the resulting $\mathbb{Q}$-homology 3-sphere.
The quotient $\overline{M}=M/(Q(2^n))$ is then an aspherical orientable
3-manifold,
and $G=\pi_1(\overline{M})$ is an orientable $PD_3$-group
with $\widehat{G}_2\cong{Q(2^n)}$.
However we do not have explicit examples of this type.

The simplest example of case (b) of Theorem \ref{PD3-} is $G=\mathbb{Z}^3$,
the fundamental group of the 3-torus.
More generally,
every finitely generated, torsion free nilpotent group
is residually a finite $p$-group for all $p$,
by Theorem 4 of \cite[Chapter 1]{Se}.
Thus the pro-$p$ completion of a nilpotent $PD_3$-group
is a pro-$p$ Poincaré duality group of dimension 3.
(This does not extend to the virtually nilpotent case.
The group $G=\pi_1(M(3_1))$ mentioned below is virtually $\mathbb{Z}^3$,
but $\widehat{G}_p\cong\widehat{\mathbb{Z}}_p$,
for all primes $p$.)

Examples of aspherical 3-manifolds whose fundamental groups illustrate 
cases (c) and (d) may be constructed by surgery on links.
Let $M(L)$ be the closed orientable 3-manifold obtained by 0-framed surgery on the components of an $m$-component link $L$ in $S^3$.
The fundamental group $\pi_1(M(L))$ is the quotient of the link group
$\pi{L}$ by the normal subgroup generated by the longitudes of $L$.
The inclusion of a set of meridians determines a homomorphism 
from the free group $F(m)$ to the link group $\pi{L}$
which induces an isomorphism on abelianization.
If $L$ is a boundary link (in particular, if $m=1$ and so $L$ is a knot)
this homomorphism is split by an epimorphism from $\pi{L}$ to $F(m)$,
and the longitudes of $L$ are in the kernel of any such epimorphism.
The induced homomorphisms between the quotients of the lower central series
$\pi{L}/\gamma_k\pi{L}\to{F(m)}/\gamma_kF(m)$
are isomorphisms, for all $k\geq1$ \cite{St65}.
Hence $J=\pi_1(M(L))$ is an extension of $F(m)$ by 
$\gamma_\omega{J}=\cap_{k\in\mathbb{N}}\gamma_kJ$,
and $\widehat{J}_p\cong\widehat{F(m)}_p$, for all primes $p$.
In our first such example we shall show that $M(L)$ is aspherical;
in the second we show that $M(L)$ must have an aspherical summand
with the requisite properties.

For the first such example we shall let $L$ be the link obtained by 
replacing each component of the Hopf link $2^2_1$ by an untwisted 
Whitehead double \cite[Figure 1.6]{HiAIL}.
(There is a choice involved,  but that is irrelevant for our purposes.)
This is a boundary link, since each component of $L$ bounds a punctured torus
inside a tubular neighbourhood of the corresponding component of the Hopf link.

The components of $L$ are separated by a torus $T\subset{S^3}$.
Each component of $S^3\setminus{T}$ is homeomorphic to $X(Wh)$,
the exterior of the Whitehead link $Wh=5^2_1$.
(The notation $5^2_1$ refers to the tables of knots and links in \cite{Rol}.)
Then $M(L)\cong{N\cup_fN}$, where $f$ is a homeomorphism between the boundaries of the two copies of the 3-manifold $N$ obtained by
attaching a solid torus to the boundary of $X(Wh)$
so that $\partial{D^2}$ is a longitude.
We shall show that $N$ is aspherical and the inclusion of $\pi_1(\partial{N}$
into $\nu=\pi_1(N)$ is injective.
Hence $M(L)$ is aspherical and so $G=\pi_1(M(L))$ is a $PD_3$-group.

The link group $\pi{Wh}$ has a presentation
\[
\langle{a,b,w,x,y}\mid{axa^{-1}=bwb^{-1}=y},~waw^{-1}=xax^{-1}=b,~yxy^{-1}=w\rangle,
\]
and the longitudes for $a$ and $x$ are represented by $\lambda_a=x^{-1}w$ and $\lambda_x=a^{-1}byx^{-1}$, respectively.
We may assume that $N$ is obtained by attaching
$D^2\times{S^1}$ to the component with meridian $x$, 
so that the image of $\lambda_x$ in $\nu=\pi_1(N)$ is trivial.
We have $\lambda_x=a^{-1}bab^{-1}$, since $y=axa^{-1}$.
Hence $\nu$ has the presentation
\[
\langle{a,b,\lambda,x}\mid{axa^{-1}=bx\lambda{b}^{-1}},~
a\lambda=\lambda{a},~xax^{-1}=b, 
\]
\[axa^{-1}xax^{-1}a^{-1}=x\lambda,
~ ab=ba
\rangle.
\]
(Here we have written $\lambda$ for $\lambda_a$ and replaced
$w$ by $x\lambda$ and 
$y$ by $ axa^{-1}$.)
This presentation simplifies to
\[
\langle{a,b,\lambda,x}\mid{a\lambda=\lambda{a}},~ab=ba,~xax^{-1}=b,~
x\lambda{b}^{-1}ax^{-1}=b^{-1}a
\rangle,
\]
since the relation
$\lambda=x^{-1}axa^{-1}xax^{-1}a^{-1}$ follows from the others.
Thus $\nu$ is an HNN extension with base the group 
$\langle{a,b,\lambda}\rangle\cong\mathbb{Z}\times{F(2)}$, 
associated subgroups $\langle{a,\lambda{b}^{-1}}\rangle$
and $\langle{a,b}\rangle$,
and stable letter $x$.
Hence $\nu$ has one end.
The image of $\pi_1(\partial{N})$ is the subgroup $\langle{a,\lambda}\rangle\cong\mathbb{Z}^2$, and so $\partial{N}$ is incompressible in $N$.
It follows from the exact sequence of $(N,\partial{N})$ with coefficients
$\mathbb{Z}[\nu]$, Poincaré-Lefshetz duality and the facts that
$\nu$ has one end and the components of $\partial{N}$ are aspherical 
that $N$ is aspherical.
(See \cite[Lemma 3.1]{Hi20}.)

In our second example the $PD_3$-group $G$ 
does not map onto a nonabelian free group, 
although the pro-$p$ completions of $G$ are free pro-$p$ groups.
Let $L=L_1\cup{L_2}$ be the 2-component link of \cite[Figure 8.1]{HiAIL}.
The homomorphism from $F(2)$ to $\pi{L}$ determined by a pair of meridians 
induces isomorphisms $F(2)/\gamma_nF(2)\cong\pi{L}/\gamma)n\pi{L}$, 
for all $n\geq1$,
and the longitudes of $L$ lie in $\cap_{n\geq1}\gamma_n\pi{L}$.
Hence $\pi_1(M(L))/\gamma_n\pi_1(M(L))\cong\pi{L})/\gamma_n\pi{L}$,
for all $n\geq1$.
The link $L$ is not an homology boundary link: 
there is no epimorphism from $\pi{L}$ to $F(2)$,
and so $\pi_1(M(L))$ does not map onto $F(2)$.
Thus if $M(L)=\sharp_{i=1}^r{M_i}$ is a factorization of $M(L)$ 
as a connected sum of indecomposables all but one of the summands 
must be homology 3-spheres.
We may assume that $M_1$ is not an homology 3-sphere,
and so $H_1(M_1;\mathbb{Z})\cong\mathbb{Z}^2$.
Hence $M_1$ is aspherical, since it is indecomposable, 
and $\pi_1(M_1)$ is infinite and not virtually $\mathbb{Z}$.
(It is likely that $M(L)$ is itself aspherical, 
but we do not need to know this.)
Thus $G=\pi_1(M_1)$ is an orientable $PD_3$-group.
The natural epimorphism from $\pi_1(M)$ to $G$ induces isomorphisms 
$\pi_1(M(L))/\gamma_n\pi_1(M(L))\cong{G}/\gamma_nG$, for all $n\geq1$, 
since the fundamental groups of the other summands of $M(L)$ are all perfect.
Hence $F(2)/\gamma_nF(2)\cong{G}/\gamma_nG$, for all $n\geq1$.
On passing to the $p$-lower central series and pro-$p$ completion,
we conclude that $\widehat{F(2)}_p\cong\widehat{G}_p$, for all primes $p$.

The fundamental groups of orientable closed 3-manifolds
which fibre over non-orientable aspherical surfaces
give further examples of type (c).
The simplest such are the semidirect products
$G=\mathbb{Z}\rtimes_wC$, where $C$ is a $PD_2$-group 
with orientation character $w:C\to\mathbb{Z}^\times$.
Such groups $G$ are orientable $PD_3$-groups.
If $C$ is orientable then $\widehat{G}_p$
is a pro-$p$ $PD_3$-group,
for all primes $p$.
If $C$ is non-orientable then $\widehat{G}_2$ is again a pro-2 $PD_3$-group,
but if $p$ is odd then $\widehat{G}_p\cong\widehat{C}_p$,
by  Lemma \ref{elementary},
and this is a finitely generated free pro-$p$ group,
by Lemma \ref{nonorPD2}.

We have not yet found any examples of case (c) for which
the pro-$p$ completion is not a free pro-$p$ group.

The simplest examples of case (d)  of Theorem \ref{PD3-}
are semidirect products $G=\mathbb{Z}^2\rtimes_A\mathbb{Z}$, 
where $A\in\mathrm{SL}(2,\mathbb{Z})$.
All such groups are solvable $PD_3$-groups.
If $p$ is a prime such that $(\det(A-I),p)=1$ then 
$ \widehat{G}_p \cong\mathbb{Z}_p$,
but if $p$ divides $\det(A-I)$ then $\widehat{G}_p $
is a pro-$p$ $PD_3$-group.
(Note that if one eigenvalue of $A$ is congruent to 1 {\it mod} $(p)$
then so is the other, since they are mutually inverse.)

We may construct further examples of case (d)  
by 0-framed surgery on knots.
If $K$ is a nontrivial fibred knot then $M(K)$ fibres over $S^1$, 
with fibre $F$ a closed orientable surface of genus $\geq1$.
Taking $K$ to be the trefoil knot $3_1$ or the figure-eight knot $4_1$
gives examples with fibre the torus $T$,
and $\pi_1(M(K))\cong\mathbb{Z}^2\rtimes_A\mathbb{Z}$,
where $A=\left(\smallmatrix1&-1\\1&0\endsmallmatrix\right)$ or
$\left(\smallmatrix2&1\\1&1\endsmallmatrix\right)$,
respectively. 
The knots $K=6_2$ and $6_3$ give examples with fibre of genus 2.
Thus if $K$ is a non-trivial fibred knot then $G=\pi_1(M(K))$ 
is a $PD_3$-group which is an extension of $\mathbb{Z}$ 
by the $PD_2$-group $\phi=\pi_1(F)$, and $G'=\phi$.
Hence $G/G'\cong\mathbb{Z}$,
and so the lower central series for $G$ stabilizes at
$\gamma_nG=\gamma_2G=\phi$.
Therefore $\widehat{G}_p\cong\widehat{\mathbb{Z}}_p$, for all primes $p$,
and so $G$ is in case (d).

Examples of dihedral type for case (d) may also be constructed
in terms of knot theory, 
but require a little more work.
Let $K$ be a knot which is carried onto itself by an orientation-reversing involution $h$ of $S^3$ which also reverses the orientation of $K$.
(Such knots are said to be ``strongly -amphicheiral".)
We may assume that $h(X)=X$, where $X$ is the exterior of $K$.
Suppose also that $h$ has just two fixed points.
Then $Fix(h)\subset{K}$, and so $h$ restricts to a fixed-point free involution 
of $X$ which inverts the generator of $H_1(X;\mathbb{Z})$.

Let $X_1$ and $X_2$ be two copies of $X$, 
with a fixed homeomorphism $j:X_1\to{X}_2$,
and let $DX=X_1\cup_{\partial{X}}X_2$ be the double of $X$ along its boundary,
obtained by setting $x=j(x)$ for all $x\in\partial{X}_1$.
Then $H_3(DX;\mathbb{Z})\cong\mathbb{Z}$
and so $X$ is an orientable closed 3-manifold.
We may define an involution $\phi$ by $\phi(x)=j(h(x))$ for $x\in{X_1}$ 
and $\phi(j(x))=h(x)$ for $j(x)\in{X}_2$.
This involution clearly acts freely on $DX$,
and is orientation-preserving, 
so $M=DX/\langle\phi\rangle$ is a closed orientable 3-manifold.
However $\phi$ inverts the generator of 
$H_1(DX;\mathbb{Z})\cong\mathbb{Z}$,
and so $G=\pi_1(M)$ maps onto $D_\infty$,
with kernel $\pi_1(DX)'$.
The abelianization $\pi_1(DX)'/\pi_1(DX)''$ is annihilated by the 
Alexander polynomial $\Delta_K(t)$.

If $K$ is the unknot then $X\cong{S^1}\times{D^2}$, 
$DX\cong{S^1}\times{S^2}$
and $M\cong\mathbb{RP}^3\#\mathbb{RP}^3$, and so $G\cong{D_\infty}$.
If $K$ is non-trivial then $X$ is aspherical and $\partial{X}\to{X}$ is $\pi_1$-injective, and so $DX$ is aspherical.
Hence $M$ is aspherical .
If $\Delta_K(t)\equiv1~mod~(2)$ then 
$\pi_1(DX)'/\pi_1(DX)''$ is a torsion abelian group of odd exponent.
It then follows easily that $\widehat{G}_2\cong\widehat{D_\infty}_2$.
The simplest example of such a knot is $8_3$,
which has  Alexander polynomial $4t^2-9t+4$.

\section{Goodness and Theorem B}
\label{goodA}

In this section we shall consider the profinite completion, 
as well as pro-$p$ completions.
We call $G$ homologically good if for every finite $G$-module $M$ 
the map $H_i ({G},M) \to H_i(\widehat{G},M)$, 
induced by the canonical map  $G \to \widehat{G}$, is an isomorphism.
   
\begin{lemma} \label{LL2} Let $G$ be an abstract group of type $FP_{\infty}$ and  $\mathcal{T}$ be a directed set of finite index normal subgroups in $G$ that defines the profinite topology of $G$. Then the following conditions are equivalent :

a) $G$ is homologically good;

b)   for every finite $G$-module $M$ we have that $  \underset{ U \in {\mathcal T}}{\varprojlim} H_i({U}, M) = 0$  for $i \geq 1$;

c)  for every prime $p$ for  the trivial   $G$-module $\mathbb{F}_p$ we have that $  \underset{ U \in {\mathcal T}}{\varprojlim} H_i({U}, \mathbb{F}_p) = 0$ for $i \geq 1$;

d) $G$ is good.
\end{lemma}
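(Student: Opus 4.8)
The plan is to prove the cycle $(b)\Leftrightarrow(c)$, $(a)\Leftrightarrow(d)$, and $(c)\Rightarrow(a)\Rightarrow(b)$. Throughout, the hypothesis that $G$ has type $FP_\infty$ is essential: it lets us fix a resolution $\mathcal{P}\colon\ \cdots\to P_1\to P_0\to\mathbb{Z}\to 0$ of the trivial module by finitely generated projective $\mathbb{Z}G$-modules, and it guarantees (since $G$, hence every finite-index subgroup, is $FP_\infty$ and the coefficient modules are finite) that all the homology and cohomology groups appearing are finite; as inverse systems of finite abelian groups are Mittag--Leffler, $\varprojlim$ over $\mathcal{T}$ is an exact functor on them. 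For $(b)\Leftrightarrow(c)$: $(b)\Rightarrow(c)$ is the case $M=\mathbb{F}_p$ with trivial action. Conversely, given a finite $G$-module $M$, choose a composition series $0=M_0\subset M_1\subset\cdots\subset M_r=M$ of $\mathbb{Z}G$-submodules; each short exact sequence $0\to M_{t-1}\to M_t\to M_t/M_{t-1}\to0$ gives an inverse system over $U\in\mathcal{T}$ of long exact homology sequences of finite groups, and exactness of $\varprojlim$ on these reduces the claim, by induction on $r$, to the case $M$ simple. A simple finite $\mathbb{Z}G$-module is a finite-dimensional $\mathbb{F}_p[G]$-module $S$ for a single prime $p$, and the action on $S$ factors through a finite quotient, so there is $U_0\in\mathcal{T}$ with $S|_U\cong\mathbb{F}_p^{\dim S}$ a trivial $\mathbb{F}_p[U]$-module for all $U\subseteq U_0$; since $\{U\in\mathcal{T}:U\subseteq U_0\}$ is cofinal in $\mathcal{T}$, one gets $\varprojlim_U H_i(U,S)\cong\bigl(\varprojlim_U H_i(U,\mathbb{F}_p)\bigr)^{\dim S}=0$ for $i\geq1$.

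For $(a)\Leftrightarrow(d)$: write $M^{\vee}=\mathrm{Hom}_{\mathbb{Z}}(M,\mathbb{Q}/\mathbb{Z})$, again a finite $G$-module. Because each $P_i$ is finitely generated projective, there are natural isomorphisms $\mathrm{Hom}_{\mathbb{Z}G}(P_i,M)^{\vee}\cong P_i\otimes_{\mathbb{Z}G}M^{\vee}$, and as Pontryagin duality is exact this yields natural isomorphisms $H^i(G,M)^{\vee}\cong H_i(G,M^{\vee})$, and likewise $H^i(\widehat{G},M)^{\vee}\cong H_i(\widehat{G},M^{\vee})$. Hence the comparison map $H^i(\widehat{G},M)\to H^i(G,M)$ is an isomorphism iff its Pontryagin dual, which one identifies with the comparison map $H_i(G,M^{\vee})\to H_i(\widehat{G},M^{\vee})$, is an isomorphism; and as $M$ runs over all finite $G$-modules, so does $M^{\vee}$. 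Therefore $G$ is good iff $G$ is homologically good.

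For $(c)\Rightarrow(a)$: form $\widehat{\mathcal{P}}=\widehat{\mathbb{Z}}[[\widehat{G}]]\otimes_{\mathbb{Z}G}\mathcal{P}$, a complex of finitely generated projective $\widehat{\mathbb{Z}}[[\widehat{G}]]$-modules augmented over $\widehat{\mathbb{Z}}$. Since $\mathcal{P}$ is finitely generated, $\widehat{\mathbb{Z}}[[\widehat{G}]]\otimes_{\mathbb{Z}G}\mathcal{P}=\varprojlim_{U,n}\bigl((\mathbb{Z}/n)[G/U]\otimes_{\mathbb{Z}G}\mathcal{P}\bigr)=\varprojlim_{U,n}\bigl((\mathbb{Z}/n)\otimes_{\mathbb{Z}U}\mathcal{P}|_U\bigr)$ by Shapiro's lemma in each degree, an inverse limit of complexes of finite abelian groups, so by Mittag--Leffler $H_i(\widehat{\mathcal{P}})\cong\varprojlim_{U,n}H_i(U,\mathbb{Z}/n)$, which vanishes for $i\geq1$ by $(c)$ (equivalently $(b)$) applied to the trivial modules $\mathbb{Z}/p^k$. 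Thus $\widehat{\mathcal{P}}$ is a projective resolution of $\widehat{\mathbb{Z}}$, and for any finite $G$-module $M$ we obtain $H_*(\widehat{G},M)=H_*\bigl(M\otimes_{\widehat{\mathbb{Z}}[[\widehat{G}]]}\widehat{\mathcal{P}}\bigr)=H_*(M\otimes_{\mathbb{Z}G}\mathcal{P})=H_*(G,M)$, the composite being the map induced by $G\to\widehat{G}$; so $G$ is homologically good. Finally $(a)\Rightarrow(b)$: the usual Shapiro argument (or $(a)\Leftrightarrow(d)$ together with its cohomological analogue) shows every finite-index subgroup $U$ of a homologically good group is homologically good, so $H_i(U,M)\cong H_i(\widehat{U},M)=H_i(\overline{U},M)$, where $\overline{U}$ is the closure of $U$ in $\widehat{G}$ (which equals $\widehat{U}$ as $[G:U]<\infty$), compatibly with the transition maps of the system; hence $\varprojlim_U H_i(U,M)\cong\varprojlim_U H_i(\overline{U},M)\cong\varprojlim_U H_i(\widehat{G},\mathrm{Ind}_{\overline{U}}^{\widehat{G}}M)\cong H_i(\widehat{G},\mathrm{Ind}_{1}^{\widehat{G}}M)$, using Shapiro's lemma in $\widehat{G}$ and the fact that continuous homology of $\widehat{G}$ commutes with the inverse limit of the induced modules $\mathrm{Ind}_{\overline{U}}^{\widehat{G}}M$ (a system with surjective transition maps and limit $\mathrm{Ind}_{1}^{\widehat{G}}M$, cf. the proof of Lemma~\ref{inv-inv}); and $H_i(\widehat{G},\mathrm{Ind}_{1}^{\widehat{G}}M)\cong H_i(\{1\},M)=0$ for $i\geq1$.

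I expect the main obstacle to lie in the implication $(b)\Leftrightarrow(a)$: in $(c)\Rightarrow(a)$ one must justify carefully that the completed complex $\widehat{\mathcal{P}}$ is acyclic in positive degrees exactly when $(b)$ holds — this rests on the Mittag--Leffler computation of $H_*(\widehat{\mathcal{P}})$ and is precisely where $FP_\infty$ and the finiteness of the coefficient modules are indispensable — and in $(a)\Rightarrow(b)$ one needs the commutation of continuous homology of $\widehat{G}$ with the relevant inverse limit of coefficient modules; in both directions there is the routine but necessary verification that every isomorphism produced is the canonical comparison map induced by $G\to\widehat{G}$.
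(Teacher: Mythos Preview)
Your proof is correct and shares the paper's core idea---build the completed resolution $\widehat{\mathcal{P}}$ and identify the comparison maps---but the logical route differs. The paper runs the cycle $a)\Rightarrow b)\Rightarrow c)\Rightarrow a),d)$ and closes with $d)\Rightarrow c)$; you instead prove $(b)\Leftrightarrow(c)$ by a composition-series reduction, $(a)\Leftrightarrow(d)$ in one stroke via Pontryagin duality, and then $(c)\Rightarrow(a)\Rightarrow(b)$. Two substantive differences are worth noting. First, for the acyclicity of $\widehat{\mathcal{P}}$ you argue directly by Mittag--Leffler on the finite quotients, whereas the paper works one prime at a time with $\mathbb{Z}_p[[\widehat G]]$ and invokes \cite[Thm.~2.5]{Desi-Pavel} for the vanishing of $Tor_i^{\mathbb{Z}G}(\mathbb{Z}_p[[\widehat G]],\mathbb{Z})$; your version is more self-contained. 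Second, for $(a)\Rightarrow(b)$ the paper's argument is considerably shorter: using that profinite homology commutes with inverse limits of closed subgroups, one has $\varprojlim_U H_i(\widehat U,M)\cong H_i(\bigcap_U\overline U,M)=H_i(1,M)=0$ directly. Your Shapiro/induced-module route reaches the same conclusion but with more moving parts, and the identification $\varprojlim_U\mathrm{Ind}_{\overline U}^{\widehat G}M\cong\mathrm{Ind}_1^{\widehat G}M$ together with the commutation of $H_i(\widehat G,-)$ with that limit deserves a line more justification. Your direct proof of $(a)\Leftrightarrow(d)$ is a clean observation the paper does not isolate.
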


\begin{proof} a) implies b)  Suppose first that $G$ is homologically good. Then
 $$  \underset{ U \in {\mathcal T}}{\varprojlim} H_i({U}, M) \simeq  \underset{ U \in {\mathcal T}}{\varprojlim} H_i(\widehat{U}, M) \simeq H_i( \underset{ U \in {\mathcal T}}{\varprojlim} \widehat{U}, M)=  H_i( \cap_{ U \in {\mathcal T}} \widehat{U}, M)=H_i(1, M) = 0$$
 
 b) implies c) is obvious.
 
 c) implies a) and d)  Let $M$ be a finite $G$-module. By substituting $U$ with a subgroup of finite index we can assume that $U$ acts trivially on $M$. Then by decomposing $M$ as a direct sum of its $p$-primary components we can assume that $M$ is $p$-primary for $p$ prime.
 
 Let $\mathcal{R}$ be a projective resolution of the trivial $\mathbb{Z} G$-module  $\mathbb{Z}$, where all projectives are finitely generated and since by \cite[Thm. 2.5]{Desi-Pavel} after moving from right to left modules  $Tor_i^{\mathbb{Z} G}(\mathbb{Z}_p[[\widehat{G}]], \mathbb{Z}) = 0$ for $i \geq 1$   , we obtain that  $\widehat{\mathcal{R}} = \mathbb{Z}_p[[\widehat{G}]] \otimes_{\mathbb{Z} G} \mathcal{R}$ is exact, hence is a projective resolution of the trivial pro-$p$ $\mathbb{Z}_p[[\widehat{G}]]$-module $\mathbb{Z}_p$.
  Note that for every finite $p$-primary $G$-module $M$ we have that $Hom_{\mathbb{Z} G} ({\mathcal{R}}^{del}, M) \simeq Hom_{\mathbb{Z}_p[[\widehat{G}]]} (\widehat{\mathcal{R}}^{del}, M)$ and $M \otimes_{\mathbb{Z} G} \mathcal{R}^{del} \simeq M \otimes_{\mathbb{Z}_p[[\widehat{G}]]} \widehat{\mathcal{R}}^{del}$,  where  ${\mathcal{R}^{ del}}$, $\widehat{\mathcal{R}}^{del}$  denote the deleted complexes obtained from $\mathcal{R}$ and $\widehat{\mathcal{R}}$ i.e. we substitute the modules $\mathbb{Z}$ and ${\mathbb{Z}}_p$ that are in dimension $-1$ with the zero module. Hence
  $$H^i (G, M) \simeq H^i(Hom_{\mathbb{Z} G} ({\mathcal{R}}^{del}, M)) \simeq H^i( Hom_{\mathbb{Z}_p[[\widehat{G}]]} (\widehat{\mathcal{R}}^{del}, M)) \simeq H^i(\widehat{G}, M)$$
  and the composition of the above isomorphisms is the map $H^i(\widehat{G}, M) \to H^i (G , M)$ induced by the canonical map $G \to \widehat{G}$. Thus  $G$ is good. 
  
Similarly
  $$H_i(G, M) \simeq H_i( M \otimes_{\mathbb{Z} G} \mathcal{R}^{del}) \simeq H_i(M \otimes_{\mathbb{Z}_p[[\widehat{G}]]} \widehat{\mathcal{R}}^{del}) \simeq H_i(\widehat{G}, M)$$
   and the composition of the above isomorphisms is the map $H_i(G, M) \to H_i ( \widehat{G}, M)$ induced by the canonical map $G \to \widehat{G}$. Thus $G$ is homologically good.
   
   d) implies c) Fix a prime $p$ and consider the Pontrygin duality given by $M^* = Hom_{\mathbb{Z}_p} (M, \mathbb{Q}_p/ \mathbb{Z}_p)$ that induces a functorial isomorphism $H^i(G, M^*) \simeq H_i(G, M)^*$ for a finite $G$-module $M$.
Then
$$( \underset{ U \in {\mathcal T}}{\varprojlim} H_i({U}, \mathbb{F}_p))^* \simeq  \underset{ U \in {\mathcal T}}{\varinjlim} H_i({U}, \mathbb{F}_p)^* \simeq
 \underset{ U \in {\mathcal T}}{\varinjlim} H^i({U}, \mathbb{F}_p^*) =   \underset{ U \in {\mathcal T}}{\varinjlim} H^i({U}, \mathbb{F}_p) = 0$$
 where the last equality follows from \cite[Ch. 1, sec. 2, ex.1a)]{Serre}.
\end{proof}

 A group $B$ is called homologically $p$-good 
if for every finite pro-$p$ $\mathbb{Z}_p[[\widehat{B}_p]]$-module $M$ we have that the canonical map $B \to \widehat{B}_p$ induces an isomorphism $H_i({B}, M) \to H_i(\widehat{B}_p, M)$.

\begin{lemma} \label{LL22} Let $G$ be a abstract group of type $FP_{\infty}$, $p$ be a fixed prime number and  $\mathcal{T}$ be a directed set of $p$-power index normal subgroups in $G$ that defines the pro-$p$ topology of $G$. Then the following conditions are equivalent :

a) $G$ is homologically $p$-good; 

b)   for every finite pro-$p$ $\mathbb{Z}_p[[\widehat{G}_p]]$-module $M$ we have that $  \underset{ U \in {\mathcal T}}{\varprojlim} H_i({U}, M) = 0$  for $i \geq 1$;

c)  for  the trivial   $G$-module $\mathbb{F}_p$ we have that $  \underset{ U \in {\mathcal T}}{\varprojlim} H_i({U}, \mathbb{F}_p) = 0$  for $i \geq 1$;

d) $G$ is $p$-good.
\end{lemma}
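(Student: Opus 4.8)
The plan is to follow the proof of Lemma~\ref{LL2} essentially verbatim, replacing the profinite completion $\widehat{G}$ by the pro-$p$ completion $\widehat{G}_p$, the rings $\mathbb{Z}_p[[\widehat{G}]]$ and $\mathbb{F}_p[[\widehat{G}]]$ by $\mathbb{Z}_p[[\widehat{G}_p]]$ and $\mathbb{F}_p[[\widehat{G}_p]]$, and the system of all finite index normal subgroups by the system $\mathcal{T}$ of $p$-power index normal subgroups. Two preliminary points make this translation legitimate: first, every $U\in\mathcal{T}$ is again of type $FP_\infty$ and its closure $\overline{U}$ in $\widehat{G}_p$ is canonically $\widehat{U}_p$, with $\bigcap_{U\in\mathcal{T}}\overline{U}=1$ since $\mathcal{T}$ defines the pro-$p$ topology; second, both $p$-goodness and homological $p$-goodness pass to subgroups of $p$-power index, by Shapiro's lemma in the abstract and pro-$p$ categories together with the observation that coinduction and induction of a finite pro-$p$ module along a $p$-power-index subgroup is again finite, hence $p$-primary, hence pro-$p$.

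With these in place, for $a)\Rightarrow b)$ I would argue as in Lemma~\ref{LL2}: for a finite pro-$p$ $\mathbb{Z}_p[[\widehat{G}_p]]$-module $M$, restriction makes $M$ a finite pro-$p$ $\mathbb{Z}_p[[\widehat{U}_p]]$-module for each $U\in\mathcal{T}$, homological $p$-goodness of $U$ gives $H_i(U,M)\simeq H_i(\overline{U},M)$, and continuity of pro-$p$ homology in fixed finite coefficients gives $\varprojlim_{U\in\mathcal{T}}H_i(U,M)\simeq H_i(\varprojlim_{U\in\mathcal{T}}\overline{U},M)=H_i(1,M)=0$ for $i\geq1$. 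The implication $b)\Rightarrow c)$ is immediate by taking $M=\mathbb{F}_p$ with trivial action. For $d)\Rightarrow c)$ I would dualize as in Lemma~\ref{LL2}: each $H_i(U,\mathbb{F}_p)$ is finite since $U$ is $FP_\infty$, so $(\varprojlim_{U\in\mathcal{T}}H_i(U,\mathbb{F}_p))^{*}\simeq\varinjlim_{U\in\mathcal{T}}H^i(U,\mathbb{F}_p)$, which by $p$-goodness of each $U\in\mathcal{T}$ equals $\varinjlim_{U\in\mathcal{T}}H^i(\overline{U},\mathbb{F}_p)$; this vanishes for $i\geq1$ by \cite[Ch. 1, sec. 2, ex. 1a)]{Serre}, since $\{\overline{U}:U\in\mathcal{T}\}$ is cofinal among the open subgroups of $\widehat{G}_p$.

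The substantive implication is $c)\Rightarrow a),d)$, which reproduces the central computation of Lemma~\ref{LL2}. Fix a projective resolution $\mathcal{R}\to\mathbb{Z}$ of the trivial $\mathbb{Z} G$-module with all projectives finitely generated. By \cite[Lemma 2.1]{Desi-Pavel}, $H_i(\mathbb{F}_p[[\widehat{G}_p]]\otimes_{\mathbb{Z} G}\mathcal{R})\simeq\varprojlim_{U\in\mathcal{T}}H_i(U,\mathbb{F}_p)$, which is $0$ for $i\geq1$ by $c)$, i.e.\ $Tor_i^{\mathbb{Z} G}(\mathbb{F}_p[[\widehat{G}_p]],\mathbb{Z})=0$ for $i\geq1$. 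An induction on $n$ along the short exact sequences $0\to\mathbb{F}_p[[\widehat{G}_p]]\to\mathbb{Z}/p^{n+1}[[\widehat{G}_p]]\to\mathbb{Z}/p^{n}[[\widehat{G}_p]]\to0$ then gives $Tor_i^{\mathbb{Z} G}(\mathbb{Z}/p^{n}[[\widehat{G}_p]],\mathbb{Z})=0$ for $i\geq1$ and all $n$, and passing to $\varprojlim_n$ (which commutes with $\mathcal{R}\otimes_{\mathbb{Z} G}(-)$ because the terms of $\mathcal{R}$ are finitely generated free, the relevant towers being Mittag--Leffler) gives $Tor_i^{\mathbb{Z} G}(\mathbb{Z}_p[[\widehat{G}_p]],\mathbb{Z})=0$ for $i\geq1$. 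Hence $\widehat{\mathcal{R}}_p=\mathbb{Z}_p[[\widehat{G}_p]]\otimes_{\mathbb{Z} G}\mathcal{R}$ is a resolution of the trivial pro-$p$ $\mathbb{Z}_p[[\widehat{G}_p]]$-module $\mathbb{Z}_p$ by finitely generated projectives. For a finite pro-$p$ (hence $p$-primary) $\mathbb{Z}_p[[\widehat{G}_p]]$-module $M$, the natural identifications $Hom_{\mathbb{Z} G}(\mathcal{R}^{del},M)\simeq Hom_{\mathbb{Z}_p[[\widehat{G}_p]]}(\widehat{\mathcal{R}}_p^{del},M)$ and $M\otimes_{\mathbb{Z} G}\mathcal{R}^{del}\simeq M\otimes_{\mathbb{Z}_p[[\widehat{G}_p]]}\widehat{\mathcal{R}}_p^{del}$ then identify $H^i(G,M)$ with $H^i(\widehat{G}_p,M)$ and $H_i(G,M)$ with $H_i(\widehat{G}_p,M)$, compatibly with the maps induced by $G\to\widehat{G}_p$; so $G$ is both $p$-good and homologically $p$-good. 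Together with $a)\Rightarrow b)$, $b)\Rightarrow c)$ and $d)\Rightarrow c)$ above, all four conditions are equivalent.

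The step I expect to be the main obstacle is precisely this upgrade inside $c)\Rightarrow a),d)$ from $\mathbb{F}_p$-coefficients to the assertion that $\widehat{\mathcal{R}}_p$ is a genuine $\mathbb{Z}_p[[\widehat{G}_p]]$-resolution. In Lemma~\ref{LL2} the corresponding fact over $\mathbb{Z}_p[[\widehat{G}]]$ was imported from \cite[Thm. 2.5]{Desi-Pavel}; for the pro-$p$ completion there is no such ready-made statement, so one has to carry out the inductive and inverse-limit argument above and verify at each stage that the towers of $Tor$-groups and tensored complexes are Mittag--Leffler, so that homology commutes with $\varprojlim_n$. Every other step is a direct transcription of the profinite argument of Lemma~\ref{LL2}.
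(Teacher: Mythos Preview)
Your proposal is correct and matches the paper's approach: the paper's entire proof of Lemma~\ref{LL22} is the sentence ``The proof is an obvious modification of the proof of Lemma~\ref{LL2},'' and what you have written is exactly that modification spelled out. The only place you diverge in emphasis is the step $c)\Rightarrow a),d)$: where Lemma~\ref{LL2} invokes \cite[Thm.~2.5]{Desi-Pavel} for the vanishing of $Tor_i^{\mathbb{Z} G}(\mathbb{Z}_p[[\widehat{G}]],\mathbb{Z})$, you instead rebuild this vanishing for $\mathbb{Z}_p[[\widehat{G}_p]]$ by hand via \cite[Lemma~2.1]{Desi-Pavel}, induction on $\mathbb{Z}/p^n$, and a Mittag--Leffler passage to the limit. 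That detour is unnecessary---\cite[Thm.~2.5]{Desi-Pavel} is stated for completions with respect to an arbitrary directed set of finite-index normal subgroups and therefore applies verbatim to the pro-$p$ case---but it is a legitimate and self-contained substitute for the citation.
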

\begin{proof}
The proof is an obvious modification of the proof of Lemma \ref{LL2}.
\end{proof}

\begin{lemma}  \label{p-good} Let $ 1 \to A \to B \to C \to 1$ be a short exact sequence of abstract groups such that both $A$ and $C$ are $p$-good, $H^i(A,M)$ is finite for any finite pro-$p$ 
$\mathbb{F}_p[[\widehat{B}_p]]$-module $M$
and $ 1 \to \widehat{A}_p \to \widehat{B}_p \to \widehat{C}_p \to 1$ is exact.  Then $B$ is $p$-good. \end{lemma}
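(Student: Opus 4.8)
The plan is to compare the Lyndon--Hochschild--Serre spectral sequences of the abstract extension $1\to A\to B\to C\to 1$ and of the pro-$p$ extension $1\to\widehat{A}_p\to\widehat{B}_p\to\widehat{C}_p\to 1$. Fix a finite pro-$p$ $\mathbb{Z}_p[[\widehat{B}_p]]$-module $M$; we must show that the canonical map $B\to\widehat{B}_p$ induces isomorphisms $H^n(\widehat{B}_p,M)\to H^n(B,M)$ for all $n$. First I would reduce to the case $pM=0$: the class of finite pro-$p$ $\mathbb{Z}_p[[\widehat{B}_p]]$-modules $M$ for which the conclusion holds in every degree is closed under extensions, by the five lemma applied to the long exact cohomology sequences attached to a short exact sequence of coefficient modules (these exist and are compatible in both the abstract and the profinite setting); and every finite pro-$p$ $\mathbb{Z}_p[[\widehat{B}_p]]$-module is a successive extension of $\mathbb{F}_p[[\widehat{B}_p]]$-modules (filter by the subgroups $p^kM$). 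So it suffices to treat $M$ with $pM=0$, which is exactly the case to which the finiteness hypothesis on $H^i(A,M)$ applies.

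Since $A=\ker(B\to C)$, the composite $A\to B\to\widehat{B}_p$ lands in $\ker(\widehat{B}_p\to\widehat{C}_p)=\widehat{A}_p$, and the canonical maps $A\to\widehat{A}_p$, $B\to\widehat{B}_p$, $C\to\widehat{C}_p$ assemble into a morphism of group extensions, compatibly with the $B$-module structure on $M$ obtained by pullback from $\widehat{B}_p$. This induces a morphism of LHS spectral sequences whose source is
\[
\widehat{E}_2^{i,j}=H^i(\widehat{C}_p,H^j(\widehat{A}_p,M))\ \Longrightarrow\ H^{i+j}(\widehat{B}_p,M),
\]
whose target is
\[
E_2^{i,j}=H^i(C,H^j(A,M))\ \Longrightarrow\ H^{i+j}(B,M),
\]
and which on abutments is the map induced by $B\to\widehat{B}_p$. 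Restricted to $\widehat{A}_p$, $M$ is a finite pro-$p$ $\mathbb{Z}_p[[\widehat{A}_p]]$-module, so $p$-goodness of $A$ gives isomorphisms $H^j(\widehat{A}_p,M)\cong H^j(A,M)$, and these groups are finite by hypothesis. As $\widehat{A}_p$ (resp.\ $A$) acts trivially on its own cohomology, $H^j(\widehat{A}_p,M)$ is naturally a finite pro-$p$ $\mathbb{Z}_p[[\widehat{C}_p]]$-module and $H^j(A,M)$ a $C$-module, and the above isomorphism is equivariant; hence $H^j(A,M)$, with its conjugation $C$-action, is the restriction along $C\to\widehat{C}_p$ of that finite pro-$p$ $\mathbb{Z}_p[[\widehat{C}_p]]$-module. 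Applying $p$-goodness of $C$ to this module shows that $\widehat{E}_2^{i,j}\to E_2^{i,j}$ is an isomorphism for all $i,j$. Both spectral sequences lie in the first quadrant, hence converge, so the standard comparison theorem for spectral sequences (see \cite{McCleary}) yields an isomorphism on every page and on $E_\infty$, and therefore $H^n(\widehat{B}_p,M)\to H^n(B,M)$ is an isomorphism for all $n$. Thus $B$ is $p$-good.

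The delicate point — and the main obstacle — is the equivariance used to identify the $E_2$-terms: one must check that the isomorphism $H^j(\widehat{A}_p,M)\cong H^j(A,M)$ coming from $p$-goodness of $A$ intertwines the conjugation actions of $\widehat{C}_p$ and of $C$, so that the $E_2$-comparison is a genuine morphism of spectral sequences and not merely a degreewise isomorphism of abelian groups. This is a naturality statement for the $p$-goodness isomorphism with respect to the automorphisms of the pair $(A,M)$ induced by conjugation by elements of $B$, and I would verify it by tracking the relevant maps at the cochain level, in the spirit of the computation behind Lemma \ref{obvious}.
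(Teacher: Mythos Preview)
Your argument is correct and follows essentially the same route as the paper: compare the two Lyndon--Hochschild--Serre spectral sequences via the morphism of extensions, use $p$-goodness of $A$ and $C$ to get an isomorphism on $E_2$-pages, and invoke the comparison theorem. Your preliminary reduction to $pM=0$ and your explicit attention to the equivariance of the isomorphism $H^j(\widehat{A}_p,M)\cong H^j(A,M)$ are points the paper passes over silently, but they are refinements within the same proof rather than a different approach.
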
 

\begin{proof} Consider the LHS spectral sequence $\widehat{E}_{i,j}^2 = H^i(\widehat{C}_p, H^j(\widehat{A}_p, M))$ that converges to $H^{i+j} (\widehat{B}_p, M)$, where $M$ is a finite pro-$p$ $\mathbb{Z}_p[[\widehat{B}_p]]$-module.
Consider the LHS spectral sequence ${E}_{i,j}^2 = H ^i({C}, H ^j({A}, M))$ that converges to $H^{i+j} ({B}, M)$. By the $p$-goodness of $A$ and $C$ the map $B \to \widehat{B}_p$ induces an isomorphism $ \widehat{E}_{i,j}^2 \to E_{i,j}^2$. By the naturality of the spectral sequence we conclude by induction on $k \geq 2$ that  the map $B \to \widehat{B}_p$ induces an isomorphism $\widehat{E}_{i,j}^k \to E_{i,j}^k $, hence an isomorphism  $\widehat{E}_{i,j}^{\infty} \to E_{i,j}^{\infty}$. Then the convergence of the spectral sequence implies that  the map $B \to \widehat{B}_p$ induces an isomorphism $ H^{i+j}(\widehat{B}_p, M) \to H^{i+j} (B, M)$.
\end{proof}

\begin{lemma} \label{LL1}
Any orientable surface group is good and $p$-good.
\end{lemma}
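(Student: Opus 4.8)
The plan is to prove goodness and $p$-goodness of an orientable surface group $S$ simultaneously, by exhibiting an explicit directed family of finite-index subgroups over which the homology with $\mathbb{F}_p$-coefficients vanishes in the limit, and then invoking the criteria already established. Concretely, for goodness I would let $\mathcal{T}$ be the set of \emph{all} finite-index normal subgroups of $S$, and for $p$-goodness I would let $\mathcal{T}$ be the set of all normal subgroups of $p$-power index; in either case every $U\in\mathcal{T}$ is again an orientable surface group (a finite cover of a closed orientable surface is a closed orientable surface), and $S$ is of type $FP_\infty$ since it is a $PD_2$-group. So by Lemma~\ref{LL2} (for goodness) and Lemma~\ref{LL22} (for $p$-goodness) it suffices to check, for every prime $\ell$ (resp. for the fixed prime $p$), that
\[
\underset{U\in\mathcal{T}}{\varprojlim}\,H_i(U,\mathbb{F}_\ell)=0\quad\text{for all }i\geq1.
\]

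First I would dispose of the easy degrees. For $i\geq3$ every $H_i(U,\mathbb{F}_\ell)$ vanishes because $U$ is a $PD_2$-group, so $cd(U)=2$. For $i=1$ the limit vanishes by Lemma~\ref{inv-inv} in the $p$-power-index case, and the same argument (closures in $\widehat{S}$ rather than $\widehat{S}_p$, using $\bigcap_{U}\overline{U}=1$) handles the profinite case. The heart of the matter is therefore degree $i=2$. Here I would use Poincaré duality: for an orientable $PD_2$-group $U$, $H_2(U,\mathbb{F}_\ell)\cong\mathbb{F}_\ell$ and the relevant transition map $H_2(U_1,\mathbb{F}_\ell)\to H_2(U_2,\mathbb{F}_\ell)$, for $U_1\leq U_2$ of finite index, is exactly the map studied in Lemma~\ref{H2} when the index is an $\ell$-power (or more generally one reduces to a chain of prime-index inclusions). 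Lemma~\ref{H2} says this map is zero whenever $U_1$ is subnormal of $\ell$-power index in $U_2$. So given any $U_2\in\mathcal{T}$, I can find $U_1\subseteq U_2$ inside $\mathcal{T}$ with $[U_2:U_1]$ divisible by $\ell$ and $U_1$ subnormal in $U_2$ — e.g. take $U_1$ to be (the intersection with $U_2$ of) a suitable normal subgroup realizing a surjection $U_2\to\mathbb{Z}/\ell\mathbb{Z}$, which exists since $H^1(U_2,\mathbb{F}_\ell)\neq0$ for a surface group — and then the transition map $H_2(U_1,\mathbb{F}_\ell)\to H_2(U_2,\mathbb{F}_\ell)$ is zero. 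Since the system is directed and each term is finite, a cofinal family of zero maps forces $\varprojlim_U H_2(U,\mathbb{F}_\ell)=0$.

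Having verified condition (c) of both Lemma~\ref{LL2} and Lemma~\ref{LL22}, I conclude that $S$ is good and $p$-good, completing the proof. The main obstacle is really the bookkeeping around Lemma~\ref{H2}: one must be a little careful that the chosen $U_1$ genuinely lies in the directed set $\mathcal{T}$ (normal \emph{in $S$}, not merely in $U_2$) and that the index to which Lemma~\ref{H2} applies is an $\ell$-power so that the subnormal chain of prime-index steps exists; in the $p$-good case this is automatic since $\mathcal{T}$ consists of $p$-power-index subgroups, while in the profinite case one localizes one prime $\ell$ at a time and intersects with a normal $\ell$-power-index subgroup of $S$ contained in $U_2$, which is again in $\mathcal{T}$. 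Everything else is a direct appeal to results already in the excerpt.
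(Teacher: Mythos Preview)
Your argument is correct and is precisely the elementary proof the paper alludes to. The paper's printed proof simply cites \cite[Thm.~1.3]{G-J-Z} for goodness and \cite[Thm.~A]{D0} for $p$-goodness, and then remarks that both statements admit elementary proofs using the lemmas already in the paper; your write-up \emph{is} that elementary proof, combining Lemmas~\ref{LL2}/\ref{LL22} with Lemma~\ref{inv-inv} for $i=1$ and Lemma~\ref{H2} for $i=2$. One small simplification for the bookkeeping you flag at the end: given $U_2\in\mathcal{T}$, take $U_1=[U_2,U_2]U_2^{\ell}$; this is characteristic in $U_2$ and hence normal in $S$, has $\ell$-power index in $U_2$, and Lemma~\ref{H2} gives $H_2(U_1,\mathbb{F}_\ell)\to H_2(U_2,\mathbb{F}_\ell)$ is zero, so no further intersecting is needed.
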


\begin{proof}The goodness is a particular case of \cite[Thm. 1.3]{G-J-Z} and the $p$-goodness is a particular case of \cite[Thm. A]{D0}. Alternatively both statements have elementary proofs using the results from the previous and this section. 
\end{proof}

In particular, the pro-$p$ completion of an orientable $PD_2$-group 
is a pro-$p$ $PD_2$-group.
The situation is somewhat different in the non-orientable case.

\begin{lemma}
\label{nonorPD2}
Let $C$ be a non-orientable $PD_2$-group.
Then $\widehat{C}_2$ is a pro-$2$ $PD_2$-group
but $\widehat{C}_p$ is a free pro-$p$ group, for every odd prime $p$.
\end{lemma}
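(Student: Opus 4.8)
The plan is to analyze the two cases $p=2$ and $p$ odd separately, using the orientation character $w\colon C\to\mathbb{Z}/2\mathbb{Z}$ whose kernel $C^+$ is an orientable $PD_2$-group of index $2$ in $C$.

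\textbf{The case $p=2$.} First I would recall that $\widehat{C}_2$ is a pro-$2$ group, so it suffices to verify that $C$ is $2$-good and that $\widehat{C}_2$ satisfies Poincar\'e duality in dimension $2$; by Lemma \ref{LL22} it is enough to check condition (c), i.e.\ that $\varprojlim_{U\in\mathcal{T}} H_i(U,\mathbb{F}_2)=0$ for $i\geq 1$, where $\mathcal{T}$ runs over the normal subgroups of $2$-power index. For $i=1$ this is Lemma \ref{inv-inv}. For $i=2$, every such $U$ is a $PD_2$-group (orientable or not, being finite-index in $C$), so $H_2(U,\mathbb{F}_2)\cong\mathbb{F}_2$; the point is to show the inverse system eventually has zero transition maps, i.e.\ for each $U$ there is a normal subgroup $V\leq U$ of $2$-power index in $C$ with $H_2(V,\mathbb{F}_2)\to H_2(U,\mathbb{F}_2)$ zero. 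When $U$ is orientable this is exactly Lemma \ref{H2} with $V=U'U^2$. When $U$ is non-orientable, I would first pass to its orientable subgroup $U^+$ of index $2$ (which is normal of $2$-power index in $C$), and combine Lemma \ref{H2} applied to a further subgroup of $U^+$ with the observation that $H_2(U^+,\mathbb{F}_2)\to H_2(U,\mathbb{F}_2)$ is already zero (transfer argument, since the composite with itself is multiplication by the index $2\equiv 0$). Once goodness is established, $H^i(\widehat{C}_2,\mathbb{F}_2)\cong H^i(C,\mathbb{F}_2)$ for all $i$, and the $\mathbb{F}_2$-Poincar\'e duality for $C$ together with Lemma \ref{obvious} transports cup-product non-degeneracy to $\widehat{C}_2$, identifying it as a pro-$2$ $PD_2$-group (as in the argument for Theorem \ref{cdleq2 implies free}).

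\textbf{The case $p$ odd.} Here the key is that the orientable subgroup $C^+$ has index $2$ in $C$, which is invertible mod $p$. I would argue that the pro-$p$ completion does not see any index-$2$ subgroup: every normal subgroup of $p$-power index in $C$ automatically contains no element mapping nontrivially to $\mathbb{Z}/2\mathbb{Z}$ only if it is contained in $C^+$ --- but in fact, since $[C:C^+]=2$ is prime to $p$, the restriction map realizes $\widehat{C}_p$ via the pro-$p$ topology of $C^+$. More precisely, I expect the cleanest route is: $H^i(\widehat{C}_p,\mathbb{F}_p)\cong\varinjlim H^i(C/U,\mathbb{F}_p)$, and for $i=2$ one shows this vanishes. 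Indeed $H^2(C,\mathbb{F}_p)\cong\mathbb{F}_p$ by Poincar\'e duality over $\mathbb{F}_p$ (valid since $C$ is a $PD_2$-group), but the generator is $w\cup w$ or pairs with the fundamental class twisted by $w$; the restriction of $H^2(\widehat{C}_p,\mathbb{F}_p)$ to $H^2(C,\mathbb{F}_p)$ lands in the $w$-invariant part, which for $p$ odd forces the relevant class to vanish when restricted to the orientable double cover, hence globally the inflation from any $p$-group quotient into $H^2(C,\mathbb{F}_p)$ is zero. Therefore $H^2(\widehat{C}_p,\mathbb{F}_p)=0$, and by \cite[Prop.\ 21]{Serre} (as used in Theorem \ref{cdleq2 implies free}) $\widehat{C}_p$ is a free pro-$p$ group. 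Finite generation follows since $C$ is finitely generated.

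\textbf{Main obstacle.} The delicate point is the odd-$p$ case: one must argue carefully that the $w$-twisting kills the top cohomology after pushing to $p$-power quotients. I would make this precise by noting that any normal subgroup $U\trianglelefteq C$ of $p$-power index lies inside $C^+$ (since $C/C^+\cong\mathbb{Z}/2\mathbb{Z}$ has no nontrivial $p$-power quotient, the composite $U\hookrightarrow C\twoheadrightarrow\mathbb{Z}/2\mathbb{Z}$... actually $U$ need not be in $C^+$, but $UC^+=C$ forces $U\cap C^+$ to have index $2$ in $U$), and then using that $H^2(C/U,\mathbb{F}_p)\to H^2(C,\mathbb{F}_p)$ factors, via the five-term sequence for $1\to C^+/(U\cap C^+)\to C/(U\cap C^+)\to \mathbb{Z}/2\to 1$ combined with the transfer for the index-$2$ pair $(C, C^+)$, through a group on which the relevant class dies; alternatively one invokes that $C^+$ is $p$-good with free... no, $C^+$ has pro-$p$ completion a Demu\v skin group, so one cannot simply say $\widehat{C^+}_p$ is free. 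The safe argument is the cup-product one: pick $x\in H^1(C,\mathbb{F}_p)$ in the image of $H^1(\widehat{C}_p,\mathbb{F}_p)$; since $p$ is odd and $H^1(C,\mathbb{F}_p)$ factors through $C/C^+\otimes\mathbb{F}_p=0$ in the $w$-direction, every such $x$ restricts nontrivially to $C^+$ only through classes pulled back along $C\to C/\langle$torsion$\rangle$, and $x\cup y$ for $y$ the Poincar\'e dual partner pairs to zero because $y$ is forced to be $w$-twisted; hence $H^2(\widehat{C}_p,\mathbb{F}_p)=0$ directly. I would present this as the heart of the proof and keep the $p=2$ part brief by citing Lemmas \ref{LL1}-style methods.
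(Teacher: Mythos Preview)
Your $p=2$ argument is correct in spirit but much more laborious than necessary. The paper dispatches this case in two lines: since $C^+=\ker w$ has index $2$ in $C$, its closure in $\widehat{C}_2$ is open of index $2$ and is isomorphic to $\widehat{C^+}_2$; the latter is a pro-$2$ $PD_2$-group because $C^+$ is orientable (Lemma~\ref{LL1}), and being a pro-$p$ $PD_2$-group passes to finite-index overgroups. There is no need to verify $2$-goodness of $C$ directly.

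Your odd-$p$ argument has a genuine error at its core. You assert that $H^2(C,\mathbb{F}_p)\cong\mathbb{F}_p$ ``by Poincar\'e duality over $\mathbb{F}_p$'', but this is false: for a \emph{non-orientable} $PD_2$-group, duality twists by the orientation character, so
\[
H^2(C;\mathbb{F}_p)\cong H_0(C;\mathbb{F}_p^{\,w})=\mathbb{F}_p/(1-(-1))\mathbb{F}_p=\mathbb{F}_p/2\mathbb{F}_p=0
\]
for $p$ odd. Once you have $H_2(C;\mathbb{F}_p)=0$ (equivalently $H^2(C;\mathbb{F}_p)=0$), everything you struggle with in your ``Main obstacle'' paragraph evaporates: since $H^2(\widehat{C}_p;\mathbb{F}_p)\hookrightarrow H^2(C;\mathbb{F}_p)=0$, the pro-$p$ group $\widehat{C}_p$ is free by \cite[Prop.~21]{Serre}, exactly as you wanted. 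All the contortions about $w$-twisting, transfer through $C^+$, and five-term sequences are attempts to recover from the wrong starting value of $H^2$.

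The paper packages the same key fact slightly differently: knowing $H_1(C;\mathbb{F}_p)\cong\mathbb{F}_p^r$ and $H_2(C;\mathbb{F}_p)=0$, it picks a map $f\colon F(r)\to C$ inducing an isomorphism on $H_1(\,\cdot\,;\mathbb{F}_p)$; since $H_2(f;\mathbb{F}_p)$ is then automatically an isomorphism (both sides vanish), Stallings' theorem \cite{St65} gives isomorphisms on all $p$-lower-central quotients, hence $\widehat{C}_p\cong\widehat{F(r)}_p$. This yields the slightly stronger conclusion that $\widehat{C}_p$ is free of the expected rank, but the essential input is the same vanishing you missed.
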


\begin{proof}
Let $C^+$ be the kernel of the orientation character
$w:C\to\mathbb{Z}^\times$.
Then  $[C:C^+]=2$, since $C$ is non-orientable, 
and so $\widehat{C^+}_2$ has index 2 in $\widehat{C}_2$.
Since $C^+$ is an orientable $PD_2$-group 
it follows that $\widehat{C}_2$ is a pro-$2$ Poincaré  duality group of dimension $2$.

Assume now that $p$ is an odd prime,.
Then $H_1(C;\mathbb{F}_p)\cong\mathbb{F}_p^r$, for some $r\geq1$,
and $H_2(C;\mathbb{F}_p)=0$.
Let $F$ be the free group of rank $r$ and $f:F\to{C}$ a homomorphism 
such that $H_1(f;\mathbb{F}_p)$ is an isomorphism.
Since $H_2(f;\mathbb{F}_p)$ is also an isomorphism,
$f$ induces isomorphisms on all corresponding quotients 
of the $p$-lower central series of these groups \cite{St65}.
Hence $\widehat{F}_p\cong\widehat{C}_p$.
\end{proof}

We return briefly to consider profinite completion, 
rather than pro-$p$ completions.

\begin{proposition} \label{profinite}
Let $1 \to A \to B \to C \to 1$ be a short exact sequence of groups such that 
$A$  is an orientable surface group,  
$B$ is an orientable $PD_{2 + m}$-group and $C$  is a good $PD_m$ group.
Then $\widehat{B}$ is a strong  orientable profinite $PD_{2+m}$-group at $p$. 
In particular, if $C$ is an orientable surface group (so $m = 2$) then $\widehat{B}$ is a strong orientable profinite $PD_4$-group.
\end{proposition}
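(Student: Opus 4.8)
\emph{Proof plan.} Fix a prime $p$. The plan is to apply Theorem \ref{PD-PD} to $G=B$, taking $G_0=B$ (orientable by hypothesis, so one may take $U_0=B$) and letting $\mathcal{C}$ be the directed set of \emph{all} finite-index normal subgroups of $B$, so that $\widehat{B}_{\mathcal C}=\widehat{B}$. Since $B$ is a $PD_{2+m}$-group it is of type $FP_\infty$, so by Lemma \ref{LL2} it suffices to prove $\varprojlim_{U\in\mathcal C}H_i(U,\mathbb F_p)=0$ for all $i\geq1$ (this will also show $B$ is good). For $U\in\mathcal C$, intersecting the given extension with $U$ gives $1\to U\cap A\to U\to V_U\to1$, where $V_U=U/(U\cap A)\cong UA/A$ is a finite-index subgroup of $C$, hence a good $PD_m$-group, and $U\cap A$ is a finite-index subgroup of $A$ which is normal in $B$, hence again an orientable surface group. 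I would feed the associated homology Lyndon--Hochschild--Serre spectral sequences $E^2_{i,j}(U)=H_i\bigl(V_U,H_j(U\cap A,\mathbb F_p)\bigr)\Rightarrow H_{i+j}(U,\mathbb F_p)$ into an inverse limit over $\mathcal C$: all groups involved are finite, so the system is Mittag--Leffler, $\varprojlim$ is exact on it, $\varprojlim_{U}E^\infty_{i,j}(U)$ is a subquotient of $\varprojlim_{U}E^2_{i,j}(U)$, and $\varprojlim_{U}H_n(U,\mathbb F_p)$ is built by extensions from the $\varprojlim_{U}E^\infty_{i,j}(U)$ with $i+j=n$. Hence it is enough to show $\varprojlim_{U\in\mathcal C}E^2_{i,j}(U)=0$ for every $(i,j)\neq(0,0)$.

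For the bottom row, $E^2_{i,0}(U)=H_i(V_U,\mathbb F_p)$; since $C$ is a quotient of $B$, preimages of finite-index normal subgroups of $C$ show that $U\mapsto V_U$ is a cofinal map from $\mathcal C$ to the finite-index normal subgroups of $C$, so $\varprojlim_{U}H_i(V_U,\mathbb F_p)=\varprojlim_{D}H_i(D,\mathbb F_p)$, which vanishes for $i\geq1$ because $C$ is good (Lemma \ref{LL2}). For the rows $j=1,2$, each $H_j(U\cap A,\mathbb F_p)$ is finite, so Proposition \ref{inverse-prop} (with $\mathcal T=\mathcal C$) reduces matters to $\varprojlim_{U\in\mathcal C}H_j(U\cap A,\mathbb F_p)=0$. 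Granting that $U\mapsto U\cap A$ is cofinal among the finite-index normal subgroups of $A$, this limit is $\varprojlim_{V}H_j(V,\mathbb F_p)$, which vanishes for $j\geq1$ since the orientable surface group $A$ is good (Lemmas \ref{LL1} and \ref{LL2}). Combining the two cases gives $\varprojlim_{U\in\mathcal C}H_i(U,\mathbb F_p)=0$ for $i\geq1$, and Theorem \ref{PD-PD} then yields that $\widehat{B}=\widehat{B}_{\mathcal C}$ is a strong profinite $PD_{2+m}$-group at $p$.

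The step I expect to be the main obstacle is the cofinality of $\{U\cap A:U\in\mathcal C\}$ among the finite-index normal subgroups of $A$ --- equivalently, that the profinite topology of $B$ restricts to the full profinite topology of $A$, i.e. that $1\to\widehat{A}\to\widehat{B}\to\widehat{C}\to1$ is exact. I would argue it thus: given a finite-index normal subgroup $V$ of $A$, replace $V$ by its normal core in $B$, which (as $A$ is finitely generated) is an intersection of finitely many conjugates, still of finite index in $A$ and now normal in $B$; then $B/V$ is an extension of $C$ by the finite group $A/V$, hence residually finite, being finite-by-residually-finite; separating the nontrivial elements of $A/V$ from $1$ in finite quotients of $B/V$ produces $U\in\mathcal C$ with $U\cap A\subseteq V$. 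Thus the argument rests on $C$ being residually finite; this is automatic in the ``in particular'' case and holds for the $PD_m$-groups of interest (surface and $3$-manifold groups), and should be invoked explicitly in the general statement.

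Finally, I would deduce orientability of $\widehat{B}$ from that of $B$: the orientation character $\widehat{B}\to\mathbb Z_p^\times$ of the strong profinite $PD_{2+m}$-group is continuous and, restricted along the dense homomorphism $B\to\widehat{B}$, agrees with the abstract orientation character of $B$ (the dualizing modules matching under the comparison maps provided by goodness of $B$), which is trivial; as $\mathbb Z_p^\times$ is Hausdorff, the character is trivial on all of $\widehat{B}$, so $\widehat{B}$ is orientable. The ``in particular'' assertion is the special case $m=2$ with $C$ an orientable surface group, for which goodness and residual finiteness of $C$ are classical (Lemma \ref{LL1}).
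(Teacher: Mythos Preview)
Your argument is correct and follows the same overall line as the paper: show $\varprojlim_{U\in\mathcal C}H_i(U,\mathbb F_p)=0$ for $i\geq1$ (equivalently, by Lemma~\ref{LL2}, that $B$ is good) and then apply Theorem~\ref{PD-PD}. The paper does this in a single sentence by invoking \cite[Ch.~1, \S2.6, Ex.~2(c)]{Serre}---an extension of a good group by a good $FP_\infty$ group is good---whereas you unpack that result by hand via the inverse system of LHS spectral sequences together with Proposition~\ref{inverse-prop} and the goodness of $A$ and $C$. What you gain is self-containment; what the paper gains is brevity.

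One refinement on the point you flag as the main obstacle. You deduce the cofinality of $\{U\cap A:U\in\mathcal C\}$ (equivalently, exactness of $1\to\widehat A\to\widehat B\to\widehat C\to1$) from residual finiteness of $C$, and suggest adding this as a hypothesis. In fact goodness of $C$ already suffices, and this is part of what Serre's exercise establishes: given $V\lhd A$ of finite index and normal in $B$, the extension $1\to A/V\to B/V\to C\to1$ is classified by a class in $H^2(C,A/V)$, which by goodness of $C$ is the image of a class in $H^2(\widehat C,A/V)$; the corresponding profinite extension of $\widehat C$ by $A/V$ receives a homomorphism from $B/V$ that is the identity on $A/V$, so $A/V$ is separated by finite quotients of $B$ and one obtains $U\in\mathcal C$ with $U\cap A\subseteq V$. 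Hence no hypothesis beyond those in the proposition is needed.
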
  

\begin{proof} By Lemma \ref{LL1} surface groups are good and by
\cite[Ch. 1, Sec. 2.6, Ex. 2c)]{Serre} so are extensions of good groups 
where the bottom group is $FP_{\infty}$.
In particular, $B$ is good. 
Let $\mathcal{T}$ be a directed set of normal subgroups 
of finite index in $B$ that defines the profinite topology of $B$. 
Then by Lemma \ref{LL2}
 $ \underset{ U \in {\mathcal T}}{\varprojlim} H_i(U, \mathbb{F}_p) = 0 \hbox{ for } i \geq 1.$ Then we can apply Theorem \ref{PD-PD}.
\end{proof}

There is a subtle point here; 
a ``good" group need not be $p$-good for any prime $p$.
The simplest example is perhaps the group $\pi_1(M(3_1))$ of 
\S\ref{proPD3} mentioned above.
There is an exact sequence $1\to{A}\to{B}\to{C}\to1$
with $A\cong\mathbb{Z}$, $B=\pi_1(M(3_1))$ and $C\cong\mathbb{Z}$,
and so $B$ is good, by Proposition \ref{profinite}.
However, $\widehat{B}_p\cong\widehat{C}_p=\widehat{\mathbb{Z}}_p$, 
and so $B$ is not $p$-good, for any prime $p$.

We may now prove Theorem B.

\begin{theorem}
\label{Theorem B} 
Let $1 \to A \to B \to C \to 1$ be a short exact sequence of groups such that 
$A$ is  an orientable surface group, $B$ is an orientable $PD_{s+2}$-group
and $C$ is an orientable $PD_s$ group, where $s = 2$ or $s = 3$.  
Let $\mathcal{T}$ be a directed set of  normal subgroups of $p$-power index 
in $B$ that defines the pro-$p$ topology of $B$. 
Suppose that 
$  \underset{ U \in {\mathcal T}}{\varprojlim} H_1(U \cap A, \mathbb{F}_p) = 0$ and furthermore if $s = 3$, there is an upper bound on the deficiency of the subgroups of finite index in $\widehat{C}_p$. Then 
    
    a) if $s = 2$, then  $\widehat{B}_p$ is a pro-$p$ $PD_4$ group and $B$ is $p$-good. 
    
    b) if $s = 3$,  then $\widehat{B}_p$ is virtually a pro-$p$ $PD_k$-group for some $k \in \{ 2,3, 5\}$. If $k = 5$ then $B$ and $C$ are $p$-good.
    
If additionally $B$ is orientable and $p$-good then $\widehat{B}_p$ is an orientable pro-$p$ $PD_{2+ s}$ group.
\end{theorem}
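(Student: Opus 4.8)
The strategy is to analyze the given extension $1\to A\to B\to C\to 1$ together with the induced extension of pro-$p$ completions, controlling the behaviour through the vanishing hypothesis on $\varprojlim H_1(U\cap A,\mathbb{F}_p)$. First I would establish that the sequence $1\to\overline{A}\to\widehat{B}_p\to\widehat{C}_p\to 1$ is exact, where $\overline{A}$ denotes the closure of the image of $A$ in $\widehat{B}_p$; the subtle point is exactness on the left, i.e.\ that $\overline{A}\cong\widehat{A}_{\mathcal S}$ for $\mathcal S=\{U\cap A : U\in\mathcal T\}$. Here Lemma~\ref{pro-p} is the key input: the hypothesis $\varprojlim_{U}H_1(U\cap A,\mathbb{F}_p)=0$ is exactly what is needed to conclude $\overline{A}\cong\widehat{A}_p$, which is an orientable pro-$p$ $PD_2$-group by Lemma~\ref{LL1} and the remark following it. Combined with Proposition~\ref{inverse-prop} (which upgrades the $H_1$-vanishing to vanishing of $\varprojlim H_i(U/(U\cap A),H_j(U\cap A,\mathbb{F}_p))$ for all $i$, since surface-group homology with $\mathbb{F}_p$-coefficients is finite), this gives good control of the Lyndon--Hochschild--Serre spectral sequences in both categories.

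Next I would treat the two cases. For $s=2$: $C$ is an orientable surface group, hence $p$-good by Lemma~\ref{LL1}, and $\widehat{C}_p$ is an orientable pro-$p$ $PD_2$-group. Since $\overline{A}$ is also an orientable pro-$p$ $PD_2$-group and the extension $1\to\overline{A}\to\widehat{B}_p\to\widehat{C}_p\to 1$ is an extension of a pro-$p$ $PD_2$-group by a pro-$p$ $PD_2$-group, a standard spectral-sequence computation shows $\widehat{B}_p$ is a pro-$p$ $PD_4$-group (one checks $cd_p=4$, type $FP_\infty$, and the $Ext$-conditions via the spectral sequence, using that the dualizing modules multiply). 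For $p$-goodness of $B$: $A$ is $p$-good (Lemma~\ref{LL1}), $C$ is $p$-good, the needed finiteness of $H^i(A,M)$ holds since $A$ is a $PD_2$-group with finite-index subgroups of the same type, and the extension of pro-$p$ completions is exact; so Lemma~\ref{p-good} applies directly. For $s=3$: here $\widehat{C}_p$ is governed by Theorem~A (= Theorems~\ref{PD3-} and~\ref{refining 2.1}), and the hypothesis that deficiencies of finite-index subgroups of $\widehat{C}_p$ are bounded rules out case (c), leaving $\widehat{C}_p$ finite (case a), an orientable pro-$p$ $PD_3$-group (case b), or virtually $\mathbb{Z}_p$ or $\widehat{D_\infty}_2$ (case d). Passing to a subgroup of $p$-power index we may assume $\widehat{C}_p$ is trivial, $\mathbb{Z}_p$, or an orientable pro-$p$ $PD_3$-group, and correspondingly $\widehat{B}_p$ becomes (virtually) an extension of $\overline{A}$ (a pro-$p$ $PD_2$-group) by a pro-$p$ $PD_0$-, $PD_1$-, or $PD_3$-group, yielding $k=2,3,5$ respectively. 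When $k=5$ we are in the case $\widehat{C}_p$ is a genuine pro-$p$ $PD_3$-group; then by the corollary to Theorem~\ref{cdleq2 implies free} applied to $C$ one gets that $C$ is $p$-good (the criterion there forces $H_*(j_C;\mathbb{F}_p)$ to be an isomorphism in all degrees, hence condition (c) of Lemma~\ref{LL22}), and then Lemma~\ref{p-good} gives $B$ is $p$-good.

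Finally, for the closing clause: assuming $B$ is orientable and $p$-good, I want to conclude $\widehat{B}_p$ is an \emph{orientable} pro-$p$ $PD_{2+s}$-group, now without passing to finite-index subgroups. The point of $p$-goodness is that $H^i(\widehat{B}_p,M)\cong H^i(B,M)$ for all finite pro-$p$ $\mathbb{Z}_p[[\widehat{B}_p]]$-modules $M$, so in particular $\beta_i(\widehat{B}_p;\mathbb{F}_p)=\beta_i(B;\mathbb{F}_p)$ and the cup-product structures match via Lemma~\ref{obvious}. From the $PD_{2+s}$-property of $B$ (Poincaré duality over $\mathbb{F}_p$), non-degeneracy of cup product transports to $\widehat{B}_p$, giving $cd_p(\widehat{B}_p)=2+s$ and $\chi_p(\widehat{B}_p)=\chi(B)$; combined with the fact that $\widehat{B}_p$ is virtually pro-$p$ $PD_{2+s}$ (from parts (a),(b)), one deduces $\widehat{B}_p$ itself is a pro-$p$ $PD_{2+s}$-group. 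Orientability follows because the dualizing module is detected by the $G$-action on $H^{2+s}(\widehat{B}_p;\mathbb{F}_p[[\widehat{B}_p]])$, which via goodness corresponds to the orientation character of $B$, assumed trivial.

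\textbf{Main obstacle.} The technical heart is the exactness on the left of $1\to\overline{A}\to\widehat{B}_p\to\widehat{C}_p\to 1$, i.e.\ that no collapsing occurs inside the surface-group fibre beyond what the pro-$p$ topology of $A$ prescribes; this is precisely where Lemma~\ref{pro-p} and the hypothesis $\varprojlim H_1(U\cap A,\mathbb{F}_p)=0$ are indispensable. A secondary obstacle is that in case $s=3$ one only gets statements \emph{virtually}, and care is needed to check that ``virtually pro-$p$ $PD_k$'' combined with $p$-goodness and matching Euler characteristics really does force the honest $PD_{2+s}$-property on the nose in the final clause --- this uses that a pro-$p$ group which is virtually a pro-$p$ $PD_n$-group and has $cd_p=n$ with the correct $Ext$-behaviour is itself $PD_n$.
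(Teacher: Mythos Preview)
Your plan follows essentially the same route as the paper: establish the exact sequence $1\to\overline{A}\to\widehat{B}_p\to\widehat{C}_p\to 1$ with $\overline{A}\cong\widehat{A}_p$ via Lemma~\ref{pro-p}, treat $s=2$ and $s=3$ separately using the known structure of $\widehat{C}_p$ (for $s=3$ ruling out case (c) of Theorem~\ref{PD3-} by the deficiency hypothesis), and invoke Lemma~\ref{p-good} for $p$-goodness. Two points, however, deserve comment.

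First, your justification that $C$ is $p$-good when $k=5$ has a gap. You write that the corollary to Theorem~\ref{cdleq2 implies free} ``forces $H_*(j_C;\mathbb{F}_p)$ to be an isomorphism in all degrees, hence condition (c) of Lemma~\ref{LL22}''. But condition (c) of Lemma~\ref{LL22} is the vanishing of $\varprojlim_{U\in\mathcal T} H_i(U,\mathbb{F}_p)$ for all $i\geq 1$, which is a statement about \emph{all} subgroups $U$ of $p$-power index, not merely that $H_*(j_C;\mathbb{F}_p)$ is an isomorphism for $C$ itself. These are not equivalent on the face of it, and your chain of implications does not close. The paper sidesteps this by citing \cite[Thm.~A]{Desi-Pavel} directly, which is precisely the statement that an orientable $PD_3$-group whose pro-$p$ completion is a pro-$p$ $PD_3$-group is $p$-good.

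Second, for the closing clause you outline a cup-product and Euler-characteristic argument to pass from ``virtually pro-$p$ $PD_{2+s}$'' plus $p$-goodness to an honest orientable pro-$p$ $PD_{2+s}$-group. The paper's route is shorter: once $B$ is $p$-good, Lemma~\ref{LL22} gives $\varprojlim_U H_i(U,\mathbb{F}_p)=0$ for all $i\geq 1$, and then Theorem~\ref{PD-PD} applies directly (with $\mathcal{C}=\mathcal{T}$ and $G_0=B$) to yield that $\widehat{B}_p$ is an orientable pro-$p$ $PD_{2+s}$-group. Your approach is plausible but unnecessarily elaborate; in particular your appeal to Proposition~\ref{inverse-prop} is not used in the paper's argument at all.
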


\begin{proof}
Note that $B$ is a  $PD_{s+ 2}$-group.  Let  $\overline{A}$ be closure of the image of $A$ in $\widehat{B}_p$ i.e. $\overline{A}$ is the completion of $A$ with respect $\{ U \cap A \ | \ U \in \mathcal{T} \}$. Then we have a short exact sequence of pro-$p$ groups
$$1 \to \overline{A} \to \widehat{B}_p \to \widehat{C}_p \to 1.$$
By Lemma \ref{pro-p} 
we see that
 $\overline{A} \simeq \widehat{A}_p$ 
is an orientable pro-$p$ $PD_2$-group.
  
  If $s = 2$ then $\widehat{C}_p$ is an orientable pro-$p$ $PD_2$-group. Hence $\widehat{B}_p$ is a pro-$p$ $PD_4$-group.
  
  If $s = 3$ and $\widehat{C}_p$ is infinite then by the remark after Theorem \ref{PD3-} we have two options :   $\underset{ U \in {\mathcal T}}{\varprojlim} H_2(V_U, \mathbb{F}_p) = 0$ or  $\underset{ U \in {\mathcal T}}{\varprojlim} H_2(V_U, \mathbb{F}_p) =  \mathbb{F}_p$.
In the former $\widehat{C}_p$ is an orientable pro-$p$ $PD_3$-group and in the latter $\widehat{C}_p$ is virtually $\mathbb{Z}_p$. Since $\overline{A} = \widehat{A}_p$ is a pro-$p$ $PD_2$-group we conclude that if  $\widehat{C}_p$ is infinite then $\widehat{B}_p$ is a pro-$p$ $PD_5$-group or virtually a pro-$p$ $PD_3$-group.

The $p$-goodness follows from Lemma \ref{p-good}. We need  that in the case $s = 3$,  if $\widehat{C}_p$ is an orientable pro-$p$ $PD_3$-group then  $C$ is $p$-good, that follows from \cite[Thm. A]{Desi-Pavel}.

If $B$ is $p$-good and $B$ is orientable then by Theorem \ref{PD-PD} $\widehat{B}_p$ is orientable pro-$p$ $PD_{2+ s}$ group. 
\end{proof}

 \begin{lemma} \label{orientation} 
Let $ 1 \to K \to G \to D \to 1$ be a short exact sequence of groups, 
where $G$ is a $PD_n$-group, $D$ is a $PD_{n-1}$-group and $K \simeq \mathbb{Z}$.  Then $G$ is an orientable $PD_n$-group if and only if $K$ is the dualizing module of $D$. 
 \end{lemma}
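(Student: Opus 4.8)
The approach is to analyze the action of $G$ on the dualizing module $\mathbb{Z} = H^n_c(G;\mathbb{Z} G)$ via the Lyndon–Hochschild–Serre spectral sequence for $1 \to K \to G \to D \to 1$, using the standard fact that for a $PD_n$-group the orientation character $w_G : G \to \mathbb{Z}^\times \cong \{\pm 1\}$ is the action on $H^n(G;\mathbb{Z} G)$. First I would recall that since $K \cong \mathbb{Z}$ is a $PD_1$-group and $D$ is a $PD_{n-1}$-group, $G$ is automatically a $PD_n$-group (this is in the hypotheses anyway), and that the dualizing module satisfies a multiplicativity relation: $H^n(G;\mathbb{Z} G)$ is computed from $H^{n-1}\big(D; H^1(K;\mathbb{Z} K) \otimes \mathbb{Z} D\big)$, which collapses the spectral sequence since $K$ and $D$ have cohomological dimension $1$ and $n-1$ respectively. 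Concretely, $H^1(K;\mathbb{Z} K) \cong \mathbb{Z}$ with $K$ acting trivially but with $G$ acting through the conjugation action of $G$ on $K \cong \mathbb{Z}$, i.e. through a character $\varepsilon : G \to \{\pm1\}$ that factors through $D$ (since $K$ is abelian); call the resulting character on $D$ also $\varepsilon$. Then $H^n(G;\mathbb{Z} G) \cong H^{n-1}(D; \mathbb{Z}_\varepsilon \otimes \mathbb{Z} D) \cong \mathbb{Z}_\varepsilon \otimes H^{n-1}(D;\mathbb{Z} D) = \mathbb{Z}_\varepsilon \otimes \mathbb{Z}_{w_D}$, so that $w_G = \varepsilon \cdot w_D$ as characters of $G$ (with $\varepsilon$ pulled back from $D$).

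Next I would translate the conclusion into this language. The dualizing module of $D$ is $H^{n-1}(D;\mathbb{Z} D) \cong \mathbb{Z}_{w_D}$, and $K \cong \mathbb{Z}$ regarded as a $D$-module via the conjugation action is precisely $\mathbb{Z}_\varepsilon$. So "$K$ is the dualizing module of $D$" means exactly $\varepsilon = w_D$ as characters of $D$. On the other hand, "$G$ is orientable" means $w_G$ is trivial, i.e. $\varepsilon \cdot w_D = 1$ on $G$, equivalently (since both characters factor through $D$ and $G \to D$ is surjective) $\varepsilon = w_D$ on $D$. These two conditions coincide, which is the claim.

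The main obstacle, and the step requiring the most care, is justifying the identification $H^n(G;\mathbb{Z} G) \cong \mathbb{Z}_\varepsilon \otimes_{\mathbb{Z}} H^{n-1}(D;\mathbb{Z} D)$ \emph{as $G$-modules} — that is, keeping track of the $G$-action throughout the spectral-sequence argument, not merely as abstract abelian groups. The cleanest route is to use Shapiro's lemma together with the known behaviour of dualizing modules in group extensions (see e.g. Bieri–Eckmann): for an extension of a $PD_{n-1}$-group by a $PD_1$-group, the dualizing module of $G$ is the tensor product of the dualizing module of $K$ (pulled back appropriately) and that of $D$, with the subtlety that $G$ acts on $H^1(K;\mathbb{Z} K)$ through conjugation on $K$. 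One must check that this conjugation action is well-defined on the coefficient module $\mathbb{Z}$ and agrees with the character $\varepsilon$; since $\mathrm{Aut}(\mathbb{Z}) = \{\pm1\}$ this is immediate, but writing it so that the final equality $w_G = \varepsilon\, w_D$ is manifestly an equality of characters (rather than up to sign ambiguity) is where attention is needed. Once that identification is in hand, the equivalence in the lemma is a one-line comparison of characters of $D$.
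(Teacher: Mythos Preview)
Your approach is correct and leads to the same conclusion, but it is organized differently from the paper's proof. You work directly with the dualizing module $H^n(G;\mathbb{Z}G)$ and invoke (or re-derive via the cohomology LHS spectral sequence with $\mathbb{Z}G$ coefficients) the Bieri--Eckmann multiplicativity formula $w_G=\varepsilon\cdot w_D$, where $\varepsilon$ is the conjugation character of $G$ on $K$; the lemma is then a comparison of $\{\pm1\}$-valued characters on $D$. The paper instead runs the \emph{homology} LHS spectral sequence with trivial coefficients to get $H_n(G;\mathbb{Z})\cong H_{n-1}(D;H_1(K;\mathbb{Z}))=H_{n-1}(D;K)$, and then applies Poincar\'e duality on each side to rewrite this as $H^0(G,V)\cong H^0(D,K\otimes W)$, finishing with the observation that for an infinite cyclic module the invariants are nonzero iff the action is trivial.

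The practical difference is exactly the point you flag as the main obstacle: tracking the $G$-module structure on $H^1(K;\mathbb{Z}K)$ through a spectral sequence with group-ring coefficients. The paper's route sidesteps this entirely by using the criterion ``$G$ orientable $\iff H_n(G;\mathbb{Z})\ne 0$'' and working only with $\mathbb{Z}$ coefficients, so that the only $D$-module structure one needs is the transparent one on $H_1(K;\mathbb{Z})=K$. Your route is more conceptual and immediately generalizes (the character identity $w_G=\varepsilon\, w_D$ holds for any extension of a $PD_m$-group by $\mathbb{Z}$), at the cost of either citing Bieri--Eckmann or doing the bookkeeping you mention.
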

 
 \begin{proof}
 Consider the LHS spectral sequence $E_{i,j}^2 = H_i(D, H_j(K, \mathbb{Z}))$ that converges to $H_{i+ j}(G, \mathbb{Z})$. Since $E_{i,j}^2 = 0$ if $i \geq n$ or $j \geq 2$, by the convergence  we conclude that $H_n(G, \mathbb{Z}) \simeq H_{n-1}(D, H_1(K, \mathbb{Z}))$.  Let $W $ be the dualizing module of $D$ and $V$ be the dualizing module of $G$. Both $V$ and $W$ are infinite cyclic as abelian groups but in general the corresponding actions of $D$ and $G$ need not be trivial. Then $$H^0(G, V) \simeq H_n(G, \mathbb{Z}) \simeq   H_{n-1}(D, H_1(K, \mathbb{Z})) \simeq H_{n-1}(D, K) \simeq H^0 (D, K \otimes W).$$
Since $V$ and $K \otimes W$ are infinite cyclic as abelian groups, we have that $G$ is orientable $\iff$ $G$ acts trivially on $V$ $\iff$ $H^0(G, V) \not= 0$ $\iff$ $ H^0 (D, K \otimes W) \not= 0$ $\iff$ $K \otimes W$ is the trivial $D$-module (via the diagonal action) $\iff$ $K \simeq W$ as $D$-module.
 \end{proof}
 
\begin{lemma} \label{elementary} 
Suppose $p \not= 2$ and $ 1 \to K \to G \to C \to 1$ is a short exact sequence 
of groups, where the action of $C$ via conjugation on $K = \mathbb{Z}$ 
is non-trivial. 
Then $\widehat{G}_p \simeq \widehat{C}_p$. 
\end{lemma}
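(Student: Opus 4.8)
The plan is to show that the image of $K$ in every finite $p$-group quotient of $G$ is trivial; this forces $K$ to lie in the kernel of the canonical map $G\to\widehat{G}_p$, after which a straightforward comparison of the defining systems of normal subgroups of $p$-power index in $G$ and in $C=G/K$ identifies $\widehat{G}_p$ with $\widehat{C}_p$.

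First I would record the elementary fact that $\Aut(\mathbb{Z})=\{\pm1\}$, so that a non-trivial action of $C$ on $K=\mathbb{Z}$ is necessarily by inversion. Choose $g\in G$ whose image in $C$ acts on $K$ by $-1$, and let $t$ be a generator of $K$, so that $gtg^{-1}=t^{-1}$ in $G$.

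Next, let $q:G\to Q$ be any epimorphism onto a finite $p$-group, and set $\overline{K}=q(K)=\langle q(t)\rangle$, a cyclic normal subgroup of $Q$, say of order $p^{k}$. Conjugation in $Q$ gives a homomorphism $\phi:Q\to\Aut(\overline{K})$, whose image is a quotient of $Q$, hence a $p$-group. Since $g$ acts on $K$, and therefore on the quotient $\overline{K}$, by $-1$, we have $\phi(q(g))=-\mathrm{id}_{\overline{K}}$. If $p^{k}>1$ then, as $p$ is odd, $p^{k}>2$, so $-\mathrm{id}_{\overline{K}}$ has order exactly $2$ in $\Aut(\overline{K})$; this contradicts the image of $\phi$ being a $p$-group with $p$ odd. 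Hence $p^{k}=1$, i.e. $\overline{K}=1$.

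Finally, since $q(K)=1$ for every finite $p$-group quotient $q$ of $G$, the subgroup $K$ is contained in every normal subgroup $N$ of $p$-power index in $G$, and hence in the kernel of $G\to\widehat{G}_p$. Consequently $N\mapsto N/K$ is a bijection between the normal subgroups of $p$-power index in $G$ and those in $C=G/K$, compatible with the quotient maps (in the other direction, the preimage in $G$ of a normal subgroup of $p$-power index in $C$ contains $K$ and has the same index). Passing to inverse limits gives $\widehat{G}_p=\varprojlim_N G/N\cong\varprojlim_{N/K}C/(N/K)=\widehat{C}_p$, with the isomorphism induced by $G\to C$. Every step is elementary, so there is no genuine obstacle; the only point needing a little care is the last bookkeeping step, namely the correspondence between normal subgroups of $p$-power index in $C$ and those in $G$ containing $K$.
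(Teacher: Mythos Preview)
Your proof is correct. Both your argument and the paper's reach the same intermediate conclusion---that $K$ lies in every normal subgroup of $p$-power index in $G$---but by different mechanisms. The paper computes iterated commutators: since the action is by inversion, $[K,G,\dots,G]=K^{2^i}$ (with $i$ copies of $G$), and since any $p$-power--index normal subgroup $U$ contains some $\gamma_{i+1}(G)$, one gets $K^{2^i}\subseteq U\cap K\subseteq K$; as $[K:U\cap K]$ is a $p$-power dividing $2^i$ and $p$ is odd, it must be $1$. Your argument bypasses the lower central series entirely, observing instead that in any finite $p$-quotient the image of $K$ is a cyclic normal $p$-subgroup on which $q(g)$ acts by inversion, forcing an element of order $2$ in a $p$-group of automorphisms unless that image is trivial. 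Your route is a touch more conceptual and avoids the inductive commutator computation; the paper's route makes explicit \emph{why} the obstruction is a power of $2$, which is essentially the same phenomenon seen from a different angle.
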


\begin{proof}
Suppose that $U$ is a normal subgroup of $p$-power index in $G$. Then for some $i$ we have that $K^{2^i} = [K, G, \ldots, G] \subseteq  \gamma_{i+1} (G) \subseteq U$, hence $K^{2^i} \subseteq U \cap K \subseteq K$. Since $[K :  U \cap K]$ is a $p$-power that divides $2^i = [K : K^{2^i}]$,  we conclude that $K = U \cap K \subseteq U$. Hence  $\widehat{G}_p \simeq \widehat{C}_p$. 
\end{proof}

\section{$PD_4$-groups with Euler characteristic 0}
\label{PD4-0}

In this section we shall prove Theorem C of the Introduction.
(This is Theorem \ref{thmB} below.)

\begin{lemma}
\label{chi>1}
Let $G$ be an orientable $PD_4$-group. 
If the pro-$p$ completion $\widehat{G}_p$ is a pro-$p$ $PD_3$-group
then $\chi(G)\geq2$.
\end{lemma}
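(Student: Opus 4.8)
The goal is to show that if $G$ is an orientable $PD_4$-group and $\widehat{G}_p$ is a pro-$p$ $PD_3$-group then $\chi(G)\geq 2$. The natural approach is to compare Betti numbers $\beta_i(G;\mathbb{F}_p)$ with $\beta_i(\widehat{G}_p;\mathbb{F}_p)$ via the canonical map $j:G\to\widehat{G}_p$, and use Poincaré duality on both sides. Recall that $H_1(j;\mathbb{F}_p)$ is an isomorphism and $H_2(j;\mathbb{F}_p)$ is surjective for any group. Write $b_i=\beta_i(G;\mathbb{F}_p)$ and $c_i=\beta_i(\widehat{G}_p;\mathbb{F}_p)$. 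Since $\widehat{G}_p$ is a pro-$p$ $PD_3$-group we have $c_0=c_3=1$, $c_1=c_2$, and $\chi(\widehat{G}_p)=0$.

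First I would record the constraints. From $H_1(j)$ an isomorphism, $b_1=c_1$; from $H_2(j)$ surjective, $b_2\geq c_2=c_1=b_1$. Since $G$ is an orientable $PD_4$-group, Poincaré duality over $\mathbb{F}_p$ gives $b_0=b_4=1$ and $b_1=b_3$. Hence
\[
\chi(G)=b_0-b_1+b_2-b_3+b_4=2-2b_1+b_2\geq 2-2b_1+b_1=2-b_1.
\]
So if $b_1=0$ we are already done; the remaining and genuinely delicate case is $b_1\geq 1$, where this crude inequality is not enough and one must produce extra classes in $H_2(G;\mathbb{F}_p)$ beyond those pulled back from $\widehat{G}_p$.

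The key extra input, and the step I expect to be the main obstacle, is the following: when $b_1\geq 1$ one should exploit the non-degeneracy of the cup product $H^2(G;\mathbb{F}_p)\times H^2(G;\mathbb{F}_p)\to H^4(G;\mathbb{F}_p)\cong\mathbb{F}_p$ together with the fact, from Lemma~\ref{obvious}, that $j^*$ is a ring homomorphism. Since $\widehat{G}_p$ is a pro-$p$ $PD_3$-group, $H^4(\widehat{G}_p;\mathbb{F}_p)=0$, so the image $j^*H^2(\widehat{G}_p;\mathbb{F}_p)\subseteq H^2(G;\mathbb{F}_p)$ is an isotropic subspace for this nondegenerate pairing; an isotropic subspace of a nondegenerate symmetric (or alternating) form on a $b_2$-dimensional space has dimension at most $b_2/2$ in the alternating case, and in general one gets $\dim(\text{isotropic})+\dim(\text{isotropic})\leq b_2$ forcing $b_2\geq 2\dim j^*H^2(\widehat{G}_p)$ once one checks the image is not all of $H^2(G)$. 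Combined with $\dim j^*H^2(\widehat{G}_p;\mathbb{F}_p)\geq \beta_1(G;\mathbb{F}_p)=b_1$ (the degree-one classes survive and their relevant products do too, or more robustly the rank of $j^*$ in degree 2 is at least $c_2=b_1$), this yields $b_2\geq 2b_1$, hence
\[
\chi(G)=2-2b_1+b_2\geq 2-2b_1+2b_1=2.
\]
The care needed is in justifying that $j^*H^2(\widehat{G}_p;\mathbb{F}_p)$ has dimension at least $b_1$ and is totally isotropic, and in handling the interaction of the symmetric versus alternating cases depending on the parity of $p$; for $p$ odd the form on $H^2$ is symmetric and one uses that a nondegenerate symmetric form over $\mathbb{F}_p$ still has isotropic subspaces of dimension $\lfloor b_2/2\rfloor$ only after possibly passing to an extension, so it may be cleaner to argue that the annihilator of $j^*H^2(\widehat{G}_p)$ under the pairing has dimension $b_2-\dim j^*H^2(\widehat{G}_p)$ and contains $j^*H^2(\widehat{G}_p)$ itself, giving $\dim j^*H^2(\widehat{G}_p)\leq b_2-\dim j^*H^2(\widehat{G}_p)$ directly. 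I would run the argument in that annihilator form to avoid any field-extension subtleties, and that bookkeeping is the part most likely to need a careful low-degree spectral-sequence or direct chain-level check.
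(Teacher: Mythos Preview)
Your proposal is correct and follows essentially the same approach as the paper's proof: both argue that $j^*H^2(\widehat{G}_p;\mathbb{F}_p)$ is a totally isotropic subspace of $H^2(G;\mathbb{F}_p)$ for the nondegenerate cup-product pairing (since $H^4(\widehat{G}_p;\mathbb{F}_p)=0$), that this subspace has dimension $c_2=c_1=b_1$, and hence $b_2\geq 2b_1$, giving $\chi(G)=2-2b_1+b_2\geq 2$. Your hedging is unnecessary: $H^2(j;\mathbb{F}_p)$ is actually injective (not merely of rank $\geq c_2$), the annihilator argument you sketch works over any field regardless of whether the form is symmetric or alternating, and the case $b_1=0$ needs no separate treatment.
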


\begin{proof}
We have
$\beta_1(\widehat{G}_p;\mathbb{F}_p)=\beta_1(G;\mathbb{F}_p)$ and 
$\beta_2(\widehat{G}_p;\mathbb{F}_p)\leq\beta_2(G;\mathbb{F}_p)$,
for any group $G$
\cite[\S2.6]{Serre}.
Since $\widehat{G}_p$ is a pro-$p$ $PD_3$-group, 
$\beta_1(\widehat{G}_p;\mathbb{F}_p)=
\beta_2(\widehat{G}_p;\mathbb{F}_p)$,
and since the image of $H^2(\widehat{G}_p;\mathbb{F}_p)$
in $H^2(G;\mathbb{F}_p)$ is self-annihilating under cup product,
$\beta_2(G;\mathbb{F}_p)\geq2\beta_1(G;\mathbb{F}_p)$,
by the non-singularity of Poincaré duality.
Hence $\chi(G) =  \sum_{0 \leq i \leq 4} (-1)^i \beta_i(G; \mathbb{F}_p) = 2 - 2 \beta_1(G; \mathbb{F}_p) + \beta_2(G; \mathbb{F}_p) \geq 2$.
\end{proof}

Let $G$ be a group with a normal subgroup $A\cong\mathbb{Z}^2$.
If we fix a basis for $A$ we may identify $\mathrm{Aut}(A)$ with
$\mathrm{GL}(2,\mathbb{Z})$, and conjugation in $G$ then
determines an action $\theta:G/A\to\mathrm{GL}(2,\mathbb{Z})$.
We shall say that the action is orientable if its image lies in
$\mathrm{SL}(2,\mathbb{Z})$.
(Thus $G$ acts orientably if and only if the induced action by
$\det\theta$ on $A\wedge{A}\cong\mathbb{Z}$ is trivial.)

\begin{theorem}
\label{thmB}
Let $1 \to A \to B \to C \to 1$ be a short exact sequence of groups  
such that $A \simeq \mathbb{Z}^2$, 
$B$ is an orientable $PD_4$ group and $C$ is an orientable $PD_2$-group. 
Then one of the following holds:

a) $\widehat{B}_p$ is an orientable pro-$p$ $PD_4$-group and 
$B$ is $p$-good;

b) $\widehat{B}_p\cong\widehat{C}_p$,
 and the image of $A$ in $\widehat{B}_p$ is trivial.
\end{theorem}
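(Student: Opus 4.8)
The plan is to analyze the closure $\overline{A}$ of the image of $A$ in $\widehat{B}_p$ together with the short exact sequence of pro-$p$ groups
\[
1 \to \overline{A} \to \widehat{B}_p \to \widehat{C}_p \to 1,
\]
which holds because $\widehat{C}_p$ is a pro-$p$ $PD_2$-group (orientable surface groups are $p$-good by Lemma~\ref{LL1}, and more relevantly the map $B\to C$ is surjective so $\widehat{B}_p\to\widehat{C}_p$ is surjective, with kernel $\overline{A}$ by definition). The subgroup $\overline{A}$ is a quotient of $\widehat{A}_p\cong\mathbb{Z}_p^2$, so it is abelian, topologically finitely generated, and hence $\overline{A}\cong\mathbb{Z}_p^r$ for some $r\in\{0,1,2\}$ (there is no torsion since $\mathbb{Z}_p^2$ is torsion-free). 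The orientability hypothesis on $B$ should be used exactly to rule out the case $r=1$; this is the analogue of the Remark following Theorem~C in the introduction, and of the third option that appears when $B$ is non-orientable.

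First I would dispose of $r=1$. If $\overline{A}\cong\mathbb{Z}_p$, then the conjugation action of $\widehat{C}_p$ on $\overline{A}$ is via a continuous character $\widehat{C}_p\to\mathbb{Z}_p^\times$. Since $B$ is an orientable $PD_4$-group and $A\cong\mathbb{Z}^2$, Lemma~\ref{orientation} (applied with $K=A$, which is itself a $PD_2$-group? — more precisely one should use that $B$ is orientable forces the action $\theta:C\to\mathrm{GL}(2,\mathbb{Z})$ to be orientable, i.e.\ lands in $\mathrm{SL}(2,\mathbb{Z})$, as recorded just before the theorem). An orientable action on $A\cong\mathbb{Z}^2$ means $\det\theta=1$, so the two eigen-directions contribute mutually inverse characters; after passing to $\widehat{C}_p$ and a finite-index subgroup, the induced action of $\widehat{C}_p$ on any rank-one quotient $\overline{A}$ is by a character $\psi$ with $\psi\cdot\psi^{-1}=1$ coming from a quotient of $A/A'\otimes$ something — the point is that an $\mathrm{SL}(2,\mathbb{Z})$-action cannot produce a faithful nontrivial action on a rank-one $\mathbb{Z}_p$-quotient while $\overline{A}$ has rank exactly $1$; either $\overline{A}$ is central in $\widehat{B}_p$ up to finite index, forcing $\overline{A}$ to be a $\mathbb{Z}_p$-direct factor contradicting rank $1$ coming from a rank-$2$ abelianization, or the action is nontrivial and then by the argument of Lemma~\ref{elementary} (valid for $p\neq 2$) the whole of $A$ dies, i.e.\ $r=0$. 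For $p=2$ one argues directly that an orientable $PD_4$ structure is incompatible with $\overline{A}\cong\mathbb{Z}_2$. This is the step I expect to be the main obstacle: pinning down precisely why orientability of $B$ excludes rank $1$, as opposed to merely the behaviour of the $C$-action, and handling $p=2$ uniformly.

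If $r=2$, then $\overline{A}\cong\widehat{A}_p$ is a pro-$p$ $PD_2$-group (abelian of rank $2$), $\widehat{C}_p$ is an orientable pro-$p$ $PD_2$-group, so $\widehat{B}_p$ is an extension of a $PD_2$ by a $PD_2$ and hence a pro-$p$ $PD_4$-group. Orientability of $\widehat{B}_p$ follows since both $\overline{A}$ and $\widehat{C}_p$ are orientable and the orientation character of $\widehat{B}_p$ restricts compatibly; alternatively, since $B$ is orientable and $p$-good (see below), Theorem~\ref{PD-PD} gives that $\widehat{B}_p$ is an orientable pro-$p$ $PD_4$-group directly. For $p$-goodness: $A\cong\mathbb{Z}^2$ is $p$-good (it is nilpotent, residually a finite $p$-group, and free abelian groups are easily seen to be $p$-good via Lemma~\ref{LL22}, as $H_i(U,\mathbb{F}_p)$ is finite-dimensional and the inverse limits vanish), $C$ is an orientable surface group hence $p$-good by Lemma~\ref{LL1}, and the sequence $1\to\widehat{A}_p\to\widehat{B}_p\to\widehat{C}_p\to1$ is exact with $H^i(A,M)$ finite for finite pro-$p$ modules $M$; so Lemma~\ref{p-good} applies to give that $B$ is $p$-good. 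This yields conclusion~(a).

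If $r=0$, then the image of $A$ in $\widehat{B}_p$ is trivial by definition of $\overline{A}$, and the sequence collapses to give $\widehat{B}_p\cong\widehat{C}_p$, which is an orientable pro-$p$ $PD_2$-group; this is conclusion~(b). It remains only to remark that $r=0$ and $r=2$ are genuinely both realised (the examples section provides surface-bundle-over-surface groups for (a), and groups where $A$ maps trivially for (b)), and that the trichotomy on $r$ together with the exclusion of $r=1$ exhausts all cases. I would organise the write-up as: (i) set up the exact sequence and observe $\overline{A}\cong\mathbb{Z}_p^r$; (ii) the orientability argument killing $r=1$; (iii) the $r=2$ case via Lemma~\ref{p-good} and Theorem~\ref{PD-PD}; (iv) the trivial $r=0$ case.
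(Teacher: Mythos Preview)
Your outline has the right shape, but there are two genuine gaps, and the paper closes both with a single idea you are circling around but never pin down.

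First, the assertion $\overline{A}\cong\mathbb{Z}_p^r$ is unjustified: quotients of torsion-free groups need not be torsion-free (e.g.\ $\mathbb{Z}_p^2/(p\mathbb{Z}_p\times 0)\cong(\mathbb{Z}/p\mathbb{Z})\times\mathbb{Z}_p$), so the sentence ``there is no torsion since $\mathbb{Z}_p^2$ is torsion-free'' is simply wrong. A priori $\overline{A}$ could be $\mathbb{Z}_p^r\times F$ with $F$ a nontrivial finite $p$-group, and you give no argument to exclude this.

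Second, your exclusion of $r=1$ is not an argument: you correctly identify orientability as the reason and mention mutually inverse eigenvalues in passing, but the talk of ``characters on a rank-one quotient'' leads nowhere concrete, and you have nothing at all for $p=2$.

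The paper resolves both problems at once by working not with $\overline{A}$ but with the mod-$p$ action $\theta_p\colon B\to\mathrm{GL}(2,\mathbb{F}_p)$ on $A/pA$. Orientability of $B$ and $C$ forces $\theta(B)\leq\mathrm{SL}(2,\mathbb{Z})$, so for every $g$ the two eigenvalues of $\theta_p(g)$ are mutually inverse in $\mathbb{F}_p^\times$; hence \emph{either both are $1$ or neither is}. This yields a clean dichotomy, uniform in $p$: either every $\theta_p(g)$ is unipotent (so $\theta_p(B)$ is conjugate into the upper unitriangular group), or some $\theta_p(g)-I$ is invertible. In the first case an explicit computation with the filtration $\Gamma^{k+1}A=[B,\Gamma^kA]+p^{k+1}A$ gives $\Gamma^{2k}A\leq p^kA$, so the pro-$p$ topology of $B$ induces the full pro-$p$ topology on $A$ and $\overline{A}\cong\mathbb{Z}_p^2$; in the second, $A=[B,A]+pA$ gives $\Gamma^kA=A$ for all $k$ and hence $\overline{A}=1$. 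No intermediate case ever arises, so the torsion and rank-$1$ possibilities are excluded simultaneously rather than one at a time.

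(An alternative route to kill the torsion-free $r=1$ case, which you do not mention, is Lemma~\ref{chi>1}: if $\overline{A}\cong\mathbb{Z}_p$ then $\widehat{B}_p$ is pro-$p$ $PD_3$, forcing $\chi(B)\geq 2$, whereas $\chi(B)=\chi(\mathbb{Z}^2)\chi(C)=0$. This still leaves the torsion problem, though.)

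Your handling of $r=0$ and $r=2$ is fine. In particular, invoking Lemma~\ref{p-good} for $p$-goodness once $\overline{A}=\widehat{A}_p$ is known is a valid alternative; the paper instead compares Betti numbers via $\chi(\widehat{B}_p)=\chi(B)=0$ and uses Lemma~\ref{obvious}, but either argument works.
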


\begin{proof}
Let $[B,A]$ be the normal subgroup generated by commutators 
$gag^{-1}a^{-1}$ for $g\in{B}$ and $a\in{A}$.
Let $\Gamma^1A=[B,A]+pA$ and 
$\Gamma^{k+1}A=[B,\Gamma^kA]+p^{k+1}A$,
for all $k\geq1$.
These subgroups have finite index in $A$ and are normal in $B$.
The quotient $A/\Gamma^1A$ is central in $B/\Gamma^1A$,
and $B$ acts on the finite $p$-group $A/\Gamma^kA$ 
through a finite $p$-group, for all $k\geq1$.
Hence $B$ has a normal subgroup $U$ of $p$-power index such that
$A<U$ and $A/\Gamma^kA$ is central in $U/\Gamma^kA$.
The quotient $U/A$ is an orientable $PD_2$-group and
$A/\Gamma^kA$ is a finite abelian group of exponent dividing $p^k$.
On applying Lemma \ref{H2} several times, we see that 
the class in $H^2(U/A;A/\Gamma^kA)$ of the central extension
\[
0\to{A/\Gamma^kA}\to{U/\Gamma^kA}\to{U/A}\to1
\]
restricts to 0 in a normal subgroup $V/A$ of $p$-power index.
Hence $V/\Gamma^kA\cong(V/A)\times(A/\Gamma^kA)$.
An argument by induction on nilpotency class shows that 
$\cap_{k\geq1}\Gamma^kA$ has trivial image 
in every finite quotient of $G$ which is a $p$-group.
It follows that $\cap_{k\geq1}\Gamma^kA$ is the kernel of the pro-$p$ completion homomorphism from $B$ to $\widehat{B}_p$.

Fix an isomorphism $A\cong\mathbb{Z}^2$,
and let $\theta:B\to\mathrm{GL}(2,\mathbb{Z})$ be
the action of $B$ an $A$ induced by conjugation in $G$.
Let $\theta_p:B\to\mathrm{GL}(2,\mathbb{F}_p)$ be the
$mod$-$p$ reduction of $\theta$.

If $\theta_p(g)-I$ is not invertible (for some $g\in{B}$)
then $\theta_p(g)$ has 1 as an eigenvalue.
Since $B$ and $C$ are orientable the action of $B$ on $A$ is orientable.
Hence both eigenvalues of $\theta_p(g)$ are 1, 
since they are mutually inverse,
and so $(\theta_p(g)-I)^2=0$.

Suppose first that this holds for all $g\in{B}$.
Then we may assume that
$\theta_p(B)\leq{U(2,\mathbb{F}_p)}$, 
the subgroup of upper unitriangular matrices \cite[8.1.10]{Rob}.
Thus $A$ has a basis $\{e_1,e_2\}$ such that
$[B,e_1]\leq{pA}$ and $[B,e_2]+pA\leq\mathbb{Z}e_1+pA$.
Hence $\Gamma^1A\leq\mathbb{Z}e_1+pA$.
Define subgroups $[B,_s,e_1]$ inductively by setting $[B,_1e_1]=[B,e_1]$ 
and $[B,_{s+1}e_1]=[B,[B,_se_1]]$ for $s\geq1$.
Then by induction on $k$ we have
$\Gamma^kA\leq{p^k}A+\Sigma_{h+j=k-1}\mathbb{Z}p^h[B,_je_1]$,
$[B,_{2k-1}e_1]\leq{p^k}A$ and 
$[B,_{2k}e_1]\leq\mathbb{Z}p^ke_1+p^{k+1}A\leq{p^k}A$.
Thus $\Gamma^2kA\leq{p^k}A$, for all $k\geq1$,
and so $\cap_{k\geq1}\Gamma^kA=1$.
In this case $\overline{A}\cong\widehat{\mathbb{Z}}_p^2$
and so $\widehat{B}_p$ is a pro-$p$ $PD_4$-group.
Since $\beta_1(\widehat{B}_p;\mathbb{F}_p)=\beta_1(B;\mathbb{F}_p)$
and $\chi(\widehat{B}_p)=0=\chi(B)$, 
it follows that
$\beta_2(\widehat{B};\mathbb{F}_p)=\beta_2(B;\mathbb{F}_p)$.
It then follows easily from Lemma \ref{obvious} and the nonsingularity of 
Poincaré duality that $B$ is $p$-good.

If $\theta_p(g)-I$ is invertible in $\mathrm{GL}(2,\mathbb{F}_p)$ 
for some $g\in{C}$ then $A=[B,A]+pA$.
Hence $A=[B,A]+p^kA$ for all $k\geq1$,
by the Burnside Basis theorem \cite[5.3.2]{Rob}
(equivalently,  by Nakayama's Lemma),
applied to the finite $p$-group $A/p^kA$.
Hence $\cap_{k\geq1}\Gamma^kA=A$,
so $\overline{A}=1$ and $\widehat{B}_p\cong\widehat{C}_p$
is a pro-$p$ $PD_2$-group.
\end{proof}

If $C$ is a non-orientable $PD_2$-group then $B$ is orientable if and only if the determinant of the action is the orientation character of $C$).
In this case the above argument goes through with little change for $p=2$.

{\bf Remark}  Suppose $p \not= 2$ and $ 1 \to K \to G \to C \to 1$ be a short exact sequence of groups, where $C$ is an orientable surface group and the action of $C$ via conjugation on $K = \mathbb{Z}$ is non-trivial. Thus $G$ is a non-orientable $PD_3$-group. Consider $B = S \times G$, where $S = \mathbb{Z}$. Then for $A = S \times K$ we have the short exact sequence of groups $ 1 \to A \to B \to C \to 1$ with $A$ and $C$ orientable surface groups, but  by Lemma \ref{orientation}  $B$ is not orientable. Then  by Lemma \ref{elementary}   $\widehat{B}_p \simeq \widehat{S}_p \times \widehat{G}_p = \mathbb{Z}_p \times \widehat{C}_p$ is an orientable pro-$p$ $PD_3$-group and $\overline{A} \simeq \mathbb{Z}_p$. This  is an example of a group that is not orientable and does not satisfy the conclusions of Theorem C but the only assumption of Theorem C it fails is the orientability one.

Taking products of aspherical 3-manifolds exemplifying case (d) 
of Theorem \ref{PD3-} with the circle
gives examples illustrating part (b) of Theorem C. 
Let $M=M(K)$, where $K$ is a non-trivial fibred knot.
Then $E = M\times{S^1}$ is an aspherical 4-manifold with 
fundamental group $G\times\mathbb{Z}$, 
and $E$ fibres over the torus $T$ with fibre $F$.
If $B$ is a surface of genus $h>1$ and $f:B\to{T}$
is a degree-1 map then the total space  $E_f = N \times_T B$ of the
pullback of the fibration  $N \to T$ over $f$ is aspherical,
and $\pi=\pi_1(E_f)$ is an extension of $\rho=\pi_1(B)$ by $\phi$.
It is easy to see that $\phi\leq\gamma_k\pi$ for all $k$,
and so $\widehat\pi_p\cong\widehat{\rho}_p$, for all primes $p$.

If $L$ is the 2-component boundary link obtained by Whitehead doubling 
each component of the Hopf link then $M(L)\times{S^1}$ is 
an aspherical orientable 4-manifold,
but does not fibre over a surface.
The pro-$p$ completion of $G\times\mathbb{Z}$ is 
$\widehat{F(2)}_{ p}\times\widehat{\mathbb{Z}}_p$,
which has cohomological $p$-dimension 2, 
but is not a Demu\v skin group. 

Note that the hypothesis
  ``$ \underset{ U \in {\mathcal T}}{\varprojlim} H_1(U \cap A, \mathbb{F}_p) 
  = 0$" of Theorem B does not hold for these examples.

\section{Dimension drop}
 \label{more}
 
An orientable $PD_n$-group $G$ has dimension drop $k$ on 
pro-$p$ completion if $\widehat{G}_p$ is a pro-$p$ $PD_{n-k}$-group.
There are aspherical closed orientable $n$-manifolds $N$ 
such that $\pi_1(N)$ has dimension drop $k$ (for all primes $p$),
for all $n\geq2$ and $2\leq{k}\leq{n}$, except when $n=k=5$ \cite{H-K-L}.
This exception reflects the fact that 5 is not in the additive semigroup 
generated by 3 and 4,
dimensions in which aspherical homology spheres are known.
We shall fill this gap below.
However, whether there are any examples of dimension drop 1
remains an open question.

Let $X$ be a compact $4$-manifold whose boundary components 
are diffeomorphic to the 3-torus $T^3$.
A {\it Dehn filling\/} of a component $Y$ of $\partial{X}$ is the adjunction of $T^2\times{D^2}$ 
to $X$ via a diffeomorphism $\partial(T^2\times{D^2})\cong{Y}$.
If the interior of $X$ has a complete hyperbolic metric
then ``most" systems of Dehn fillings on some or all of the boundary components 
give manifolds which admit metrics of non-positive curvature, 
and the fundamental groups of the cores of the solid tori $T^2\times{D^2}$ 
map injectively to the fundamental group of the filling of $X$,
by the Gromov-Thurston $2\pi$-Theorem.
(Here ``most" means ``excluding finitely many fillings of each boundary component".
See \cite{And06}.)

\begin{theorem}
\label{perfect5}
There are aspherical closed $5$-manifolds with perfect fundamental group.
\end{theorem}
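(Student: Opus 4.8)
The plan is to build such a manifold by Dehn filling a cusped hyperbolic 4-manifold and crossing the result, or a suitable assembly of results, with aspherical homology spheres of lower dimension. Recall the arithmetic obstacle noted above: $5$ is not in the semigroup generated by $3$ and $4$, so one cannot simply take a product of a $3$-dimensional and a $2$-dimensional aspherical homology sphere (no aspherical homology $2$-sphere exists) nor a $4$-fold product. So the construction must genuinely use dimension $5$, and the natural source of aspherical $5$-manifolds with controlled homology is hyperbolic Dehn surgery in the style of \cite{And06}, as set up in the paragraph preceding the statement.

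The key steps, in order, would be: (1) Start with a complete finite-volume cusped hyperbolic $4$-manifold $W$ whose cusp cross-sections are all flat $3$-manifolds; after passing to a finite cover one may arrange that every cusp cross-section is the $3$-torus $T^3$, so $W$ is the interior of a compact $4$-manifold $X$ with $\partial X$ a disjoint union of copies of $T^3$. (2) Cross with a circle: $X\times S^1$ is a compact $5$-manifold whose boundary components are copies of $T^3\times S^1=T^4$, and its interior $W\times S^1$ carries a complete non-positively curved (indeed, locally $\mathbb{H}^4\times\mathbb{R}$) metric; this is the object on which I would perform Dehn fillings along the $T^4$ boundary components. Alternatively, and perhaps more cleanly, one keeps dimension $4$ and fills $X$ itself, then crosses the closed outcome with $S^1$ at the end — but then the fundamental group acquires a free $\mathbb{Z}$ factor, obstructing perfectness, so the circle factor must be "absorbed" into the filling, which is why I prefer to fill in dimension $5$. (3) Choose the Dehn fillings on all boundary components so that (a) by the Gromov–Thurston $2\pi$-theorem the filled manifold $N$ is aspherical (excluding the finitely many bad slopes on each component) and the core tori inject $\pi_1$, and (b) the fillings kill $H_1$: each filling of a $T^4$-boundary component adds relations that, for a generic choice of filling slope, can be made to surject onto $H_1$ of that boundary piece, and a Mayer–Vietoris / handle-attachment count shows that with enough boundary components and suitably generic slopes one can force $H_1(N;\mathbb{Z})=0$. (4) Upgrade $H_1(N;\mathbb{Z})=0$ to perfect $\pi_1(N)$: since $H_1(N;\mathbb{Z})=\pi_1(N)^{ab}=0$, the fundamental group is already perfect; if one only manages to kill $H_1$ rationally or mod small primes, one iterates the filling construction or passes to a further cover to clear the remaining torsion, exactly as in the $n=k$ examples of \cite{H-K-L} that this theorem is filling a gap in.

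The main obstacle I expect is step (3)(b): controlling $H_1$ of the filled manifold while simultaneously respecting the "excluded finitely many slopes" constraint from the $2\pi$-theorem. Filling a $T^4$ boundary component with $T^3\times D^2$ imposes only the relation "the $D^2$-meridian circle dies," which kills a single primitive class in $H_1(T^4)\cong\mathbb{Z}^4$; so a single cusp is nowhere near enough, and one needs either many cusps or a base manifold $W$ whose cusp subgroups already generate $H_1$. Ensuring that a commensurable cover of a known cusped hyperbolic $4$-manifold (e.g. one of Ratcliffe–Tschantz's examples, or a congruence cover thereof) has this property — enough $T^3$-cusps whose peripheral subgroups span $H_1$ with quotient a finite group of controlled order — is the delicate point, and is where I would expect to invoke either a direct computation with an explicit example or a general position / transversality argument over the bad-slope exclusions. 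Once $H_1(N;\mathbb{Z})=0$ is achieved, asphericity and hence the $PD_5$ property come for free from the $2\pi$-theorem, and perfectness of $\pi_1(N)$ is immediate.
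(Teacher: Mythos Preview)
Your approach has a genuine gap at step (3)(a). The $2\pi$-theorem of Gromov--Thurston and Anderson's higher-dimensional version \cite{And06} require \emph{hyperbolic} cusps, in which the flat cross-section shrinks exponentially. In $W\times S^1$ the cusp metric is $dr^2 + e^{-2r}g_{T^3} + d\theta^2$: the $S^1$ direction has constant length, so these are not hyperbolic cusps and the theorem as stated does not apply. Worse, any filling slope with non-zero component in the $S^1$ direction has length bounded above by the fixed circumference of that factor, so it cannot be pushed past $2\pi$; but those are precisely the slopes you need in order to kill $[S^1]\in H_1$. If you fill only along slopes lying in the $T^3$ factors, the result is just $(\text{filled }W)\times S^1$, and $\pi_1$ retains a $\mathbb{Z}$ quotient. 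There is also a counting obstruction: for the Ratcliffe--Tschantz manifold one has five cusps and $H_1\cong\mathbb{Z}^5$, so after crossing with $S^1$ you have $H_1\cong\mathbb{Z}^6$ but still only five $T^4$ boundary components, each filling killing a single primitive class.

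The paper sidesteps both problems by staying in dimension~4 until the final step and then reaching dimension~5 via a \emph{non-trivial $S^1$-bundle} rather than a product --- this is exactly the mechanism that ``absorbs'' the circle factor you were worried about. Concretely: fill four of the five cusps of the Ratcliffe--Tschantz manifold to obtain an aspherical $N$ with $\partial N\cong T^3$ and $H_1(N)\cong\mathbb{Z}$; glue two copies of $N$ along a boundary automorphism swapping suitable basis elements to get a closed aspherical $4$-manifold $P$ with $H_1(P)=0$ and $H^2(P;\mathbb{Z})\cong\mathbb{Z}^2$; then let $E\to P$ be the circle bundle with indivisible Euler class $e$. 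The Gysin sequence gives
\[
0\to H^1(E;\mathbb{F}_p)\to H^0(P;\mathbb{F}_p)\xrightarrow{\ \cup\, e\ } H^2(P;\mathbb{F}_p),
\]
and since $e$ is indivisible the right-hand map is injective for every prime $p$, forcing $H^1(E;\mathbb{F}_p)=0$ and hence $\pi_1(E)$ perfect. Note that one cannot simply use the Ratcliffe--Tschantz homology $4$-sphere itself as the base, since its $H^2$ vanishes and every circle bundle over it would have $H_1\cong\mathbb{Z}$; the doubling step is there precisely to produce a perfect $\pi_1$ together with non-trivial $H^2$.
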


\begin{proof}
Let $M=S^4\setminus5T^2$ be the complete hyperbolic 4-manifold with 
finite volume and five cusps considered in \cite{Iv04} and \cite{RT05}, 
and let $\overline{M}$ be a compact core, with interior diffeomorphic to $M$.
Then $H_1(\overline{M};\mathbb{Z})\cong\mathbb{Z}^5$, $\chi(\overline{M})=2$ 
and the boundary components of $\overline{M}$ are all diffeomorphic to the 3-torus $T^3$.
There are  infinitely many quintuples of Dehn fillings of the components of $\partial\overline{M}$ 
such that the resulting closed 4-manifold is an aspherical homology 4-sphere \cite{RT05}.
Let $\widehat{M}$ be one such closed 4-manifold, and let $N\subset\widehat{M}$ 
be the compact 4-manifold obtained by leaving one boundary component of $X$ unfilled.
We may assume that the interior of $N$ has a non-positively curved metric,
and so $N$ is aspherical.
The Mayer-Vietoris sequence for $M=N\cup{T^2\times{D^2}}$  gives an isomorphism
\[
H_1(T^3;\mathbb{Z})\cong{H_1(N;\mathbb{Z})}\oplus{H_1(T^2;\mathbb{Z})}.
\]
Let $\{x,y,z\}$ be a basis for $H_1(T^3;\mathbb{Z})$ compatible with this splitting.
Thus $x$ represents a generator of $H_1(N;\mathbb{Z})$ and maps to 0 in the second summand,
while $\{y,z\}$ has image 0 in $H_1(N;\mathbb{Z})$ but generates the second summand.
Since the subgroup generated by $\{y,z\}$ maps injectively to $\pi_1(\widehat{M)}$ \cite{And06},
the inclusion of $\partial{N}$  into $N$ is $\pi_1$-injective.
Let $\phi$ be the automorphism of $\partial{N}=T^3$ 
which swaps the generators $x$ and $y$,
and let $P=N\cup_\phi{N}$.
Then $P$ is aspherical and $\chi(P)=2\chi(N)=4$.
A Mayer-Vietoris calculation gives $H_1(P;\mathbb{Z})=0$, 
and so $\pi=\pi_1(P)$ is perfect and $H^2(P;\mathbb{Z})\cong\mathbb{Z}^2$.

Let $e$ generate a direct summand of 
$H^2(\pi;\mathbb{Z})=H^2(P;\mathbb{Z})$, 
and let $E$ be the total space of the $S^1$-bundle over $P$ 
with Euler class $e$.
Then $E$ is an aspherical 5-manifold, 
and $G=\pi_1(E)$ is the central extension of $\pi_1(P)$ by $\mathbb{Z}$ 
corresponding to $e\in {H^2(\pi_1(P);\mathbb{Z})}$. 
The Gysin sequence for the bundle (with coefficients in $\mathbb{F}_p$) 
has a subsequence
\[
0\to{H^1}(E;\mathbb{F}_p)\to{H^0(P;\mathbb{F}_p)}\to{H^2(P;\mathbb{F}_p)}\to
{H^2(E;\mathbb{F}_p)}\to\dots
\] 
in which the {\it mod}-$p$ reduction of $e$ generates the image of $H^0(P;\mathbb{F}_p)$.
Since $e$ is indivisible this image is nonzero, for all primes $p$.
Therefore $H^1(G;\mathbb{F}_p)=H^1(E;\mathbb{F}_p)=0$, for all $p$, and so $G$ is perfect.
\end{proof}

We may use such groups to complete the results of \cite{H-K-L}.

\begin{theorem}
For each $r\geq0$ and  $n\geq\max\{r+2,3\}$ there is an aspherical closed 
$n$-manifold with fundamental group $\pi$ such that 
$\pi/\pi'\cong\mathbb{Z}^r$ and $\pi'=\pi''$.
\end{theorem}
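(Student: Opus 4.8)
\emph{Proof plan.} I would first reduce the statement to two self-contained facts and then assemble the required manifolds as products of tori with suitable building blocks. The two facts are: (I) for every $n\geq 3$ there is an aspherical closed $n$-manifold with perfect fundamental group; and (II) there is an aspherical closed $3$-manifold $W$ with $H_1(W;\mathbb{Z})\cong\mathbb{Z}$ and $\pi_1(W)'=\pi_1(W)''$. Granting these, the construction splits into three cases according to the value of $r$, noting that the hypothesis $n\geq\max\{r+2,3\}$ forces $0\leq r\leq n-2$: if $r=0$ I would take the manifold of (I) in dimension $n$, whose fundamental group is perfect; if $1\leq r\leq n-3$ I would take $T^r\times Q$, where $Q$ is the manifold of (I) of dimension $n-r\geq 3$; and if $r=n-2$ I would take $T^{n-3}\times W$ with $W$ as in (II). In each case the resulting closed $n$-manifold is aspherical (products of aspherical manifolds are aspherical), and a direct computation of the derived series of a product $\mathbb{Z}^a\times\Gamma$, with $\Gamma$ either perfect or satisfying $\Gamma'=\Gamma''$, yields $\pi/\pi'\cong\mathbb{Z}^r$ and $\pi'=\pi''$. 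This bookkeeping is routine.

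To establish (I): in dimension $3$ I would take a Brieskorn homology sphere $M(q,r,s)$ with $\tfrac1q+\tfrac1r+\tfrac1s<1$ as in Section \ref{examples}; in dimension $4$ an aspherical homology $4$-sphere of Ratcliffe--Tschantz \cite{RT05}; in dimension $5$ the manifold furnished by Theorem \ref{perfect5}; and for $n\geq 6$ I would write $n=3+(n-3)$ and take the product of $M(2,3,7)$ with a manifold of the same type in dimension $n-3$, using that products of aspherical manifolds are aspherical and products of perfect groups are perfect. (The role of Theorem \ref{perfect5} here is precisely to supply the value $n=5$, the unique integer $\geq 3$ not expressible as a sum of $3$'s and $4$'s.)

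To establish (II): I would let $W=M(K)$ be the closed orientable $3$-manifold obtained by $0$-framed surgery on a nontrivial knot $K\subset S^3$ with trivial Alexander polynomial $\Delta_K=1$, for instance the untwisted Whitehead double of the trefoil (cf. \cite{HiAIL}). Then $H_1(W;\mathbb{Z})\cong\mathbb{Z}$; since $K$ is nontrivial, $M(K)$ is irreducible by Gabai's Property~R theorem, hence $\pi_2(M(K))=0$ by the sphere theorem, and since $\pi_1(M(K))$ is infinite the manifold $M(K)$ is aspherical. As recalled in Section \ref{examples}, $\pi_1(W)$ is the quotient of the knot group $\pi{K}$ by the normal closure of the longitude, so $\pi_1(W)'$ and $\pi_1(W)''$ are the images in $\pi_1(W)$ of $(\pi{K})'$ and $(\pi{K})''$ respectively. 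The Alexander module $(\pi{K})'/(\pi{K})''$ admits a square presentation matrix over $\mathbb{Z}[t,t^{-1}]$ of determinant $\Delta_K$, a unit, so it vanishes; thus $(\pi{K})'=(\pi{K})''$ and therefore $\pi_1(W)'=\pi_1(W)''$.

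The two points above are where the content lies: the existence of an aspherical closed $5$-manifold with perfect fundamental group (this is Theorem \ref{perfect5}) and the construction of $W$. I expect the latter to be the main obstacle, since one must simultaneously arrange $H_1(W)\cong\mathbb{Z}$ (automatic for $0$-surgery on a knot), a perfect commutator subgroup (forced by the trivial Alexander polynomial), and asphericity (for which the irreducibility of $0$-surgery on a nontrivial knot is the essential input).
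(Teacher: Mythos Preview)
Your proposal is correct and follows essentially the same strategy as the paper's proof: build examples as products of $S^1$'s with aspherical closed manifolds having perfect fundamental group (your fact (I)) to cover all cases with $n\geq r+3$, and use $0$-surgery on a nontrivial knot with trivial Alexander polynomial crossed with a torus for the boundary case $n=r+2$ (your fact (II)). The only cosmetic differences are that the paper takes the Kinoshita--Terasaka knot $11_{n42}$ rather than a Whitehead double, and uses the $4$-manifold $P$ constructed in Theorem~\ref{perfect5} rather than citing a Ratcliffe--Tschantz homology $4$-sphere directly; both choices work equally well.
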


\begin{proof}
Let $\Sigma$ be an aspherical homology 3-sphere 
(such as the Brieskorn 3-manifold $\Sigma(2,3,7)$)
and let $P$ and $E$ be as in Theorem \ref{perfect5}.
Taking suitable products of copies of $\Sigma$, $P$, $E$  and $S^1$
with each other realizes all the possibilities with $n\geq{r+3}$,
for all $r\geq0$.

Let $M=M(K)$ be the 3-manifold obtained by $0$-framed surgery 
on a nontrivial prime knot $K$ with Alexander polynomial $\Delta(K)=1$ 
(such as the Kinoshita-Terasaka  knot $11_{n42}$).
Then $M$ is aspherical, since $K$ is nontrivial \cite{Ga87}, and
if $\mu=\pi_1(M)$ then $\mu/\mu'\cong\mathbb{Z}$ and $\mu'$ is perfect,
since $\Delta(K)=1$.
Hence products $M\times(S^1)^{r-1}$ give examples with $n=r+2$, for all $r\geq1$.
\end{proof}

In particular, the dimension hypotheses in Theorem 6.3 
of \cite{H-K-L} may be simplified,
so that it now asserts:

{\it
Let  $m\geq3$ and $r\geq0$.
Then there is an aspherical closed $(m+r)$-manifold $M$ with fundamental group 
$G=K\times\mathbb{Z}^r$, where $K=K'$.
If $m\not=4$  we may assume that $\chi(M)=0$, and if $r>0$ this must be so.}

This is best possible, as no $PD_1$- or $PD_2$-group is perfect,
and no perfect $PD_4$-group $H$ has $\chi(H)=0$.

As observed above, there are no known examples of dimension drop 1.
No $PD_n$-group with $n\leq3$ has such a dimension drop 
on any $p$-profinite completion.
(This is clear if $n\leq2$, 
and follows from Theorem \ref{cdleq2 implies free} if $n=3$.)
Hence we may focus on the first undecided case, $n=4$. 

In seeking possible examples of dimension drop 1 
in the pro-$p$ completion of a $PD_n$-group,
the most convenient candidates are groups whose lower central series terminates after finitely many steps.
A finitely generated nilpotent group $\nu$ of Hirsch length $h$ 
has a maximal finite normal subgroup $T(\nu)$, 
with quotient a $PD_h$-group.
Moreover, $\nu/T(\nu)$ has  nilpotency class $<h$,
and is residually a finite $p$-group for all $p$,
by Theorem 4 of \cite[Chapter 1]{Se}.
Thus the pro-$p$ completion of $\nu$ is a pro-$p$ $PD_h$-group.
for all $p$ prime to the order of $T(\nu)$.

If $\gamma_kG/\gamma_{k+1}G$ is finite, of exponent $e$, say,
then so are all subsequent subquotients of the lower central series,
by Proposition 11 of Chapter 1 of \cite{Se}.
Thus if $G$ is a $PD_4$-group such that $G/\gamma_3G$ 
has Hirsch length 3
and $\gamma_3G/\gamma_4G$ is finite then, 
setting $\nu=G/\gamma_3G$,
the canonical projection to $\nu/T(\nu)$ induces isomorphisms on pro-$p$ completions, 
for almost all primes $p$.
Taking products of one such group with copies of $\mathbb{Z}$
would give similar examples with dimension drop 1 in all higher dimensions.

Let $G$ be the fundamental group of a closed orientable 4-manifold
which is the total space of a bundle with base and fibre aspherical closed orientable surfaces.
Thus there is an epimorphism $f:G\to{C}$ with kernel $A$, 
where $A$ and $C$ are orientable $PD_2$-groups.
The projection $f$ induces an epimorphism
$\hat{f}:\widehat{G}_p\to\widehat{C}_p$ of pro-$p$ completions.
Let $K$ be the kernel of the canonical homomorphism from $G$ to 
$\widehat{G}_p$.
The kernel of $\widehat{f}$ is the closure of the image of $A$,
and so is topologically finitely generated.
If $\widehat{G}_p$ is a pro-$p$ $PD_3$-group then
$\mathrm{Ker}(\widehat{f} )\cong\mathbb{Z}_p$ \cite[Cor. 4]{Desi-Aline}.
Hence $A/K$ is finitely generated and abelian of rank 1.
An immediate consequence is that 
$\beta_1(G;\mathbb{F}_p)=\beta_1(C;\mathbb{F}_p)$
or $\beta_1(C;\mathbb{F}_p)+1$.
This condition is not satisfied by most such surface bundle groups $G$,
as $\beta_1(G;\mathbb{F}_p)$ may be as large as
$\beta_1(A;\mathbb{F}_p)+\beta_1(C;\mathbb{F}_p)$.
There are no such bundles with base or fibre the torus,
by Lemma \ref{chi>1}.

We make one further observation, related to Lemma \ref{chi>1}.
If $G$ is an orientable $PD_4$-group and $\widehat{G}_p$ is a pro-$p$ Poincaré duality group  of dimension 3 then the canonical homomorphism from
$H^3(\widehat{G}_p;\mathbb{F}_p)$ to $H^3(G;\mathbb{F}_p)$ is trivial.
For $H^1(\widehat{G}_p;\mathbb{F}_p)\not=0$
and so there are classes 
$\alpha\in{H^1(\widehat{G}_p;\mathbb{F}_p)}=H^1(F;\mathbb{F}_p)$
and $\beta\in{H^2}(\widehat{G}_p;\mathbb{F}_p)<H^2(G;\mathbb{F}_p)$
such that $\alpha\cup\beta$ generates $H^3(\widehat{G}_p;\mathbb{F}_p)$,
by Poincaré duality for $\widehat{G}_p$.
If this has nonzero image in $H^3(G;\mathbb{F}_p)$ then there is a
$\gamma\in{H^1(\widehat{G}_p;\mathbb{F}_p)}=H^1(F;\mathbb{F}_p)$
such that $\alpha\cup\beta\cup\gamma\not=0$ in $H^4(G;\mathbb{F}_p)$.
But this cup product is in the image of $H^4(\widehat{G}_p;\mathbb{F}_p)$,
which is 0.
An equivalent formulation of this condition is that
inflation from $H^3(G/U;\mathbb{F}_p)$ to $H^3(G;\mathbb{F}_p)$ is
trivial for every normal subgroup $U$ of $p$-power index in $G$.
In particular, taking $U=G'X^p(G)$ (where $X^p(G)$ is the verbal subgroup generated by all $p$th powers) we see that the image of 
$\wedge^3H^1(G;\mathbb{F}_p)$ in $H^3(G;\mathbb{F}_p)$ must be 0.

\end{document}